\title[Almost periodic solution for NLS]{On the existence of full dimensional KAM torus for nonlinear Schr\"odinger equation }
\thanks{The first author is supported by the National Natural Science Foundation of China (No. 11671066). The third author is supported by
China Postdoctoral Science Foundation Grant (No. 2018M641050).}
\author{Hongzi Cong}
\address[Hongzi Cong]{School of Mathematical Sciences,
Dalian University of Technology, Dalian, Liaoning 116024, China} \email{conghongzi@dlut.edu.cn}
\author{Lufang Mi}
\address[Lufang Mi]{College of Science,  The Institute of Aeronautical Engineering and Technology,
Binzhou University,
Binzhou 256600,
 China} \email{milufang@126.com}
\author{Yunfeng Shi}
\address[Yunfeng Shi]{School of Mathematical Sciences,
Peking University,
Beijing 100871,
P. R. China} \email{yunfengshi18@gmail.com}
\author{Yuan Wu}
\address[Yuan Wu]{School of Mathematical Sciences,
Fudan University,
Shanghai 200433,
P. R. China} \email{14110840003@fudan.edu.cn}
\keywords{KAM theory, almost periodic solution, Gevrey space, Nonlinear Schr\"odinger equation.}
\theoremstyle{plain}
\newtheorem{thm}{Theorem}[section]
\newtheorem{lem}[thm]{Lemma}
 \theoremstyle{definition}
\newtheorem{defn}[thm]{Definition}
 \theoremstyle{remark}
 \newtheorem{rem}[thm]{Remark}
 \numberwithin{equation}{section}
\begin{document}


\begin{abstract}
In this paper, we study the  following nonlinear Schr\"odinger equation
\begin{eqnarray}\label{maineq0}
\textbf{i}u_{t}-u_{xx}+V*u+\epsilon f(x)|u|^4u=0,\ x\in\mathbb{T}=\mathbb{R}/2\pi\mathbb{Z},
\end{eqnarray}
where $V*$ is the Fourier multiplier defined by $\widehat{(V* u})_n=V_{n}\widehat{u}_n, V_n\in[-1,1]$ and $f(x)$ is Gevrey smooth.
It is shown that for $0\leq|\epsilon|\ll1$, there is some $(V_n)_{n\in\mathbb{Z}}$ such that,  the equation (\ref{maineq0}) admits a time almost periodic solution (i.e., full dimensional KAM torus) in the Gevrey space. This extends results  of Bourgain \cite{BJFA2005}  and Cong-Liu-Shi-Yuan \cite{CLSY} to the case that the nonlinear perturbation depends explicitly on the space variable $x$. The main difficulty here is the absence of zero momentum of the equation.
\end{abstract}

\maketitle
\section{Introduction and main result}
In this paper, we focus on  the nonlinear Schr\"odinger equation (NLS) with periodic boundary conditions
\begin{equation}\label{maineq}
\sqrt{-1}u_{t}- u_{xx}+V*u+\epsilon f(x)|u|^4u=0,\  x\in\mathbb{T},
\end{equation}
where $\textbf{i}=\sqrt{-1}$, $V*$ is a Fourier multiplier defined by
\begin{equation}
V*u=\sum_{n\in\mathbb{Z}}V_n\widehat{u}_ne^{\textbf{i}nx},\  V_n\in[-1,1],
\end{equation} $f(x)$ is $2\pi$-periodic  and real analytic in $x,y$.
Written in Fourier modes $ (q_{n})_{n\in \mathbb{Z}}$,\ then (\ref{maineq}) can be rewritten as
\begin{eqnarray*}
\label{002} \dot{q}_{n}= \mathbf{i}\frac{\partial H}{\partial \overline{q}_{n}}
\end{eqnarray*}
with the Hamiltonian
\begin{equation}
\label{003}
H(q,\overline{q})=\sum_{n\in\mathbb{Z}}(n^{2}+V_{n})|q_{n}|^{2}
+\epsilon\sum_{n\in\mathbb{Z}}\underset{n_{1}-n_{2}+n_{3}-n_{4}+n_{5}-n_{6}=-n}{\sum}
\widehat{f}(n)q_{n_{1}}\overline{q}_{n_{2}}q_{n_{3}}\overline{q}_{n_{4}}q_{n_{5}}\overline{q}_{n_{6}}.
\end{equation}
Our aim is to show the existence of  almost periodic solutions for
such a family of NLS.

In the last few decades, the persistence of the invariant tori for NLS has been drawn a lot of attentions by many authors.  To  this end, one considers the infinite dimensional Hamiltonian of the form
$$H=N+\epsilon P(\theta,I,z,\bar{z}), $$
with the symplectic structure $\mathrm{d} \theta\wedge\mathrm{d} I+\sqrt{-1}\mathrm{d}z\wedge\mathrm{d}\bar{z}$
 on $\mathbb{T}^d\times\mathbb{R}^{d}\times\mathcal{H}\times\mathcal{H}\ni(\theta,I,z,\bar{z})$ and
$$N=\sum_{i=1}^{d}\omega_iI_i+\sum_{j\geq1}\Omega_{j}z_j\bar{z}_{j},\qquad  1\leq d<\infty,$$
where $\omega=(\omega_1,\omega_2,\cdots,\omega_d)$ is called tangent frequency vector, $(\Omega_j)_{j\geq1}$ is called the normal frequency vector, and $P=P(\theta,I,z,\bar{z})$ is a perturbation. The unperturbed Hamiltonian $N$ has a special invariant torus
$$\mathcal{T}_0=\mathbb{T}^d\times\{I=0\}\times\{z=0\}\times\{\bar{z}=0\},$$
and all solutions starting on $\mathcal{T}_0$ are quasi-periodic with the frequency $\omega$.
Under suitable assumptions on $N$ and $P$, it can be proved that for ``most'' frequency $\omega$, the tori $\mathcal{T}_0$ can be persisted for some small perturbation $\epsilon P$ (see \cite{K1987,KB2000,W1990} for example).
 However, the KAM theorem of this type depends heavily on the fact that the spatial dimension of the PDEs equals to $1$. Bourgain \cite{BA1998,BB2005} developed a new method initiated by Craig-Wayne \cite{CW1993} to deal with the KAM tori for the PDEs in high spatial dimension, based on the Newton iteration, Fr\"{o}hlich-Spencer techniques, Harmonic analysis and semi-algebraic set theory (see \cite{BB2005}). This method is now called C-W-B method. We also mention Eliasson-Kuksin \cite{EK2010} where the KAM theorem is extended  to deal with higher spatial dimensional nonlinear Schr\"{o}dinger equation. In addition, the classical KAM theory is also developed to deal  some 1D PDEs of unbounded perturbation. See, for example, \cite{KB2000,KPB2003,LY2011,ZGY2011,BBM2014,BBM2016,FP2015} for the details. In the all above works, the obtained KAM tori are lower (finite) dimension. 
Naturally, the following problem is interesting:
 \vskip8pt
 {\it Can the full dimensional invariant tori be expected with a suitable decay, for example, $I_n\sim|n|^{-S}$ with some $ S>0$ as $|n|\rightarrow+\infty$ ? }
\vskip8pt
  The existence of the  full dimensional KAM tori  with polynomial decay rate $I_n\sim|n|^{-S}$ is still open up to now. See \cite{KZ2004} for the details.
   The first result about the existence of the full dimensional tori (or almost periodic solutions) for Hamiltonian PDEs was obtained by Bourgain \cite{B1996}.
   Precisely, using C-W-B method the almost periodic solutions (in time) of the form
   \begin{equation}
   u(t,x)=\sum_{n\geq 1}a_n\cos  \omega_n t\ \phi_n(x)
   \end{equation}
   were constructed for 1D nonlinear wave equation (NLW)
   \begin{equation}
   u_{tt}-u_{xx}+V(x)u+\epsilon f(u)=0
   \end{equation} under Dirichlet boundary conditions, where $\omega_n\approx \sqrt{\lambda_n}$ and $\lambda_n$ is the Dirichlet spectrum of $-\partial_{xx}+V(x)$.
   Here, a strong decay assumption $|a_n|\rightarrow 0$ is needed for the amplitude $a_n$.
    P\"oschel  \cite{P2002} proved the existence of almost periodic solutions for NLS equation by the KAM method (also see {\cite{G2012},\cite{GX2013},\cite{NG2007},\cite{WG2011}}). The basic idea in these papers is to use repeatedly (infinitely many times) the KAM theorem dealing with lower dimensional KAM tori. That is why the amplitude (or action) of those almost periodic solutions decay extremely fast. In fact, the decay rate is defined implicitly and much more fast than $a_n\sim e^{-|n|^{C}}, C>1$.  See more comments in \cite{BZ2004}. Recently, the invariant tori of full dimensions for second KdV equations with the external parameters were constructed by Geng-Hong \cite{GH2017}, where noting that the nonlinear term contains the derivatives.

 Another way is due to Bourgain in \cite{BJFA2005}  where 1D NLS with periodic boundary condition
 was investigated (see also \cite{P1990} by P\"{o}schel where infinite dimensional Hamiltonian systems with short range was considered). It was shown in \cite{BJFA2005} that 1D NLS has a full dimensional KAM torus of prescribed frequencies  where the actions of the torus obey the estimates \begin{equation}\label{083101}\frac{1}{2}e^{-r{|n|^{\theta}}}\leq I_n\leq 2e^{-r{|n|^{\theta}}}
\end{equation}
with $\theta=\frac{1}{2}$.
This is up to now only existence result about the full dimensional KAM tori with a slower decay rate than $I_n\sim e^{-|n|^{S}}, S>1$. In a different way, Bourgain constructed the full dimensional tori directly, where a more complicated small divisor problem has to be dealt with. An important observation by Bourgain is the following:
Let $(n_i)$ be a finite set of modes satisfy $|n_1|\geq |n_2|\geq \cdots$ and
\begin{equation}\label{M}
n_1-n_2+n_3-\cdots=0.
\end{equation}
In the case of a `near' resonance, there is also a relation
\begin{equation}\label{M'}
n^2_1
-n^2_2
+n^2_3
-\cdots=o(1).
\end{equation}
Unless $n_1=n_2$, one may then control $|n_1|+|n_2|$ from (\ref{M}), (\ref{M'}) by
$\sum_{j\geq 3}
|n_j|.$ More recently, Cong-Liu-Shi-Yuan \cite{CLSY} extended Bourgain's results to the any $\theta\in (0,1)$.

Note that the condition (\ref{M}) is no longer valid for the Hamiltonian (\ref{003}). But if the function $f(x)$ is Gevrey smooth with $\mu>0$, then one has
\begin{align}\label{M''}
|\widehat{f}(n)|\leq Ce^{-\mu|n|^\theta},\ \mu>0,\ \theta\in (0,1).
\end{align}
Thus we use the property (\ref{M''}) to guarantee $|n_1|+|n_2|$ can be controlled by $\sum_{j\geq 3}|n_j|+|n|$.

To state our result precisely, we will give some definitions firstly.
\begin{defn}\label{007}
 Given $ 0 < \theta < 1$ and $ r > 0 $,\ we define the Banach space $ \mathfrak{H}_{r,\infty}$  consisting of all complex sequences $ q = (q_{n})_{n\in\mathbb{Z}}$ with
\begin{eqnarray}
\label{H0} \|q \|_{r,\infty}= \sup_{n\in \mathbb{Z}}|q_{n}|e^{r|n|^{\theta}} < \infty.
\end{eqnarray}
\end{defn}
\begin{defn}
Denote $ \|x\| = dist (x,\mathbb{Z})$. A vector $\omega=(\omega_n)_{n\in\mathbb{Z}}$ is called to be Diophantine if there exists a real number $ \gamma > 0 $ such that the following resonance issues
\begin{eqnarray}
\label{005} \left\| \sum_{n\in \mathbb{Z}}l_{n}\omega_{n}\right\|\geq \gamma \prod_{n\in \mathbb{Z}}\frac{1}{1+l^{2}_{n}|n|^{4}}
\end{eqnarray}
hold, where $ 0 \neq l = (l_{n})_{n\in \mathbb{Z}}$ is a finitely supported sequence of integers and
\[
|n|= \max\{ 1,n,-n \}.
\]
\end{defn}
\begin{thm}\label{Thm}
Given $r>0$, $ 0 < \theta < 1$ and a Diophantine vector $ \omega = (\omega_{n})_{n\in\mathbb{Z}}$ satisfying $
\sup_{n}|\omega_{n}| < 1$,\ then for any $\mu>2r$, sufficiently small $ \epsilon > 0 $ and some appropriate $V$, (\ref{maineq}) has a full dimensional invariant torus $ \mathcal{E}$ with amplitude in $ \mathfrak{H}_{r,\infty}$ satisfying:\\
\begin{itemize}
\item[(1)] the amplitude of $ \mathcal{E}$ is restricted as
\begin{equation*}
\frac12e^{-r|n|^{\theta}}\leq |q_n|\leq 2e^{-r|n|^{\theta}};
\end{equation*}
\item[(2)] the frequency on  $ \mathcal{E}$ was prescribed to be $(n^2+\omega_n)_{n\in\mathbb{Z}}$;\\
\item[(3)] the invariant torus  $ \mathcal{E}$ is linearly stable.
\end{itemize}
\end{thm}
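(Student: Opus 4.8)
\emph{Strategy.} The plan is to follow the direct KAM construction of full dimensional tori of \cite{BJFA2005} and \cite{CLSY}, building into it the Gevrey decay \eqref{M''} of $\widehat{f}$. First I would fix a nominal amplitude $I_n^{\ast}$ of size $e^{-2r|n|^{\theta}}$ and work on the Banach space of Hamiltonians written as series $P=\sum_{a,k,\bar k}P_{a,k,\bar k}\prod_n I_n^{a_n}q_n^{k_n}\bar q_n^{\bar k_n}$ with $I_n=q_n\bar q_n$, equipped with the weighted norm $\|\cdot\|_{\rho}$ of \cite{CLSY}: each monomial is measured essentially by a Gevrey weight $\exp\!\big(\rho\sum_n(2a_n+k_n+\bar k_n)|n|^{\theta}\big)$, the norm being arranged so as to be finite on $\epsilon P$ and to behave well under the scheme, the key mechanism being Bourgain's observation that in a near-resonant monomial the two largest oscillating modes are controlled by the remaining data. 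The one change forced by the absence of momentum conservation is that this ``remaining data'' now includes the index of $\widehat{f}$: for a near-resonant monomial with oscillating modes $\{n_j\}$, $|n_1|\ge|n_2|\ge\cdots$, whose two largest modes are not paired into a factor $I_{n_1}$, the relation $\sum_j(\pm)n_j=-n$ coming from \eqref{003} together with the near-resonance bound on $\big|\sum_j(\pm)n_j^2\big|$ (bounded in terms of the degree) forces $|n_1|+|n_2|\le C\big(|n|+\sum_{j\ge3}|n_j|\big)$, exactly as in \cite{BJFA2005} but with the extra term $|n|$, while the corresponding coefficient carries the factor $e^{-\mu|n|^{\theta}}$ from \eqref{M''}. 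Since these two leading modes would otherwise cost a Gevrey weight of order $e^{2\rho|n|^{\theta}}$ with $\rho$ slightly larger than $r$, the hypothesis $\mu>2r$ is precisely what makes $e^{2\rho|n|^{\theta}}e^{-\mu|n|^{\theta}}$ bounded. With this in hand one verifies the usual calculus on $\|\cdot\|_{\rho}$: Poisson bracket bounded with a small loss in $\rho$ (momenta add and $|a+b|^{\theta}\le|a|^{\theta}+|b|^{\theta}$, so the $\widehat{f}$-decay propagates), time-one flows of small Hamiltonians preserving the space, and $\epsilon P$ with $P$ the degree-six part of \eqref{003} having finite norm.

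\emph{The iteration.} At step $s$ the Hamiltonian is $H_s=N_s+P_s$ with $N_s=\sum_n(n^2+\omega_n)I_n+(\text{a series in the }I_n\text{ alone})$, $\|P_s\|_{\rho_s}\le\varepsilon_s$, a potential $V^{(s)}$ carried along. I would truncate $P_s$ (the discarded tail is super-exponentially small in the weighted norm), move its action-only (resonant) part into $N_{s+1}$ and the counterterm, and remove the rest by the time-one map $\Phi_s$ of $F_s$ solving $\{N_s,F_s\}+P_s^{\mathrm{nr}}=0$, i.e.
\[
(F_s)_{a,k,\bar k}=-\,\frac{(P_s^{\mathrm{nr}})_{a,k,\bar k}}{\mathbf i\sum_n(k_n-\bar k_n)(n^2+\omega_n)}.
\]
The divisor never vanishes on the non-resonant part; since $\sum_n(k_n-\bar k_n)n^2\in\mathbb{Z}$, the Diophantine condition \eqref{005} gives $\big|\sum_n(k_n-\bar k_n)(n^2+\omega_n)\big|\ge\gamma\prod_n\big(1+(k_n-\bar k_n)^2|n|^4\big)^{-1}$, and this polynomial product is dominated by the exponential gain $\rho_s-\rho_{s+1}$ of the weight. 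Estimating $\{F_s,P_s\}$ and the higher brackets in $H_s\circ\Phi_s$ yields $\|P_{s+1}\|_{\rho_{s+1}}\le C\varepsilon_s^{2}$; taking $\rho_s-\rho_{s+1}=2^{-s}\delta_0$ with $\sum_s 2^{-s}\delta_0$ small enough that $\rho_s\downarrow\rho_{\infty}\in(r,\mu/2)$ keeps the ``$\mu$-budget'' $\mu-2\rho_s$ bounded away from zero at every step. Finally the resonant part shifts the normal-form frequency; following the modifying-term method one solves a fixed point for the initial potential $V$ (the accumulated shift is $O(\epsilon)$ and depends on $V$ through the whole iteration) so that these shifts cancel, $V^{(s)}\to V^{(\infty)}\in[-1,1]$ using $\sup_n|\omega_n|<1$ and $0<\epsilon\ll1$, and the normal-form frequency stays equal to $n^2+\omega_n$ throughout.

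\emph{Conclusion.} For $\epsilon$ small, $\varepsilon_s\to0$ super-exponentially, so $\Phi:=\lim_s\Phi_1\circ\cdots\circ\Phi_s$ converges to a symplectic map $O(\epsilon)$-close to the identity on a neighbourhood of $\{|q_n|^2=I_n^{\ast}\}$ in $\mathfrak{H}_{r,\infty}$ (here the margin $\rho_{\infty}>r$ is used), and $H\circ\Phi=N_{\infty}$ is a function of the $I_n$ only. Thus $\mathcal{E}:=\Phi(\{|q_n|^2=I_n^{\ast}\})$ is an invariant torus; its frequency $\nabla_IN_{\infty}(I^{\ast})$ equals $(n^2+\omega_n)_n$ by the counterterm choice of $V$; it is linearly stable since $N_{\infty}$ is diagonal in $(q_n,\bar q_n)$, so the variational equation in the normal directions is $\dot z_n=\mathbf i\,\Omega_n z_n$ with real $\Omega_n$; and since $\Phi$ is close to the identity the amplitude satisfies $\tfrac12e^{-r|n|^{\theta}}\le|q_n|\le2e^{-r|n|^{\theta}}$.

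\emph{Main obstacle.} The difficulty, and the only genuine departure from \cite{BJFA2005,CLSY}, is this momentum bookkeeping: one must re-prove Bourgain's combinatorial observation with the index $n$ of $\widehat{f}$ present (using that in a near-resonance $\big|\sum_j(\pm)n_j^2\big|$ is bounded in terms of the degree while $\sum_j(\pm)n_j=-n$), and then check that the resulting loss -- an extra Gevrey weight of order $e^{2\rho|n|^{\theta}}$ per such monomial -- is absorbed, once and for all, by the decay $e^{-\mu|n|^{\theta}}$ of $\widehat{f}(n)$; that is, that the margin $\mu>2r$ survives the infinitely many geometric losses $\rho_s-\rho_{s+1}$ together with the Diophantine denominators. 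Closing this balance is the technical heart of the proof; the remainder is a lengthy but routine adaptation of \cite{BJFA2005,CLSY}.
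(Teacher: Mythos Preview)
Your identification of the main new difficulty and its resolution is correct: the lack of momentum conservation is handled by building the Gevrey decay $|\widehat f(n)|\le Ce^{-\mu|n|^\theta}$ into the norm, and the combinatorial observation that in a near--resonance the two largest modes are controlled by $\sum_{j\ge3}|n_j|+|n|$ is exactly what the paper uses (Lemmas~\ref{H1} and~\ref{a1}). The condition $\mu>2r$ plays the role you describe.

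However, your iteration does not close as written. You put \emph{all} action--only terms into $N_{s}$, so that $N_s=\sum_n(n^2+\omega_n)I_n+Q_s(I)$ with $\|Q_s\|\sim\epsilon_0$ for every $s\ge1$; yet your formula $(F_s)_{a,k,\bar k}=-(P_s^{\mathrm{nr}})_{a,k,\bar k}/\big[\mathbf i\sum_n(k_n-\bar k_n)(n^2+\omega_n)\big]$ solves only $\{N_0,F_s\}+P_s^{\mathrm{nr}}=0$, not $\{N_s,F_s\}+P_s^{\mathrm{nr}}=0$. The residual $\{Q_s,F_s\}$ is of size $\epsilon_0\varepsilon_s$, so the recursion is $\varepsilon_{s+1}\lesssim e^{C\delta_s^{-10/\theta}}(\epsilon_0\varepsilon_s+\varepsilon_s^2)$, i.e.\ only \emph{geometric} decay $\varepsilon_s\sim\epsilon_0^{s+1}$ rather than the super--exponential decay you claim. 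Since $\delta_s\to0$ while the small--divisor loss is $e^{C\delta_s^{-10/\theta}}$, the product $e^{C\delta_s^{-10/\theta}}\epsilon_0$ eventually exceeds~$1$ and the scheme diverges. Consequently the conclusion ``$H\circ\Phi=N_\infty$ is a function of the $I_n$ only'' is too strong and is not what the method delivers.

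The paper (following \cite{BJFA2005,CLSY}) avoids this by a different splitting: one expands each monomial in powers of $J_n=I_n-I_n(0)$ and writes $R=R_0+R_1+R_2$ according to the $J$--order (see \eqref{N1}). Only $R_0$ and $R_1$ are removed by the homological equation; their resonant parts $[R_0]$, $[R_1]$ are a constant and a term \emph{linear} in $J$, so the normal form stays of the form $\sum_n(n^2+\widetilde V_{n,s})I_n$ and the divisor formula you wrote is exact. This gives genuine super--exponential decay $\|R_{0,s}\|\le\epsilon_0^{(3/2)^s}$, $\|R_{1,s}\|\le\epsilon_0^{0.6(3/2)^s}$, which absorbs the losses $e^{C\delta_s^{-10/\theta}}$ with $\delta_s=\rho/s^2$. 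The price is that $R_2$ is never eliminated: it stays uniformly bounded by $(1+d_s)\epsilon_0$ (see \eqref{093}, \eqref{0863}), and the limit Hamiltonian is $H_*=N_*+R_{2,*}$ with $R_{2,*}=O(J^2)$ (see \eqref{123}--\eqref{125}), not a function of actions alone. Invariance and linear stability of the torus $\{J=0\}$ then follow because $\nabla R_{2,*}$ vanishes there, not because the transformed Hamiltonian is integrable. This $R_0/R_1/R_2$ bookkeeping is not a cosmetic detail of \cite{BJFA2005,CLSY}; it is precisely what prevents the $\epsilon_0\varepsilon_s$ term that breaks your scheme.
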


\section{KAM Iteration}
\subsection{Some notations and the norm of the Hamiltonian}
 Let $q=(q_n)_{n\in\mathbb{Z}}$ and its complex conjugate $\bar {q}=(\bar q_n)_{n\in\mathbb{Z}}$. Introduce $I_n=|q_n|^2$ and $J_n=I_n-I_n(0)$ as notations but not as new variables, where $I_n(0)$ will be considered as the initial data. Then the  Hamiltonian (\ref{maineq}) has the form of
\begin{equation*}
H(q,\bar q)=N(q,\bar q)+R(q,\bar q),
\end{equation*}
where
\begin{equation*}
N(q,\bar q)=\sum_{n\in\mathbb{Z}}(n^2+V_n)|q_n|^2,
\end{equation*}
 \begin{equation*}
 R(q,\bar q)=\sum_{a,k,k'\in{\mathbb{N}^{\mathbb{Z}}}}B_{akk'}\mathcal{M}_{akk'}
 \end{equation*}
with
\begin{eqnarray}\nonumber
\mathcal{M}_{akk'}=\prod_{n\in\mathbb{Z}}I_n(0)^{a_n}q_n^{k_n}\bar q_n^{k_n'},
\end{eqnarray}
and $B_{akk'}$ are the coefficients.

Define by
\begin{eqnarray}\label{008}
\mbox{supp}\ \mathcal{M}_{akk'}=\{n:2a_n+k_n+k_n'\neq 0\},
\end{eqnarray}
and define the momentum of $\mathcal{M}_{akk'}$ by
\begin{eqnarray}\label{010}
\mbox{momentum}\ \mathcal{M}_{akk'}:=m(k,k')=\sum_{n\in\mathbb{Z}} (k_n-k_n')n.
\end{eqnarray}
Moreover,
denote by

\[
 n^{\ast}_{1}= \max\{|n|: a_{n}+k_{n}+k'_{n} \neq 0\},
\]
and
\[
m^*(k,k')=\left|m(k,k')\right|.
\]
Now we define the norm of the Hamiltonian as follows
\begin{defn}
For any given $ \rho > 0,\mu > 0$ and $0<\theta<1$, define the norm of the Hamiltonian
$R$ by
\begin{eqnarray}\label{H00}
\| R\|_{\rho,\mu}= \sup_{a,k,k'\in\mathbb{N}^{\mathbb{Z}}}\frac{\left|B_{akk'}\right|}{e^{\rho\sum_{n}(2a_{n}+k_{n}+k_{n}')|n|^{\theta}
-2\rho(n_1^\ast)^{\theta}-\mu m^\ast(k,k')^{\theta}}}.
\end{eqnarray}

\end{defn}

\subsection{Derivation of homological equations}
The proof of Theorem \ref{Thm} employs the rapidly
converging iteration scheme of Newton type to deal with small divisor problems
introduced by Kolmogorov, involving the infinite sequence of coordinate transformations.
At the $s$-th step of the scheme, a Hamiltonian
$H_{s} = N_{s} + R_{s}$
is considered, as a small perturbation of some normal form $N_{s}$. A transformation $\Phi_{s}$ is
set up so that
$$ H_{s}\circ \Phi_{s} = N_{s+1} + R_{s+1}$$
with another normal form $N_{s+1}$ and a much smaller perturbation $R_{s+1}$. We drop the index $s$ of $H_{s}, N_{s}, R_{s}, \Phi_{s}$ and shorten the index $s+1$ as $+$.

Rewrite $R$ as
\begin{equation}\label{N1}
R=R_0+R_1+R_2
\end{equation}
where
\begin{eqnarray*}
{R}_0&=&\sum_{a,k,k'\in{\mathbb{N}^{\mathbb{Z}}}\atop\mbox{supp}\ k\bigcap \mbox{supp}\ k'=\emptyset}B_{akk'}\mathcal{M}_{akk'},\\
{R}_1&=&\sum_{m\in\mathbb{Z}}J_m\left(\sum_{a,k,k'\in{\mathbb{N}^{\mathbb{Z}}}\atop\mbox{supp}\ k\bigcap \mbox{supp}\ k'=\emptyset}B_{akk'}^{(m)}\mathcal{M}_{akk'}\right),\\
{R}_2&=&\sum_{m_1,m_2\in\mathbb{Z}}J_{m_1}J_{m_2}\left(\sum_{a,k,k'\in{\mathbb{N}^{\mathbb{Z}}}\atop\mbox{no assumption}}B_{akk'}^{(m_1,m_2)}\mathcal{M}_{akk'}\right).
\end{eqnarray*}

We desire to eliminate the terms $R_0,R_1$ in (\ref{N1}) by the coordinate transformation $\Phi$, which is obtained as the time-1 map $X_F^{t}|_{t=1}$ of a Hamiltonian
vector field $X_F$ with $F=F_0+F_1$. Let ${F}_{0}$ (resp. ${F}_{1}$) has the form of ${R}_0$ (resp. ${R}_{1}$),
that is \begin{eqnarray}
\label{039}&&{F}_0=\sum_{a,k,k'\in{\mathbb{N}^{\mathbb{Z}}}\atop\mbox{supp}\ k\bigcap \mbox{supp}\ k'=\emptyset}F_{akk'}\mathcal{M}_{akk'},\\
\label{040}&&{F}_1=\sum_{m\in\mathbb{Z}}J_m\left(\sum_{a,k,k'\in{\mathbb{N}^{\mathbb{Z}}}\atop\mbox{supp}\ k\bigcap \mbox{supp}\ k'=\emptyset}F_{akk'}^{(m)}\mathcal{M}_{akk'}\right),
\end{eqnarray}
and the homological equations become
\begin{equation}\label{041}
\{N,{F}\}+R_0+R_{1}=[R_0]+[R_1],
\end{equation}
where
\begin{equation}\label{042}
[R_0]=\sum_{a\in\mathbb{N}^{\mathbb{Z}}}B_{a00}\mathcal{M}_{a00},
\end{equation}
and
\begin{equation}\label{043}
[R_1]=\sum_{m\in\mathbb{Z}}J_m\sum_{a\in\mathbb{N}^{\mathbb{Z}}}B_{a00}^{(m)}\mathcal{M}_{a00}.
\end{equation}
The solutions of the homological equations (\ref{041}) are given by
\begin{equation}\label{044}
F_{akk'}=\frac{B_{akk'}}{\sum_{n\in\mathbb{Z}}(k_n-k^{'}_n)(n^2+\widetilde{V}_n)},
\end{equation}
and
\begin{equation}\label{045}
F_{akk'}^{(m)}=\frac{B_{akk'}^{(m)}}{\sum_{n\in\mathbb{Z}}(k_n-k^{'}_n)(n^2+\widetilde{V}_n)}.
\end{equation}
The new Hamiltonian ${H}_{+}$ has the form
\begin{eqnarray}
H_{+}\nonumber&=&H\circ\Phi\\
&=&\nonumber N+\{N,F\}+R_0+R_1\\
&&\nonumber+\int_{0}^1\{(1-t)\{N,F\}+R_0+R_1,F\}\circ X_F^{t}\ \mathrm{d}{t}
+\nonumber R_2\circ X_F^1\\
&=&\label{046}N_++R_+,
\end{eqnarray}
where
\begin{equation}\label{047}
N_+=N+[R_0]+[R_1],
\end{equation}
and
\begin{equation}\label{048}
R_+=\int_{0}^1\{(1-t)\{N,F\}+R_0+R_1,F\}\circ X_F^{t}\ \mathrm{d} t+R_2\circ X_F^1.
\end{equation}
\subsection{The solvability of the homological equations (\ref{041})}In this subsection, we will estimate
the solutions of the homological equations (\ref{041}). To this end, we define the new norm for the Hamiltonian ${R}$ of the form as follows:
\begin{eqnarray}\label{049}
||{R}||_{\rho,\mu}^{+}=\max\{||R_0||_{\rho,\mu}^{+},||R_1||_{\rho,\mu}^{+}|,||R_2||_{\rho,\mu}^{+}\},
\end{eqnarray}
where
\begin{eqnarray}
\label{050}&&||R_0||_{\rho,\mu}^{+}=\sup_{a,k,k'\in\mathbb{N}^{\mathbb{Z}}}\frac{\left|B_{akk'}\right|}{e^{\rho(\sum_{n}(2a_n+k_n+k_n')|n|^{\theta}-2(n_1^*)^{\theta})-\mu m^*(k,k')^{\theta}}},\\
\label{051}&&||R_1||_{\rho,\mu}^{+}=\sup_{a,k,k'\in\mathbb{N}^{\mathbb{Z}}\atop m\in\mathbb{Z}}\frac{\left|B^{(m)}_{akk'}\right|}{e^{\rho(\sum_{n}(2a_n+k_n+k_n')|n|^{\theta}+2|m|^{\theta}-2(n_1^*)^{\theta})
-\mu m^\ast(k,k')^{\theta}}},\\
\label{052}&&||R_2||_{\rho,\mu}^{+}=\sup_{a,k,k'\in\mathbb{N}^{\mathbb{Z}}\atop
m_1,m_2\in\mathbb{Z}}\frac{\left|B^{(m_1,m_2)}_{akk'}\right|}{e^{\rho(\sum_{n}(2a_n+k_n+k_n')|n|^{\theta}
+2|m_1|^{\theta}+2|m_2|^{\theta}-2(n_1^*)^{\theta})-\mu m^\ast(k,k')^{\theta}}}.
\end{eqnarray}
Moreover, one has the following estimates:
\begin{lem}\label{N2}
Given any $ \mu>\delta>0,\rho>0$, one has
\begin{equation}\label{053}
||R||_{\rho+\delta,\mu-\delta}^{+}\leq\left(\frac{1}{\delta}\right)^{ C(\theta)\delta^{-\frac{1}{\theta}}}||R||_{\rho,\mu}
\end{equation}
and
\begin{equation}\label{054}
||R||_{\rho+\delta,\mu-\delta}\leq\frac{C(\theta)}{\delta^2}||R||_{\rho,\mu}^{+},
\end{equation}
where $C(\theta)$ is a  positive constant depending on $\theta$ only.
\end{lem}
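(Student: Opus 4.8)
The plan is to unravel the exponents in the definitions (2.xx)--(2.xx) of $\|\cdot\|_{\rho,\mu}$ and $\|\cdot\|^{+}_{\rho,\mu}$ and compare them monomial-by-monomial, i.e. for each fixed multi-index $(a,k,k')$ (and each fixed $m$ or $(m_1,m_2)$ in the $R_1, R_2$ cases). Since both norms are suprema of ratios $|B_{akk'}|/e^{E(a,k,k')}$ where the exponent $E$ is an explicit function of $(a,k,k',m_1,m_2,\rho,\mu,\theta)$, it suffices to bound the \emph{difference} of the two exponents from below (for one inequality) and from above (for the other), uniformly in the multi-indices. So the real content is an elementary but careful estimate on real exponents.

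For (2.53), I would compare $\|R\|^{+}_{\rho+\delta,\mu-\delta}$ with $\|R\|_{\rho,\mu}$. Writing $s=\sum_n(2a_n+k_n+k_n')|n|^\theta$ and $P=(n_1^\ast)^\theta$, $M=m^\ast(k,k')^\theta$, the exponent in $\|R\|_{\rho,\mu}$ is $\rho(s-2P)-\mu M$, while in $\|R\|^+_{\rho+\delta,\mu-\delta}$ (say for the $R_1$ piece) it is $(\rho+\delta)(s+2|m|^\theta-2P)-(\mu-\delta)M$. The difference is $\delta s + \delta(2|m|^\theta) - 2\delta P + \delta M + 2\rho|m|^\theta$, and the only genuinely problematic term is $-2\delta P = -2\delta (n_1^\ast)^\theta$, which must be absorbed. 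The point is the combinatorial fact (used throughout Bourgain's and Cong--Liu--Shi--Yuan's arguments) that the number of indices $n$ with $2a_n+k_n+k_n'\ne 0$ is finite, but more relevant here is that $(n_1^\ast)^\theta \le s$ only crudely; one needs instead to bound $e^{2\delta(n_1^\ast)^\theta}$ by a constant times $e^{\eta s}$ for a fraction $\eta$ of $\delta$ \emph{plus} a purely $\delta$-dependent constant. Concretely: either $n_1^\ast$ is ``small'' (say $(n_1^\ast)^\theta \le C(\theta)\delta^{-1/\theta}\log(1/\delta)$-type threshold), in which case $e^{2\delta(n_1^\ast)^\theta}\le (1/\delta)^{C(\theta)\delta^{-1/\theta}}$ outright, or $n_1^\ast$ is large, in which case $2\delta(n_1^\ast)^\theta$ is dominated by $\delta s$ because $s\ge 2(n_1^\ast)^\theta$ whenever there are at least two excited modes, and the degenerate case of a single excited mode is handled separately (there $m^\ast$ is comparable to $n_1^\ast$, so the $-\mu M$ gain helps, or the monomial is in $[R_0]$ and trivial). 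This case split, producing the factor $(1/\delta)^{C(\theta)\delta^{-1/\theta}}$, is where I expect the main work to lie.

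For (2.54), the direction is reversed and much easier: passing from $\|R\|^{+}_{\rho,\mu}$ to $\|R\|_{\rho+\delta,\mu-\delta}$ one compares $\rho(s-2P)-\mu M$ (at shifted parameters $\rho+\delta$, $\mu-\delta$) with the $+$-exponent, and now the extra terms $+2\delta|m|^\theta$ etc. that appear with the \emph{wrong} sign must be controlled; but here one only loses $e^{\delta(\text{something})}$ against a polynomial budget, and since $|m|\le n_1^\ast \cdot(\text{number of modes})$ and, crucially, in $R_1,R_2$ the indices $m,m_1,m_2$ range over $\mathrm{supp}$ of the monomial so $|m|^\theta\le s$, the loss is at worst $e^{3\delta s}$-type, which one converts to the stated $C(\theta)/\delta^2$ by the elementary inequality $e^{3\delta x}\le (C/\delta^2) e^{4\delta x}\cdots$ — more honestly, one writes $x^2 e^{-\delta x}\le (2/\delta)^2 e^{-\delta x/2}\cdots$, i.e. $\sup_{x\ge0} e^{-\delta x} x^2 = (2/(e\delta))^2$, giving the $\delta^{-2}$ factor. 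I would present (2.54) first as a warm-up and then do (2.53) with the case analysis above. The only subtlety to flag: one must check the bookkeeping that, in every one of the three pieces $R_0,R_1,R_2$, the normalization subtracts exactly $2\rho(n_1^\ast)^\theta$ and adds the $2|m_i|^\theta$ with the correct coefficient, so that the parameter shift $\rho\mapsto\rho+\delta$, $\mu\mapsto\mu-\delta$ acts on all terms consistently — this is a routine but necessary verification.
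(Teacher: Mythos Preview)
Your proposal has a fundamental gap: you treat the two norms $\|\cdot\|_{\rho,\mu}$ and $\|\cdot\|^{+}_{\rho,\mu}$ as different weightings applied to the \emph{same} family of coefficients, so that the proof would reduce to comparing exponents monomial-by-monomial. That is not what is going on. The ``$+$''--norm is defined on a \emph{different} expansion of $R$: starting from a general monomial $\mathcal{M}_{\alpha\kappa\kappa'}$ one writes $q_n^{\kappa_n}\bar q_n^{\kappa_n'}=I_n^{b_n}q_n^{l_n}\bar q_n^{l_n'}$ with $b_n=\kappa_n\wedge\kappa_n'$, then expands $I_n^{b_n}=(I_n(0)+J_n)^{b_n}$ and collects terms according to the number of $J$-factors. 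The coefficients $B_{akk'}$, $B_{akk'}^{(m)}$, $B_{akk'}^{(m_1,m_2)}$ appearing in $R_0,R_1,R_2$ are therefore \emph{sums} of old coefficients $B_{\alpha\kappa\kappa'}$, and the passage between the two norms requires tracking this redistribution.

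Concretely, for (\ref{053}) a fixed monomial in $R_0$ (with $k_nk_n'=0$) receives contributions from all $(\alpha,\kappa,\kappa')$ with $0\le\alpha_n\le a_n$, hence a combinatorial multiplicity $\prod_n(1+a_n)$; for $R_1$ and $R_2$ one picks up extra factors $(1+a_m)$, $(1+a_{m_1})(1+a_{m_2})$, etc. The factor $(1/\delta)^{C(\theta)\delta^{-1/\theta}}$ arises precisely from bounding
\[
\prod_n(1+a_n)\,e^{-\delta(\sum_n(2a_n+k_n+k_n')|n|^\theta-2(n_1^\ast)^\theta+m^\ast(k,k')^\theta)}
\]
via Lemma~\ref{H1} and a case split on whether $n_1^\ast=n_2^\ast=n_3^\ast$, $n_1^\ast>n_2^\ast=n_3^\ast$, or $n_2^\ast>n_3^\ast$ (the cases differ in how many of the $a_n$ can exceed $1$ for large $|n|$), together with the estimate $\prod_n(1+a_n)e^{-c\delta a_n|n|^\theta}\le(1/\delta)^{C(\theta)\delta^{-1/\theta}}$ from \cite{CLSY}. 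Your threshold argument on $(n_1^\ast)^\theta$ is not the mechanism at all.

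For (\ref{054}) the direction is reversed: recombining the $R_0,R_1,R_2$ pieces back into a single expansion introduces a multiplicity of at most $(\sum_n(a_n+b_n))^2$ (counting the choices of $m$ or $(m_1,m_2)$), and the $\delta^{-2}$ comes from $\sup_x x^2e^{-c\delta x}\le C/\delta^2$ applied with $x=\sum_{i\ge3}(n_i^\ast)^\theta$, again using Lemma~\ref{H1}. Your final paragraph touches this $x^2e^{-\delta x}$ idea but misidentifies where the $x^2$ comes from --- it is a combinatorial count, not an exponent comparison.
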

\begin{proof}
The details of the proof will be given in the Appendix.
\end{proof}

\begin{lem}\label{N3}
Let $(\widetilde{V}_n)_{n\in\mathbb{Z}}$ be Diophantine with $\gamma>0$ (see (\ref{005})). Then for any $\rho>0,0<\delta\ll1$ (depending only on $\theta$), the solutions of the homological equations (\ref{041}), which are given by (\ref{044}) and (\ref{045}), satisfy
\begin{eqnarray}\label{061}
||{F}_i||_{\rho+\delta,\mu-2\delta}^{+}\leq \frac{1}{\gamma}\cdot e^{C(\theta)\delta^{-\frac5\theta}}||{{R_i}}||_{\rho,\mu}^{+},
\end{eqnarray}
 where $i=0,1$ and $C(\theta)$ is a positive constant depending on $\theta$ only.
\end{lem}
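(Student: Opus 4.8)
The plan is to estimate the solutions $F_{akk'}$ and $F_{akk'}^{(m)}$ given explicitly by (\ref{044})--(\ref{045}) in terms of the $B$-coefficients, and the key point is the lower bound on the small divisor
\[
D(k,k')=\sum_{n\in\mathbb{Z}}(k_n-k_n')(n^2+\widetilde V_n).
\]
Write $D(k,k')=\sum_n (k_n-k_n')n^2 + \sum_n(k_n-k_n')\widetilde V_n$. Since $\operatorname{supp}k\cap\operatorname{supp}k'=\emptyset$ for the terms occurring in $F_0,F_1$, and since we are eliminating only the non-trivial terms (those not in $[R_0],[R_1]$), we may assume $l:=k-k'\neq 0$ is a finitely supported integer vector; the first sum is an integer, so $\|D(k,k')\|=\|\sum_n l_n\widetilde V_n\|$, and the Diophantine condition (\ref{005}) on $(\widetilde V_n)$ gives
\[
|D(k,k')|\ \geq\ \gamma\prod_{n\in\mathbb{Z}}\frac{1}{1+l_n^2|n|^4}.
\]

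The bulk of the work is then to absorb the product $\prod_n(1+l_n^2|n|^4)^{-1}$ into the exponential weight defining $\|\cdot\|_{\rho+\delta,\mu-2\delta}^{+}$ at the cost of the stated factor $e^{C(\theta)\delta^{-5/\theta}}$. Here I would proceed as follows. First, since $l_n=k_n-k_n'$ and $\operatorname{supp}k\cap\operatorname{supp}k'=\emptyset$, one has $|l_n|=k_n+k_n'$, so $\log\prod_n(1+l_n^2|n|^4)\leq \sum_n \log(1+(k_n+k_n')^2|n|^4)\leq \sum_n 2(k_n+k_n')\log(1+(k_n+k_n')|n|^2)$. Each summand must be dominated by a small multiple of the gain $\delta\bigl((2a_n+k_n+k_n')|n|^\theta\bigr)$ available from passing from $\rho$ to $\rho+\delta$ in the weight, plus a contribution from the momentum/$n_1^\ast$ terms. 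The elementary inequality to invoke is: for every $\eta>0$ there is $C(\theta,\eta)$ with $\log(1+t|n|^2)\leq \eta\,|n|^\theta + C(\theta,\eta)\log(1+t)+\text{(bounded)}$, and more crucially $\log(1+(k_n+k_n')|n|^2)\lesssim \delta|n|^\theta + \delta^{-?}\,(k_n+k_n')^{\text{something}}$ — one splits into the regime $|n|^\theta\gtrsim \delta^{-1}\log(1+k_n+k_n')$ (where the factor $\delta|n|^\theta$ wins) and the complementary regime where $|n|$ is bounded in terms of $\delta$ and $k_n+k_n'$, and there the logarithm is crudely bounded. Summing over the (finitely many) $n\in\operatorname{supp}\mathcal{M}_{akk'}$ and tracking how many such $n$ there can be — controlled again by $\sum_n(2a_n+k_n+k_n')|n|^\theta$ — produces a total loss of the form $\exp\bigl(C(\theta)\delta^{-1/\theta}\cdot(\text{stuff already in the weight})\bigr)$ times an absolute $e^{C(\theta)\delta^{-5/\theta}}$; the precise bookkeeping is what yields the exponent $5/\theta$. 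The extra $2|m|^\theta$ appearing in the $R_1$-weight (\ref{051}) versus the $R_0$-weight (\ref{050}) is exactly what is needed to handle the derivative coefficients $B_{akk'}^{(m)}$, so $i=0$ and $i=1$ are treated uniformly once this is set up. Finally, one notes $1/\delta$ (hence the gain in $\mu$ by $2\delta$ rather than $\delta$) is used to absorb the separate factor $\left(\frac1\delta\right)^{C(\theta)\delta^{-1/\theta}}$ coming from Lemma~\ref{N2}-type bounds on the $m^\ast(k,k')^\theta$ part of the weight.

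The main obstacle, and where care is genuinely required, is the passage from the pointwise divisor bound $\prod_n(1+l_n^2|n|^4)^{-1}$ to an exponential-of-$|n|^\theta$ loss that is \emph{uniform} over all finitely supported $(a,k,k')$: the product has unboundedly many factors as the support grows, and a naive bound $\log(1+l_n^2|n|^4)\leq 4\log|n|+\cdots$ summed over the support is not controlled by the weight unless one genuinely exploits that large $|n|$ in the support force a large value of $\sum_n(2a_n+k_n+k_n')|n|^\theta$. Quantifying this trade-off optimally — i.e.\ showing the loss is $e^{C(\theta)\delta^{-5/\theta}}\|R_i\|_{\rho,\mu}^{+}$ and not something worse — is the technical heart of the lemma, and I expect it to be carried out by the split-by-size-of-$|n|$ argument sketched above, with the constant $5/\theta$ emerging from combining one power of $\delta^{-1/\theta}$ from each of several such logarithmic estimates.
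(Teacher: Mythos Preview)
Your outline has a genuine gap at the key step. The weight gain available in passing from $(\rho,\mu)$ to $(\rho+\delta,\mu-2\delta)$ is \emph{not} $\delta\sum_n(2a_n+k_n+k_n')|n|^\theta$ as you suggest; the norm (\ref{H00}) carries the subtraction $-2(n_1^\ast)^\theta$, so by Lemma~\ref{H1} the actual gain is only of order
\[
\delta\Bigl(\sum_n(2a_n+k_n+k_n')|n|^\theta-2(n_1^\ast)^\theta+2m^\ast(k,k')^\theta\Bigr)\ \gtrsim\ \delta\Bigl(\sum_{i\geq 3}(n_i^\ast)^\theta+m^\ast(k,k')^\theta\Bigr),
\]
i.e.\ it controls only the \emph{tail} from the third largest mode onward. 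The two largest modes $n_1,n_2$ contribute $\approx 4\log|n_1|+4\log|n_2|$ to $\log\prod_n(1+l_n^2|n|^4)$ and are \emph{not} dominated by this tail gain in general. Your split-by-size argument cannot close without an extra input bounding $|n_1|,|n_2|$.

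That extra input is precisely what the paper supplies and what you are missing: one first separates off the trivial case $|\sum_n(k_n-k_n')n^2|>10\sum_n|k_n-k_n'|$, where $|D(k,k')|\geq 1$ and there is nothing to prove. In the remaining near-resonant case one has simultaneously the momentum relation and an approximate quadratic relation $|\sum l_n n^2|\lesssim\sum|l_n|$; this is exactly Bourgain's observation recalled in the introduction, and Lemma~\ref{a1} converts it into
\[
\sum_n|k_n-k_n'|\,|n|^{\theta/2}\ \leq\ 3\cdot 8^{\theta/2}\Bigl(\sum_{i\geq 3}(n_i^\ast)^\theta+m^\ast(k,k')^\theta\Bigr),
\]
so that $\sum_n|l_n||n|^{\theta/2}$ is bounded by the available weight gain. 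Only then does the split at $|n|\leq N\sim(\tilde\delta^{-1})^{4/\theta}$ (with $\tilde\delta\sim\delta$) work: for $|n|>N$ the term $\tilde\delta|l_n||n|^{\theta/2}$ dominates $8\ln(|l_n||n|)$, while the at most $N$ small modes contribute a total $\lesssim N\cdot\delta^{-1}\ln(1/\delta)\sim\delta^{-5/\theta}$, which is where the exponent $5/\theta$ comes from. Without Lemma~\ref{a1} (hence without first isolating the near-resonant case), the $n_1,n_2$ contributions to the divisor product are uncontrolled and your argument does not go through.
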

\begin{proof}
The details of the proof will be given in the Appendix.
\end{proof}
\subsection{The new perturbation $R_+$ and the new normal form $N_+$}Firstly, we will prove two lemmas.
\begin{lem}\label{H3}\textbf{(Poisson Bracket)}
Let $\theta\in(0,1),\rho,\mu>0$ and $0<\delta_{1},\delta_{2}\ll1$ (depending on $\theta,\rho,\mu$).\ Then one has
\begin{equation}\label{H4}
\left|\left|\{H_1,H_2\}\right|\right|_{\rho,\mu}\leq \frac{1}{\delta_{2}}\left(\frac{1}{\delta_{1}}\right)^{C({\theta}){\delta_{1}^{-\frac{1}{\theta}}}}
||H_1||_{\rho-\delta_{1},\mu+2\delta_{1}}||H_2||_{\rho-\delta_{2},\mu+2\delta_{2}},
\end{equation}
where $C(\theta)$ is a positive constant depending on $\theta$ only.
\end{lem}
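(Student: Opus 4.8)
The plan is to expand the Poisson bracket monomial by monomial and then reassemble the weights. Write $H_1=\sum_{a,k,k'}B_{akk'}\mathcal{M}_{akk'}$ and $H_2=\sum_{b,l,l'}C_{bll'}\mathcal{M}_{bll'}$; since the factors $I_n(0)^{a_n}$ are parameters, not phase variables,
\[
\{H_1,H_2\}=\sqrt{-1}\sum_{n\in\mathbb{Z}}\left(\frac{\partial H_1}{\partial q_n}\frac{\partial H_2}{\partial\bar q_n}-\frac{\partial H_1}{\partial\bar q_n}\frac{\partial H_2}{\partial q_n}\right).
\]
A pairing of $\mathcal{M}_{akk'}$ with $\mathcal{M}_{bll'}$ at a collision site $n$ — which requires $k_n\geq1$, $l_n'\geq1$ for the first product and $k_n'\geq1$, $l_n\geq1$ for the second — produces the monomial $\mathcal{M}_{\widetilde a\widetilde k\widetilde k'}$ with $\widetilde a=a+b$, and with $\widetilde k$ (resp.\ $\widetilde k'$) obtained from $k+l$ (resp.\ $k'+l'$) by lowering the $n$-th component by one, and it contributes $\sqrt{-1}(k_nl_n'-k_n'l_n)B_{akk'}C_{bll'}$ to the corresponding coefficient. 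Hence the coefficient $\widetilde B_{\widetilde a\widetilde k\widetilde k'}$ of $\{H_1,H_2\}$ is the sum of these contributions over all collision sites $n$ and all admissible ways of splitting the output monomial $(\widetilde a,\widetilde k,\widetilde k')$ into an $H_1$-part and an $H_2$-part.

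Next I would check that the weights entering the norm (\ref{H00}) behave well under such a pairing, which rests on three elementary facts. (i) The homogeneity weight is exactly additive up to the differentiation loss: $\sum_m(2\widetilde a_m+\widetilde k_m+\widetilde k'_m)|m|^\theta$ equals the sum of the corresponding quantities for the two parts minus $2|n|^\theta$. (ii) The momentum is additive, $m(\widetilde k,\widetilde k')=m(k,k')+m(l,l')$, so by subadditivity of $t\mapsto t^\theta$ on $[0,\infty)$ (this is where $0<\theta<1$ is used) one has $m^*(\widetilde k,\widetilde k')^\theta\leq m^*(k,k')^\theta+m^*(l,l')^\theta$. (iii) The collision site lies in the support of both parts, so $|n|\leq\min\{n_1^*(a,k,k'),n_1^*(b,l,l')\}$, while $n_1^*(\widetilde a,\widetilde k,\widetilde k')\leq\max\{n_1^*(a,k,k'),n_1^*(b,l,l')\}$; combining these with the purely algebraic bound $\sum_m(2a_m+k_m+k'_m)|m|^\theta-2(n_1^*)^\theta\geq-m^*(k,k')^\theta$ for the refunded weight — which is exactly the mechanism behind (\ref{M})–(\ref{M'}), namely $n_1^*\leq m^*+\sum_{j\geq2}|n_j|$ together with subadditivity of $t\mapsto t^\theta$ — and using the extra room that the hypothesis keeps $\mu+2\delta_1$, $\mu+2\delta_2$ in place of $\mu$, a short calculation shows that for each single splitting
\[
\frac{|B_{akk'}C_{bll'}|}{\exp\!\big[\rho\big(\sum_m(2\widetilde a_m+\widetilde k_m+\widetilde k'_m)|m|^\theta-2(n_1^*(\widetilde a,\widetilde k,\widetilde k'))^\theta\big)-\mu m^*(\widetilde k,\widetilde k')^\theta\big]}\leq e^{-\delta_1w_1-\delta_2w_2}\,\|H_1\|_{\rho-\delta_1,\mu+2\delta_1}\|H_2\|_{\rho-\delta_2,\mu+2\delta_2},
\]
where $w_1=w_1(a,k,k')\geq0$ and $w_2=w_2(b,l,l')\geq0$ are, up to harmless momentum corrections, the refunded weights of the two parts.

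It then remains to sum the prefactors $|k_nl_n'-k_n'l_n|$ over all collision sites $n$ and all admissible splittings of the fixed output monomial, and to bound the result by $\frac{1}{\delta_2}\big(\frac{1}{\delta_1}\big)^{C(\theta)\delta_1^{-1/\theta}}$. This is the main obstacle: the number of splittings of a monomial of large total degree grows super-exponentially with the degree and the polynomial factor $|k_nl_n'-k_n'l_n|$ is not bounded, so the sum is a priori divergent and has to be absorbed into the positive slack $e^{-\delta_1w_1-\delta_2w_2}$ produced above. I would estimate the number of splittings by $\prod_m(2\widetilde a_m+2)(2\widetilde k_m+2)(2\widetilde k'_m+2)$, and $|k_nl_n'-k_n'l_n|$ by a similar product, take logarithms, and then for each mode $m$ invoke $\log(N+2)\leq\delta_1 N|m|^\theta+C(\theta)\log(1/\delta_1)$ once $|m|$ exceeds a threshold of order $\delta_1^{-1/\theta}$ (legitimate since $|m|^\theta$ is then large), together with the trivial bound over the finitely many smaller modes; the first part is swallowed by $e^{-\delta_1w_1}$ and $e^{-\delta_2w_2}$, and the second produces the factor $\big(\frac{1}{\delta_1}\big)^{C(\theta)\delta_1^{-1/\theta}}$. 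This is precisely the convexity device already used in the proof of Lemma \ref{N2}, so one may either quote that lemma or repeat the argument; the residual sum over the collision site converges thanks to the weight decay retained from the $-2|n|^\theta$ term and is responsible for the mild factor $\frac{1}{\delta_2}$. Taking the supremum over all output monomials $(\widetilde a,\widetilde k,\widetilde k')$ yields (\ref{H4}).
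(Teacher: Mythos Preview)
Your setup is correct: the expansion of the Poisson bracket, the additivity of the homogeneity weight up to the $-2|j|^\theta$ loss, the subadditivity of the momentum under $t\mapsto t^\theta$, and the observation $|j|^\theta+(\nu_1^*)^\theta\leq(n_1^*)^\theta+(N_1^*)^\theta$ all match the paper exactly. The displayed bound for a single splitting, with the slack factors $e^{-\delta_1w_1-\delta_2w_2}$ where $w_i$ is the refunded weight of the $i$-th part plus twice its momentum, is also right.

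The gap is in the combinatorial step. You propose to bound the number of splittings of a fixed output by $\prod_m(2\widetilde a_m+2)(2\widetilde k_m+2)(2\widetilde k'_m+2)$, convert this to $e^{\delta_1\sum_m(2\widetilde a_m+\widetilde k_m+\widetilde k'_m)|m|^\theta}$ times $(1/\delta_1)^{C(\theta)\delta_1^{-1/\theta}}$, and then ``swallow'' the first exponential into $e^{-\delta_1w_1-\delta_2w_2}$. But this last absorption fails: one computes
\[
w_1+w_2=\sum_m(2\widetilde a_m+\widetilde k_m+\widetilde k'_m)|m|^\theta+2|j|^\theta-2(n_1^*)^\theta-2(N_1^*)^\theta+2m^*(k,k')^\theta+2m^*(l,l')^\theta,
\]
and since $|j|\leq\min\{n_1^*,N_1^*\}$ the middle block can be as negative as $-2\max\{n_1^*,N_1^*\}^\theta$, unredeemed if the momenta happen to be small. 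So $w_1+w_2$ does \emph{not} dominate the output's full weight, and the product count cannot be absorbed uniformly. Equivalently, the slack $w_1\geq(2-2^\theta)\sum_{i\geq3}(n_i^*)^\theta$ controls only the tail modes of the $H_1$-part; the two top modes $n_1,n_2$ are left free, and summing over them makes the series diverge unless something extra is said.

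The paper closes exactly this hole by a case analysis on the location of the collision site $j$. When $|j|\leq n_3^*$ (Case~1.1), the saving $2\rho(|j|^\theta-(n_1^*)^\theta)$ is large enough to upgrade the tail control to control of the \emph{full} weight $\sum_{i\geq1}(n_i^*)^\theta$, and then your convexity device works. When $j\in\{n_1,n_2\}$ (Case~1.2), one cannot do this, and the key new observation is Remark~\ref{026}: once $(n_i)_{i\geq3}$ and the momentum $m(k,k')$ of the $H_1$-part are specified, the pair $\{n_1,n_2\}$ is determined up to a choice inside $\mathrm{supp}\,\mathcal{M}_{\alpha\kappa\kappa'}$, hence ranges over at most $\sum_n(2\alpha_n+\kappa_n+\kappa'_n)$ values. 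This polynomial factor is then absorbed by the $\delta_2$-slack from the $H_2$-part (this is where the summation over the momentum $m(k,k')\in\mathbb{Z}$, controlled by the extra $\delta_1 m^*(k,k')^\theta$, is also used). Your sketch does not contain this constraint on $n_1,n_2$; without it the argument cannot close.
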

\begin{proof}
The details of proof will be left in the Appendix.
\end{proof}
\begin{lem}\label{E1}
Let $\theta\in(0,1),\rho>0$ and $0<\delta_1,\delta_2\ll1$ (depending on $\theta,\rho$). Assume further \begin{equation}\label{035}
\frac{1}{\delta_2}
\left(\frac{1}{\delta_1}\right)^{C({\theta}){\delta_1^{-\frac{1}{\theta}}}}||F||_{\rho-\delta_1,\mu+2\delta_{1}} \ll 1,
\end{equation}
where $C(\theta)$ is the constant given in (\ref{H4}) in Lemma \ref{H3}.\ Then for any Hamiltonian function $H$, we get
\begin{equation}\label{036}
||H\circ\Phi_F||_{\rho,\mu}
\leq\left(1+\frac{1}{\delta_2}\left(\frac{1}{\delta_1}\right)^{C_1({\theta}){\delta_1^{-\frac{1}{\theta}}}}
||F||_{\rho-\delta_1,\mu+2\delta_{1}}\right)
||H||_{\rho-\delta_2,\mu+2\delta_{2}},
\end{equation}
where $C_1(\theta)$ is a positive constant depending only on $\theta$.
\end{lem}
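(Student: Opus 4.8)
\end{lem}

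\begin{proof}
The plan is to expand $H\circ\Phi_F$ as a Lie series and estimate it term by term by means of the Poisson bracket bound of Lemma~\ref{H3}. Since $\Phi_F=X_F^{t}|_{t=1}$ and $\frac{\mathrm{d}}{\mathrm{d}t}\big(H\circ X_F^{t}\big)=\{H,F\}\circ X_F^{t}$, one has formally
\[
H\circ\Phi_F=\sum_{j\geq0}\frac1{j!}\,\mathrm{ad}_F^{\,j}H,\qquad \mathrm{ad}_F^{\,0}H=H,\qquad \mathrm{ad}_F^{\,j}H=\{\mathrm{ad}_F^{\,j-1}H,F\},
\]
so in the present coefficient norm framework the whole content of the lemma is the convergence of this series in $\|\cdot\|_{\rho,\mu}$ together with the stated bound. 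I would isolate the term $j=0$, which contributes $\|H\|_{\rho,\mu}\leq\|H\|_{\rho-\delta_2,\mu+2\delta_2}$ by the elementary embedding discussed at the end, and then bound the tail $\sum_{j\geq1}\frac1{j!}\,\mathrm{ad}_F^{\,j}H$ in $\|\cdot\|_{\rho,\mu}$ by $\frac1{\delta_2}\big(\frac1{\delta_1}\big)^{C_1(\theta)\delta_1^{-1/\theta}}\|F\|_{\rho-\delta_1,\mu+2\delta_1}\|H\|_{\rho-\delta_2,\mu+2\delta_2}$; adding the two pieces is precisely (\ref{036}).

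To estimate a single term I would peel off one bracket at a time, always placing $F$ in the \emph{first} (expensive) slot of Lemma~\ref{H3}: writing $\mathrm{ad}_F^{\,j}H=-\{F,\mathrm{ad}_F^{\,j-1}H\}$, applying (\ref{H4}) with loss $a_j$ on $F$ and loss $b_j$ on $\mathrm{ad}_F^{\,j-1}H$, and iterating down to $\mathrm{ad}_F^{\,0}H=H$, one obtains
\[
\big\|\mathrm{ad}_F^{\,j}H\big\|_{\rho,\mu}\leq\Big(\prod_{d=1}^{j}\tfrac1{b_d}\Big)\Big(\prod_{d=1}^{j}\big(\tfrac1{a_d}\big)^{C(\theta)a_d^{-1/\theta}}\Big)\,\big\|H\big\|_{\rho-\Sigma,\,\mu+2\Sigma}\,\prod_{d=1}^{j}\big\|F\big\|_{\rho-\sigma_d,\,\mu+2\sigma_d},
\]
where $\Sigma=\sum_{d}b_d$ and $\sigma_d=b_{d+1}+\dots+b_j+a_d$ is the accumulated $\rho$ loss at the depth where the $d$-th copy of $F$ is extracted. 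The point of the allocation is now transparent: I choose $\Sigma=\delta_2$ with the $b_d$'s of size $\sim\delta_2/j$ (front-loading $b_1$ if $\delta_2$ is not already $\leq\delta_1/2$, so that $\sigma_d\leq\delta_1$ for every $d$) and take every $a_d$ equal to a fixed fraction of $\delta_1$; then each inner $F$-factor is $\leq\|F\|_{\rho-\delta_1,\mu+2\delta_1}$ by the embedding, the expensive product collapses to $K^{j}$ with $K=\big(\tfrac2{\delta_1}\big)^{C(\theta)(2/\delta_1)^{1/\theta}}$ \emph{independent of $j$}, and the cheap product is at most $\big(\tfrac{Cj}{\delta_2}\big)^{j}$. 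Using $j^{j}/j!\leq e^{j}$,
\[
\sum_{j\geq1}\Big\|\tfrac1{j!}\,\mathrm{ad}_F^{\,j}H\Big\|_{\rho,\mu}\leq\|H\|_{\rho-\delta_2,\mu+2\delta_2}\sum_{j\geq1}\Big(\tfrac{C\,eK}{\delta_2}\,\|F\|_{\rho-\delta_1,\mu+2\delta_1}\Big)^{j},
\]
which, by (\ref{035}) (after renaming $C_1(\theta)$ so that $\tfrac1{\delta_2}\big(\tfrac1{\delta_1}\big)^{C_1(\theta)\delta_1^{-1/\theta}}\|F\|_{\rho-\delta_1,\mu+2\delta_1}\leq\tfrac12$), is a convergent geometric series bounded by twice its first term, that is, by $\tfrac1{\delta_2}\big(\tfrac1{\delta_1}\big)^{C_1(\theta)\delta_1^{-1/\theta}}\|F\|_{\rho-\delta_1,\mu+2\delta_1}\|H\|_{\rho-\delta_2,\mu+2\delta_2}$. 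In particular the Lie series converges, so $H\circ\Phi_F$ is well defined, and (\ref{036}) follows.

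I expect the main obstacle to be exactly this bookkeeping of the analyticity loss across infinitely many Poisson brackets: only the $\delta_2$ budget may be subdivided into $j$ pieces, and, crucially, the non-splittable large factor $(1/\delta)^{C(\theta)\delta^{-1/\theta}}$ of Lemma~\ref{H3} must be routed onto the $F$-slot with a \emph{fixed} loss, so that it contributes merely $K^{j}$ rather than the super-exponential $\big(\tfrac{j}{\delta_2}\big)^{C(\theta)\,j\,(j/\delta_2)^{1/\theta}}$ that a naive equal splitting would produce; this is what dictates the precise form of the smallness condition (\ref{035}) and of the final constant $C_1(\theta)$. The remaining, routine, point is the embedding $\|\cdot\|_{\rho',\mu'}\leq\|\cdot\|_{\rho,\mu}$ whenever $\rho'\leq\rho$, $\mu'\geq\mu$ and $\rho-\rho'=\tfrac12(\mu'-\mu)$, used repeatedly above; by (\ref{H00}) it reduces to the inequality
\[
\sum_{n}(2a_n+k_n+k_n')|n|^{\theta}+2\,m^{\ast}(k,k')^{\theta}\geq 2\,(n_1^{\ast})^{\theta},
\]
which follows from the superadditivity of $t\mapsto t^{\theta}$ for $0<\theta<1$ together with the momentum identity $m(k,k')=\sum_n(k_n-k_n')n$ --- that is, from Bourgain's observation (\ref{M})--(\ref{M'}) used here in the form (\ref{M''}).
\end{proof}
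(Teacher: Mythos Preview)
Your approach is essentially the paper's: expand $H\circ\Phi_F$ as a Lie series, iterate Lemma~\ref{H3} with $F$ in the expensive slot at fixed loss and the running bracket in the cheap slot at loss $\delta_2/j$, then sum via $j^{j}/j!\le e^{j}$ and a geometric series; your $\sigma_d$ bookkeeping is in fact more careful than the paper, which simply writes $\|F\|_{\rho-\delta_1,\mu+2\delta_1}$ at every step. Two slips in the last paragraph: the monotonicity you state is backwards (decreasing $\rho$ and increasing $\mu$ in the coupled way \emph{increases} the norm, which is precisely the direction you actually use above), and $t\mapsto t^{\theta}$ is \emph{sub}additive for $0<\theta<1$, not superadditive --- the inequality you need is exactly Lemma~\ref{H1}.
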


\begin{proof}
 Firstly,  we expand $H\circ\Phi_F$ into the Taylor series
 \begin{equation}\label{037}
 H\circ\Phi_F=\sum_{n\geq 0}\frac{1}{n!}H^{(n)},
 \end{equation}
where $H^{(n)}=\{H^{(n-1)},F\}$ and $H^{(0)}=H$.

We will estimate $||H^{(n)}||_{\rho,\mu}$ by using Lemma \ref{H3} again and again:
\begin{eqnarray}
\nonumber||H^{(n)}||_\rho
\nonumber&=&||\{H^{(n-1)},F\}||_{\rho,\mu}\\
\nonumber&\leq&\left(\left(\frac{1}{\delta_1}\right)^{C({\theta}){\delta_1^{-\frac{1}{\theta}}}}
||F||_{\rho-\delta_1,\mu+2\delta_{1}}\right)\left(\frac{n}{\delta_2}\right)
||H^{(n-1)}||_{\rho-\frac{\delta_2}{n},\mu+\frac{2\delta_{2}}{n}}\\
\nonumber&\leq&\left(\left(\frac{1}{\delta_1}\right)^{C({\theta}){\delta_1^{-\frac{1}{\theta}}}}
||F||_{\rho-\delta_1,\mu+2\delta_{1}}\right)^2\left(\frac{n}{\delta_2}\right)^2
||H^{(n-2)}||_{\rho-\frac{2\delta_2}{n},\mu+\frac{4\delta_{2}}{n}}\\
&&\dots\nonumber\\
\label{038}&\leq&\left(\left(\frac{1}{\delta_1}\right)^{C({\theta}){\delta_1^{-\frac{1}{\theta}}}}
||F||_{\rho-\delta_1,\mu+2\delta_{1}}\right)^n\left(\frac{n}{\delta_2}\right)^n
||H||_{\rho-\delta_2,\mu+2\delta_{2}}.
\end{eqnarray}
Hence in view of (\ref{038}), one has
\begin{eqnarray*}
||H\circ\Phi_F||_{\rho,\mu}
&\leq&\sum_{n\geq 0}\frac{1}{n!}\left(\left(\frac{1}{\delta_1}\right)^{C({\theta}){\delta_1^{-\frac{1}{\theta}}}}
||F||_{\rho-\delta_1,\mu+2\delta_{1}}\right)^{n}\left(\frac{n}{\delta_2}\right)^n||H||_{\rho-\delta_2,\mu+2\delta_{2}}\\
&=&\sum_{n\geq 0}\frac{n^n}{n!}\left(\frac1{\delta_2}\left(\frac{1}{\delta_1}\right)^{C({\theta}){\delta_1^{-\frac{1}{\theta}}}}
||F||_{\rho-\delta_1,\mu+2\delta_{1}}\right)^{n}||H||_{\rho-\delta_2,\mu+2\delta_{2}}\\
&\leq&\sum_{n\geq 0}\left(\frac e{\delta_2}\left(\frac{1}{\delta_1}\right)^{C({\theta}){\delta_1^{-\frac{1}{\theta}}}}
||F||_{\rho-\delta_1,\mu+2\delta_{1}}\right)^{n}||H||_{\rho-\delta_2,\mu+2\delta_{2}}\\
&&(\mbox{in view of $n^n<n!e^n$)}\\
&\leq & \left(1+\frac{1}{\delta_2}\left(\frac{1}{\delta_1}\right)^{C({\theta}){\delta_1^{-\frac{1}{\theta}}}}
||F||_{\rho-\delta_1,\mu+2\delta_{1}}\right)
||H||_{\rho-\delta_2,\mu+2\delta_{2}}\\
 &&\mbox{(in view of (\ref{036}) and $0<\delta_1,\delta_2\ll1$)},
\end{eqnarray*}
where $C_1(\theta)$ is a positive constant depending on $\theta$ only.
\end{proof}
Recall the new term $R_+$ is given by (\ref{048}) and
write
\begin{equation}\label{069}
R_+=R_{0+}+R_{1+}+R_{2+}.
\end{equation}
Following the proof of CLSY, one has
\begin{eqnarray}
||R_{0+}||_{\rho+3\delta,\mu-\frac{11}{2}\delta}^{+}
\label{0861} &\leq& \frac1{\gamma}\cdot e^{{\delta^{-\frac{10}{\theta}}}}(||R_0||_{\rho,\mu}^++||R_1||_{\rho,\mu}^+)(||R_0||_{\rho,\mu}^+
+{||R_1||_{\rho,\mu}^+}^2),\\
||R_{1+}||_{\rho+3\delta,\mu-\frac{11}{2}\delta}^{+}
\label{0862}&\leq& \frac1{\gamma}\cdot e^{{\delta^{-\frac{10}{\theta}}}}(||R_0||_{\rho,\mu}^++{||R_1||_{\rho,\mu}^+}^2),\\
||R_{2+}||_{\rho+3\delta,\mu-\frac{11}{2}\delta}^{+}
\label{0863}&\leq& ||R_2||_{\rho,\mu}^++\frac1{\gamma}\cdot e^{{\delta^{-\frac{10}{\theta}}}}(||R_0||_{\rho,\mu}^++||R_1||_{\rho,\mu}^+).
\end{eqnarray}

The new normal form $N_+$ is given in (\ref{047}). Note that $[R_0]$ (in view of (\ref{042})) is a constant which does not affect the Hamiltonian vector field. Moreover, in view of (\ref{043}), we denote by
\begin{equation}\label{087}
\omega_{n+}=n^2+ \widetilde{V}_n+\sum_{a\in\mathbb{N}^{\mathbb{Z}}}B_{a00}^{(n)}\mathcal{M}_{a00},
\end{equation}
where the terms $\sum_{a\in\mathbb{N}^{\mathbb{Z}}}B_{a00}^{(n)}\mathcal{M}_{a00}$ is the so-called frequency shift. The estimate of $\left|\sum_{a\in\mathbb{N}^{\mathbb{Z}}}B_{a00}^{(n)}\mathcal{M}_{a00}\right|$ will be given in the next section (see (\ref{114}) for the details).

Finally, we give the estimate of the Hamiltonian vector field.
\begin{lem}\label{H6}
Given a Hamiltonian
\begin{equation}\label{028}
H=\sum_{a,k,k'\in\mathbb{N}^{\mathbb{Z}}}B_{akk'}\mathcal{M}_{akk'},
\end{equation}
then for any $\mu>r>(\frac{1}{2-2^{\theta}}+3)\rho, ||q||_{r,\infty} < 1$ and $ ||I(0)||_{r,\infty} < 1 $,\ one has
\begin{equation}\label{029}
\sup_{j \in \mathbb{Z}}\left|e^{r|j|^{\theta}}\frac{\partial{H}}{\partial q_{j}}\right|\leq C(r,\rho,\mu,\theta)||H||_{\rho,\mu},
\end{equation}
where $C(r,\rho,\mu,\theta)$ is a positive constant depending on $r,\rho,\mu$ and $\theta$ only,\ and
\begin{equation}\label{030}
||I(0)||_{r,\infty} := \sup_{n\in \mathbb{Z}}|I_{n}(0)|e^{2r|n|^{\theta}}.
\end{equation}
\end{lem}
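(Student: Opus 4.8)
The plan is to estimate $\partial H/\partial q_j$ monomial by monomial and then sum. Fix $j\in\mathbb{Z}$. Differentiating the monomial $\mathcal{M}_{akk'}=\prod_n I_n(0)^{a_n}q_n^{k_n}\bar q_n^{k_n'}$ with respect to $q_j$ only affects the factor $q_j^{k_j}$ (recall $I_n(0)$ and $\bar q_n$ are not functions of $q_j$), producing $k_j\, q_j^{k_j-1}\prod_{n\neq j}q_n^{k_n}\cdot\prod_n I_n(0)^{a_n}\bar q_n^{k_n'}$; in particular only monomials with $k_j\geq 1$ survive. Using $|q_n|\leq e^{-r|n|^\theta}$ (which holds because $\|q\|_{r,\infty}<1$) and $|I_n(0)|\leq e^{-2r|n|^\theta}$, the absolute value of the differentiated monomial is bounded by
\begin{equation*}
k_j\, e^{-r(k_j-1)|j|^\theta}\prod_{n\neq j}e^{-rk_n|n|^\theta}\cdot\prod_n e^{-r(2a_n+k_n')|n|^\theta}
= k_j\, e^{r|j|^\theta}\, e^{-r\sum_n(2a_n+k_n+k_n')|n|^\theta}.
\end{equation*}
Hence $e^{r|j|^\theta}\,|\partial \mathcal{M}_{akk'}/\partial q_j|$ is controlled, after multiplying by a further $e^{r|j|^\theta}$, by $k_j\, e^{-r\sum_n(2a_n+k_n+k_n')|n|^\theta+2r|j|^\theta}$, and $|B_{akk'}|\leq \|H\|_{\rho,\mu}\, e^{\rho\sum_n(2a_n+k_n+k_n')|n|^\theta-2\rho(n_1^*)^\theta-\mu m^*(k,k')^\theta}\leq \|H\|_{\rho,\mu}\, e^{\rho\sum_n(2a_n+k_n+k_n')|n|^\theta}$ since $m^*\geq 0$ and $(n_1^*)^\theta\geq 0$.

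The core of the argument is then a summability estimate: I must show
\begin{equation*}
\sum_{a,k,k'}k_j\, e^{-(r-\rho)\sum_n(2a_n+k_n+k_n')|n|^\theta+2r|j|^\theta}<\infty
\end{equation*}
uniformly in $j$. The factor $2r|j|^\theta$ is absorbed as follows: since $k_j\geq 1$, the exponent contains $-(r-\rho)k_j|j|^\theta\leq -(r-\rho)|j|^\theta$, and one needs $(r-\rho)|j|^\theta$ to dominate $2r|j|^\theta$ — this is exactly where the hypothesis $r>(\tfrac{1}{2-2^\theta}+3)\rho$ enters, after one splits $\sum_n(2a_n+k_n+k_n')|n|^\theta$ cleverly between the piece that kills $e^{2r|j|^\theta}$ and the piece that gives geometric convergence; concretely one uses subadditivity/near-additivity of $t\mapsto t^\theta$ to compare $|j|^\theta$ against $n_1^*$-type quantities. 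The remaining sum over the sequences $(a,k,k')$ factors (almost) as a product over $n\in\mathbb{Z}$ of geometric-type series $\sum_{p\geq 0}(p+1)e^{-c p|n|^\theta}$ with $c=r-\rho>0$; each such factor is $1+O(e^{-c|n|^\theta})$, and $\prod_{n\in\mathbb{Z}}(1+O(e^{-c|n|^\theta}))$ converges because $\sum_{n}e^{-c|n|^\theta}<\infty$. Collecting constants gives (\ref{029}) with $C(r,\rho,\mu,\theta)$ depending only on the stated parameters.

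The main obstacle is the bookkeeping in the summability step: one must track the factor $e^{2r|j|^\theta}$ through the product decomposition and verify that the precise constant $\tfrac{1}{2-2^\theta}+3$ is what makes the loss $2r|j|^\theta$ payable out of the gain $(r-\rho)\sum_n(2a_n+k_n+k_n')|n|^\theta$ while still leaving a strictly positive rate $c>0$ for the convergence of the product over $n$. The inequalities $(x+y)^\theta\leq x^\theta+y^\theta$ and $x^\theta+y^\theta\leq 2^{1-\theta}(x+y)^\theta$ for $\theta\in(0,1)$, together with the elementary bound $\sum_{p\ge 0}(p+1)t^p=(1-t)^{-2}$ for $0\le t<1$, are the only analytic inputs; everything else is careful index management, so I would organize it as: (i) reduce to a single monomial, (ii) isolate the mode-$j$ contribution and extract the $e^{2r|j|^\theta}$ gain from $k_j\geq 1$, (iii) bound the tail product over $n\neq j$, (iv) reassemble and choose constants.
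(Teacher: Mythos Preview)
Your proposal contains a genuine gap. The step where you discard the factors $e^{-2\rho(n_1^*)^\theta}$ and $e^{-\mu m^*(k,k')^\theta}$ from the bound on $|B_{akk'}|$ is fatal: the resulting summability claim
\[
\sum_{a,k,k'}k_j\, e^{-(r-\rho)\sum_n(2a_n+k_n+k_n')|n|^\theta+2r|j|^\theta}<\infty\quad\text{uniformly in }j
\]
is simply false. The single monomial $q_j\bar q_j$ (i.e.\ $a=0$, $k_j=k_j'=1$, all other entries zero) already contributes $e^{-(r-\rho)\cdot 2|j|^\theta+2r|j|^\theta}=e^{2\rho|j|^\theta}\to\infty$. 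No ``clever splitting'' of $\sum_n(2a_n+k_n+k_n')|n|^\theta$ can rescue this, since for that monomial the entire weight equals $2|j|^\theta$ and $(r-\rho)\cdot 2|j|^\theta<2r|j|^\theta$ always. Note also that the hypothesis $\mu>r$ plays no role in your argument, which is a warning sign.

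The paper's proof keeps both discarded terms and splits into the cases $|j|\le n_3^*$ and $|j|>n_3^*$. In the first case $2r|j|^\theta$ is absorbed by $r\bigl((n_2^*)^\theta+(n_3^*)^\theta\bigr)$, and the condition $r>3\rho$ suffices for convergence. In the second case $j\in\{n_1,n_2\}$, so $2|j|^\theta\le 2(n_1^*)^\theta$; one then invokes Lemma~\ref{H1} to bound $\sum_n(2a_n+k_n+k_n')|n|^\theta-2(n_1^*)^\theta\ge(2-2^\theta)\sum_{i\ge 3}(n_i^*)^\theta-m^*(k,k')^\theta$, and the retained $e^{-\mu m^*(k,k')^\theta}$ absorbs the $+r\,m^*(k,k')^\theta$ produced here---this is exactly where $\mu>r$ is used. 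Moreover, in this second case $n_1,n_2$ are \emph{determined} by $j$, $(n_i)_{i\ge 3}$ and the momentum, so the sum over $(a,k,k')$ does not factor as a product over $n\in\mathbb{Z}$ as you suggest; it reduces instead to a sum over $(n_i)_{i\ge 3}$ and over the momentum $l$.
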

\begin{proof}
The details of the proof will be given in the Appendix.
\end{proof}

\section{Iteration and Convergence}

Now we give the precise
set-up of iteration parameters. Let $s\geq1$ be the $s$-th KAM
step.
 \begin{itemize}
 \item[] $\rho_0=\rho,$\ $r\geq \frac{100\rho}{2-2^{\theta}}$, $\mu_0\geq2r$,
 \item[]$\delta_{s}=\frac{\rho}{s^2}$,

 \item[]$\rho_{s+1}=\rho_{s}+3\delta_s$,\item[] $\mu_{s+1}=\mu_{s}-6\delta_s$

 \item[]$\epsilon_s=\epsilon_{0}^{(\frac{3}{2})^s}$, which dominates the size of
 the perturbation,

 \item[]$\lambda_s=e^{-C(\theta)(\ln{\frac{1}{\epsilon_{s+1}}})^{\frac{4}{\theta+4}}}$,

 \item[]$\eta_{s+1}=\frac{1}{20}\lambda_s\eta_s$,

 \item[]$d_0=0,\,d_{s+1}=d_s+\frac{1}{\pi^2(s+1)^2}$,

 \item[]$D_s=\{(q_n)_{n\in\mathbb{Z}}:\frac{1}{2}+d_s\leq|q_n|e^{r|n|^{\theta}}\leq1-d_s\}$.
 \end{itemize}
Denote the complex cube of size $\lambda>0$:
\begin{equation}\label{088}
\mathcal{C}_{\lambda}({V^*})=\left\{(V_n)_{n\in\mathbb{Z}}\in\mathbb{C}^{\mathbb{Z}}:|V_n-V^*_n|\leq \lambda\right\}.
\end{equation}

\begin{lem}{\label{E2}}
Suppose $H_{s}=N_{s}+R_{s}$ is real analytic on $D_{s}\times\mathcal{C}_{\eta_{s}}(V^*_{s})$,
where $$N_{s}=\sum_{n\in\mathbb{Z}}(n^2+\widetilde V_{n,s})|q_n|^2$$ is a normal form with coefficients satisfying
\begin{eqnarray}
\label{089}&&\widetilde{V}_{s}(V_{s}^*)=\omega,\\
\label{090}&&\left|\left|\frac{\partial \widetilde{V}_s}{{\partial V}}-I\right|\right|_{l^{\infty}\rightarrow l^{\infty}}<d_s\epsilon_{0}^{\frac{1}{10}},
\end{eqnarray}
and $R_{s}=R_{0,s}+R_{1,s}+R_{2,s}$ satisfying
\begin{eqnarray}
\label{091}&&||R_{0,s}||_{\rho_{s},\mu_{s}}^{+}\leq \epsilon_{s},\\
\label{092}&&||R_{1,s}||_{\rho_{s},\mu_{s}}^{+}\leq \epsilon_{s}^{0.6},\\
\label{093}&&||R_{2,s}||_{\rho_{s},\mu_{s}}^{+}\leq (1+d_s)\epsilon_0.
\end{eqnarray}
Then for all $V\in\mathcal{C}_{\eta_{s}}(V_{s}^*)$ satisfying $\widetilde V_{s}(V)\in\mathcal{C}_{\lambda_s}(\omega)$, there exist real analytic symplectic coordinate transformations
$\Phi_{s+1}:D_{s+1}\rightarrow D_{s}$ satisfying
\begin{eqnarray}
\label{094}&&||\Phi_{s+1}-id||_{r,\infty}\leq \epsilon_{s}^{0.5},\\
\label{095}&&||D\Phi_{s+1}-I||_{(r,\infty)\rightarrow(r,\infty)}\leq \epsilon_{s}^{0.5},
\end{eqnarray}
such that for
$H_{s+1}=H_{s}\circ\Phi_{s+1}=N_{s+1}+R_{s+1}$, the same assumptions as above are satisfied with `$s+1$' in place of `$s$', where $\mathcal{C}_{\eta_{s+1}}(V_{s+1}^*)\subset\widetilde V_{s}^{-1}(\mathcal{C}_{\lambda_s}(\omega))$ and
\begin{equation}\label{096}
||\widetilde{V}_{s+1}-\widetilde{V}_{s}||_{\infty}\leq\epsilon_{s}^{0.5},
\end{equation}
\begin{equation}\label{097}
||V_{s+1}^*-V_{s}^*||_{\infty}\leq2\epsilon_{s}^{0.5}.
\end{equation}
\end{lem}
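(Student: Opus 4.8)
The plan is to carry out one step of the Newton-type iteration, following the structure already set up in Section 2, and to verify that the quantitative hypotheses propagate with $s$ replaced by $s+1$. First I would fix $V^*_{s+1}$ (to be determined) and restrict to the parameter set $\{V\in\mathcal{C}_{\eta_s}(V^*_s):\widetilde V_s(V)\in\mathcal{C}_{\lambda_s}(\omega)\}$; on this set the small divisors $\sum_n(k_n-k'_n)(n^2+\widetilde V_{n,s})$ appearing in (\ref{044})--(\ref{045}) are controlled by the Diophantine condition (\ref{005}) with $\gamma\sim\lambda_s$, so Lemma \ref{N3} applies and gives the solution $F=F_0+F_1$ of the homological equation (\ref{041}) with $||F_i||^{+}_{\rho_s+\delta_s,\mu_s-2\delta_s}\le \lambda_s^{-1}e^{C(\theta)\delta_s^{-5/\theta}}||R_{i,s}||^{+}_{\rho_s,\mu_s}$. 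Using the parameter choices $\delta_s=\rho/s^2$, $\lambda_s=e^{-C(\theta)(\ln\frac1{\epsilon_{s+1}})^{4/(\theta+4)}}$, $\epsilon_{s+1}=\epsilon_s^{3/2}$, together with (\ref{091})--(\ref{092}), one checks the smallness hypothesis (\ref{035}) of Lemma \ref{E1}, so the time-one map $\Phi_{s+1}=X_F^t|_{t=1}$ is well defined as a map $D_{s+1}\to D_s$ (the loss $d_{s+1}-d_s=\frac1{\pi^2(s+1)^2}$ in the domain absorbs the shift $||\Phi_{s+1}-id||$). The estimates (\ref{094})--(\ref{095}) then follow from Lemma \ref{H6} applied to $F$, after converting the norm bound on $F$ into a bound on its vector field and using $\mu_s>2r$.

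Next I would estimate the new perturbation. From (\ref{048}) and the splitting (\ref{069}), the bounds (\ref{0861})--(\ref{0863}) — established exactly as in Cong--Liu--Shi--Yuan \cite{CLSY} via Lemma \ref{H3} (Poisson bracket) and Lemma \ref{E1} (composition with $\Phi_{s+1}$), together with the conversion estimates (\ref{053})--(\ref{054}) of Lemma \ref{N2} between the two norms — give $||R_{0,s+1}||^{+}\lesssim \lambda_s^{-1}e^{\delta_s^{-10/\theta}}(\epsilon_s+\epsilon_s^{0.6})(\epsilon_s+\epsilon_s^{1.2})$, $||R_{1,s+1}||^{+}\lesssim \lambda_s^{-1}e^{\delta_s^{-10/\theta}}(\epsilon_s+\epsilon_s^{1.2})$, and $||R_{2,s+1}||^{+}\le (1+d_s)\epsilon_0+\lambda_s^{-1}e^{\delta_s^{-10/\theta}}(\epsilon_s+\epsilon_s^{0.6})$, all at the shrunk indices $\rho_{s+1},\mu_{s+1}$. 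The arithmetic core is then to verify, using $\epsilon_{s+1}=\epsilon_s^{3/2}$ and the explicit form of $\lambda_s$, that $\lambda_s^{-1}e^{\delta_s^{-10/\theta}}\le \epsilon_s^{-0.1}$ for $\epsilon_0$ small (this is precisely why $\lambda_s$ was chosen with exponent $4/(\theta+4)$: it must beat both the $e^{\delta_s^{-10/\theta}}$ factor and leave a power of $\epsilon_s$ to spare), whence $||R_{0,s+1}||^{+}\le \epsilon_s^{1.9}\cdot\epsilon_s^{-0.1}\cdot\text{const}\le\epsilon_{s+1}$, $||R_{1,s+1}||^{+}\le\epsilon_{s+1}^{0.6}$, and $||R_{2,s+1}||^{+}\le(1+d_s)\epsilon_0+\epsilon_s^{0.5}\le(1+d_{s+1})\epsilon_0$ since $d_{s+1}-d_s=\frac1{\pi^2(s+1)^2}\gg\epsilon_s^{0.5}/\epsilon_0$.

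Finally I would treat the normal form. By (\ref{047}) and (\ref{087}) the new frequency is $\omega_{n,s+1}=n^2+\widetilde V_{n,s}+\sum_a B^{(n)}_{a00,s}\mathcal{M}_{a00}$; the frequency shift is bounded, via Lemma \ref{H6}-type estimates applied to $R_{1,s}$, by $||R_{1,s}||^{+}\le\epsilon_s^{0.6}$, giving (\ref{096}). To keep the prescribed frequency (item (2) of Theorem \ref{Thm}) one must re-solve $\widetilde V_{s+1}(V^*_{s+1})=\omega$: by the inductive bound (\ref{090}) the map $V\mapsto\widetilde V_{s+1}(V)$ is a small perturbation of the identity, so the implicit function theorem (or a contraction argument on $\mathcal{C}_{\eta_s}(V^*_s)$) produces a unique $V^*_{s+1}$ with $||V^*_{s+1}-V^*_s||_\infty\le 2\epsilon_s^{0.5}$ (estimate (\ref{097})), and differentiating propagates (\ref{090}) to step $s+1$ because $d_{s+1}-d_s$ again dominates the increment; one then checks $\mathcal{C}_{\eta_{s+1}}(V^*_{s+1})\subset\widetilde V_s^{-1}(\mathcal{C}_{\lambda_s}(\omega))$ using $\eta_{s+1}=\frac1{20}\lambda_s\eta_s$ and the near-identity property of $\widetilde V_s$. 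The main obstacle is the arithmetic of the second paragraph — showing the gain $\epsilon_s^{3/2}$ genuinely survives the double exponential loss $\lambda_s^{-1}e^{\delta_s^{-10/\theta}}$ coming from the Gevrey-type norm estimates (Lemmas \ref{N2}, \ref{N3}, \ref{H3}); this is exactly where the super-exponentially small factors $(\frac1\delta)^{C(\theta)\delta^{-1/\theta}}$ in the norm comparisons must be balanced against the choice of $\lambda_s$, and it is the delicate point that the whole parameter scheme is engineered to make work.
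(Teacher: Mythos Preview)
Your overall architecture matches the paper's, and the second and third paragraphs (propagating the $R_{i,s+1}$ bounds via (\ref{0861})--(\ref{0863}), then fixing the frequency by the inverse function theorem) are essentially correct. But your small-divisor step contains a real gap, and it also misidentifies the role of $\lambda_s$.

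You assert that on $\{V:\widetilde V_s(V)\in\mathcal{C}_{\lambda_s}(\omega)\}$ the divisors satisfy (\ref{005}) ``with $\gamma\sim\lambda_s$'', and then feed this into Lemma~\ref{N3}. This is not justified and in fact false as stated: a $\lambda_s$-perturbation of $\omega$ need not be Diophantine at all, because for $(k,k')$ of large support the product $\prod_n(1+(k_n-k'_n)^2|n|^4)^{-1}$ can be arbitrarily small compared with $\lambda_s\sum_n|k_n-k'_n|$, so the perturbation can drive the divisor to zero. The paper resolves this by a \emph{truncation} that you have omitted. Passing from $(\rho_s,\mu_s)$ to $(\rho_{s+1},\mu_{s+1})$ gains the factor (\ref{19010501}), so monomials violating (\ref{098}) are already of size $\le\epsilon_{s+1}$ and can simply be left in $R_{s+1}$; only the remaining ``low'' monomials need to be eliminated by $F$. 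For those, Lemma~\ref{a1} bounds $\sum_n|k_n-k'_n||n|^{\theta/2}$ by $B_s\sim s^2\ln\frac1{\epsilon_{s+1}}$, and the computation culminating in (\ref{103}) then shows that for such bounded $(k,k')$ the Diophantine product itself exceeds $\lambda_s$. \emph{That} is where $\lambda_s$ comes from: it is not a degraded Diophantine constant but the stability radius of the truncated condition---since the product is $>\lambda_s$, a $\lambda_s$-perturbation of $\omega$ preserves (\ref{101}) with the \emph{original} fixed $\gamma$, and Lemma~\ref{N3} then applies with that $\gamma$. Accordingly (\ref{104}) and (\ref{0861})--(\ref{0863}) carry the harmless constant $\frac1\gamma$, not $\lambda_s^{-1}$. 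Your arithmetic in the second paragraph would survive the extra $\lambda_s^{-1}$ you introduce, but the small-divisor bound you invoke to produce it is unfounded without the truncation argument, and your explanation of why $\lambda_s$ has the exponent $4/(\theta+4)$ (to ``beat'' $e^{\delta_s^{-10/\theta}}$) inverts the actual logic: that exponent is the output of the estimate (\ref{103}), not an input chosen to balance the homological loss.
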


\begin{proof}
In the step $s\rightarrow s+1$, there is saving of a factor
\begin{equation}\label{19010501}
e^{-\delta_{s}\left(\sum_{n}(2a_n+k_n+k'_n)|n|^{\theta}-2|n_1^*|^{\theta}+2m^{\ast}(k,k')^{\theta}\right)}.
\end{equation}
By (\ref{H2}), one has
\begin{equation}
(\ref{19010501})\leq e^{-(2-2^\theta)\delta_{s}\left(\sum_{i\geq3}|n_i|^{\theta}\right)-\delta_sm^*(k,k')^{\theta}}\leq e^{-(2-2^\theta)\delta_{s}\left(\sum_{i\geq3}|n_i|^{\theta}+m^*(k,k')^{\theta}\right)}.
\end{equation}
Recalling after this step, we need
\begin{eqnarray*}
&&||R_{0,s+1}||_{\rho_{s+1},\mu_{s+1}}^{+}\leq \epsilon_{s+1},\\
&&||R_{1,s+1}||_{\rho_{s+1},\mu_{s+1}}^{+}\leq \epsilon_{s+1}^{0.6}.
\end{eqnarray*}
Consequently, in $R_{i,s}\ (i=0,1)$, it suffices to eliminate the nonresonant monomials $\mathcal{M}_{akk'}$ for which
\begin{equation*}
e^{-(2-2^\theta)\delta_{s}(\sum_{i\geq3}|n_i|^{\theta}+m^{\ast}(k,k')^{\theta})}\geq\epsilon_{s+1},
\end{equation*}
that is
\begin{equation}\label{098}
\sum_{i\geq3}|n_i|^{\theta}+m^{\ast}(k,k')^{\theta}\leq\frac{s^2}{(2-2^\theta)\rho}\ln\frac{1}{\epsilon_{s+1}}.
\end{equation}
On the other hand, in the small divisors analysis (see Lemma \ref{a1}), one has
\begin{eqnarray}
\nonumber\sum_{n\in\mathbb{Z}}|k_n-k_n'||n|^{\theta/2}
\nonumber&\leq& 3\cdot 6^{\theta/2}\left(\sum_{i\geq3}|n_i|^{\theta}+m^{\ast}(k,k')^{\theta}\right)\ \  \\
&\leq&\nonumber\frac{3\cdot 6^{\theta/2}\cdot s^2}{(2-2^\theta)\rho} \ln\frac{1}{\epsilon_{s+1}}\ \  \mbox{(in view of  (\ref{098}))}\\
\nonumber&:=& B_s.
\end{eqnarray}
Hence we need only impose condition on $(\widetilde{V}_n)_{|n|\leq \mathcal{N}_{s}}$, where
\begin{equation}\label{100}
\mathcal{N}_{s}\sim B_s^{2/\theta}.
\end{equation}
Correspondingly, the Diophantine condition becomes
\begin{equation}\label{101}
\left|\left|\sum_{|n|\leq \mathcal{N}_{s}}(k_n-k'_n)\widetilde{V}_{n,s}\right|\right|\geq \gamma\prod_{|n|\leq \mathcal{N}_{s}}\frac{1}{1+(k_n-k'_n)^2|n|^4}.
\end{equation}
We finished the truncation step.

Next we will show (\ref{101})  preserves under small perturbation of $(\widetilde{V}_n)_{|n|\leq \mathcal{N}_{s}}$ and this is equivalent to get lower bound on the right hand side of (\ref{101}). Let \begin{equation}M_s\sim \left(\frac{B_s}{\ln B_s}\right)^{\frac{2.5}{\theta+2.5}},\end{equation} then we have
\begin{eqnarray}
\nonumber\prod_{|n|\leq \mathcal{N}_{s}}\frac{1}{1+(k_n-k'_n)^2|n|^4}
\nonumber&=&e^{\sum_{|n|\leq M_s}\ln\left(\frac{1}{1+(k_n-k'_n)^2|n|^4}\right)+\sum_{|n|> M_s}\ln\left(\frac{1}{1+(k_n-k'_n)^2|n|^4}\right)}\\
\nonumber&\geq& e^{-C(\theta)\sum_{|n|\leq M_s,k_n\neq k'_n}\ln\left(|k_n-k'_n||n|^{\frac\theta2}\right)-\sum\limits_{|n|> M_s,k_n\neq k'_n}4\left(|k_n-k'_n|\cdot\ln|n|\right)}\\
\nonumber&\geq& e^{-C(\theta)M_s\ln B_s-4\sum_{|n|> M_s,k_n\neq k'_n}\left(|k_n-k'_n||n|^{\frac{\theta}{2}}(|n|^{-\frac{\theta}{2}}\ln|n|)\right)}\\
\nonumber&\geq& e^{-C(\theta)M_s\ln B_s-C(\theta)(M_s^{-\frac{\theta}{2}}\ln M_s)B_s}\\
\nonumber&\geq& e^{-C(\theta)M_s\ln B_s-C(\theta)M_s^{-\frac{\theta}{2.5}}B_s}\\
\nonumber&\geq& e^{-C(\theta)B^{\frac{3}{\theta+3}}}\\
\nonumber&\geq& e^{-C(\theta)s^{\frac{6}{3+\theta}} (\ln{\frac{1}{\epsilon_{s+1}}})^{\frac{3}{\theta+3}}}\\
\label{103}&>&e^{-C(\theta)(\ln{\frac{1}{\epsilon_{s+1}}})^{\frac{4}{\theta+4}}}=\lambda_s,
\end{eqnarray}
where the last inequality is based on $\epsilon_0$ is small enough.

Assuming $V\in \mathcal{C}_{\lambda_s}(\omega)$, from the lower bound (\ref{103}), the relation (\ref{101}) remains true if we substitute $V$ for $\omega$. Moreover, there is analyticity on $\mathcal{C}_{\lambda_s}(\omega)$. The transformations $\Phi_{s+1}$ is obtained as the time-1 map $X_{F_s}^{t}|_{t=1}$ of the Hamiltonian
vector field $X_{F_s}$ with $F_s=F_{0,s}+F_{1,s}$. Taking $\rho=\rho_s$, $\delta=\delta_s$ in Lemma \ref{N3}, we get
\begin{eqnarray}\label{104}
||F_{i,s}||_{\rho_s+\delta_s,\mu_s-{2}\delta_s}^{+}\leq \frac{1}{\gamma}\cdot e^{C(\theta)\delta_s^{-\frac5\theta}}||R_{i,s}||_{\rho_s,\mu_s}^{+},
\end{eqnarray}
where $i=0,1$. By Lemma \ref{N2}, we get
\begin{equation}\label{105}
||F_{i,s}||_{\rho_s+2\delta_s,\mu_s-3\delta_s}\leq\frac{C(\theta)}{\delta_s^2}||F_{i,s}||
_{\rho_s+\delta_s,\mu_s-2\delta_s}^{+}.
\end{equation}
Combining (\ref{091}), (\ref{092}), (\ref{104}) and (\ref{105}), we get
\begin{equation}\label{106}
||F_{s}||_{\rho_s+2\delta_s,\mu_s-3\delta_s}\leq\frac{C(\theta)}{\gamma\delta_s^2}e^{C(\theta)\delta_s^{-\frac5\theta}}(\epsilon_{s}+\epsilon_{s}^{0.6}).
\end{equation}
By Lemma \ref{H6}, we get
\begin{eqnarray}
\sup_{||q||_{r,\infty}<1}||X_{F_s}||_{r,\infty}\nonumber
&\leq&C(\rho,\theta)||F_{s}||_{\rho_s+2\delta_s,\mu_{s}-3\delta_{s}}\nonumber\\
&\leq&\frac{C(\rho,\theta)}{\gamma\delta_s^2}e^{C(\theta)\delta_s^{-\frac5\theta}}(\epsilon_{s}+\epsilon_{s}^{0.6})
\nonumber\\
\label{107}&\leq&\epsilon_{s}^{0.55},
\end{eqnarray}
where noting that $0<\epsilon_0\ll1$ small enough and depending on $\rho,\theta$ only.

Since $\epsilon_{s}^{0.55}\ll\frac{1}{\pi^2(s+1)^2}=d_{s+1}-d_s$, we have $\Phi_{s+1}:D_{s+1}\rightarrow D_{s}$ with
\begin{equation}\label{108}
\|\Phi_{s+1}-id\|_{r,\infty}\leq\sup_{q\in D_s}\|X_{F_s}\|_{r,\infty}\leq\epsilon_{s}^{0.55}<\epsilon_{s}^{0.5},
\end{equation}
which is the estimate (\ref{094}). Moreover, from (\ref{108}) we get
\begin{equation}\label{109}
\sup_{q\in D_s}||DX_{F_s}-I||_{r,\infty}\leq\frac{1}{d_s}\epsilon_{s}^{0.55}\ll\epsilon_{s}^{0.5},
\end{equation}
and thus the estimate (\ref{095}) follows.

Moreover, under the assumptions (\ref{091})-(\ref{093}) at stage $s$, we get from (\ref{0861}), (\ref{0862}) and (\ref{0863}) that
\begin{eqnarray*}
||R_{0,s+1}||_{\rho_{s+1},\mu_{s+1}}^{+}
&\leq& e^{\frac{s^{\frac{20}{\theta}}}{\rho^{\frac{10}{\theta}}}}
\left(\epsilon_{0}^{(\frac{3}{2})^s}+\epsilon_{0}^{0.9(\frac{3}{2})^{s-1}}\right)\left(\epsilon_{0}^{(\frac{3}{2})^s}+\epsilon_{0}^{1.8(\frac{3}{2})^{s-1}}\right)\\
&=&e^{\frac{s^{\frac{20}{\theta}}}{\rho^{\frac{10}{\theta}}}}\left(\epsilon_{0}^{2.2(\frac{3}{2})^s}+\epsilon_{0}^{1.8(\frac{3}{2})^s}
+\epsilon_{0}^{1.6(\frac{3}{2})^s}+\epsilon_{0}^{2(\frac{3}{2})^s}\right)\\
&\leq& 4e^{\frac{s^{\frac{20}{\theta}}}{\rho^{\frac{20}{\theta}}}}\epsilon_{0}^{1.6(\frac{3}{2})^s}\\
&<&\epsilon_{0}^{1.5(\frac{3}{2})^s}\ \mbox{for $0<\epsilon_0\ll1$ (depending on $\rho,\theta$ only)}\\
&=&\epsilon_{s+1},\\
||R_{1,s+1}||_{\rho_{s+1},\mu_{s+1}}^{+}
&\leq& e^{\frac{s^{\frac{20}{\theta}}}{\rho^{\frac{20}{\theta}}}}\left( \epsilon_{0}^{(\frac{3}{2})^s}+\epsilon_{0}^{1.8(\frac{3}{2})^{s-1}}\right)\\
&=&e^{\frac{s^{\frac{20}{\theta}}}{\rho^{\frac{20}{\theta}}}}\left( \epsilon_{0}^{(\frac{3}{2})^s}+\epsilon_{0}^{1.2(\frac{3}{2})^{s}}\right)\\
&\leq&2 e^{\frac{s^{\frac{20}{\theta}}}{\rho^{\frac{20}{\theta}}}}\epsilon_{0}^{(\frac{3}{2})^s}\\
&<&\epsilon_{s+1}^{0.6}\ \ \mbox{for $0<\epsilon_0\ll1$ (depending on $\rho,\theta$ only)},\\
\end{eqnarray*}
and
\begin{eqnarray*}
||R_{2,s+1}||_{\rho_{s+1},\mu_{s+1}}^{+}
&\leq& ||R_{2,s}||_{\rho_{s},\mu_{s}}^{+}+e^{\frac{s^{\frac{20}{\theta}}}{\rho^{\frac{20}{\theta}}}}
\left(\epsilon_{0}^{(\frac{3}{2})^s}+\epsilon_{0}^{0.6(\frac{3}{2})^{s}}\right)\\
&\leq&(1+d_s)\epsilon_0+2e^{\frac{s^{\frac{20}{\theta}}}{\rho^{\frac{20}{\theta}}}}
\epsilon_{0}^{0.6(\frac{3}{2})^s}\\
&\leq&(1+d_{s+1})\epsilon_0\ \ \mbox{for $0<\epsilon_0\ll1$ (depending on $\rho,\theta$ only)},
\end{eqnarray*}
which are just the assumptions (\ref{091})-(\ref{093}) at stage $s+1$.

If $V\in \mathcal{C}_{\frac{\eta_s}{2}}(V_s^*)\subset\mathcal{C}_{{\eta_s}}(V_s^*)$ and using Cauchy's estimate, for any $m$ one has
\begin{eqnarray}
\nonumber\sum_{n\in\mathbb{Z}}\left|\frac{\partial \widetilde{V}_{m,s}}{\partial V_n}(V)\right|
\nonumber&\leq& \frac{2}{\eta_s}||\widetilde{V}_s||_\infty\\
\label{110}&<&10 \eta_s^{-1}\ \ \mbox{(since $||\widetilde{V}_s||_\infty\leq 1 $)}.
\end{eqnarray}
Let $V\in \mathcal{C}_{\frac{1}{10}\lambda_s\eta_s}(V_s^*)$, then
\begin{eqnarray*}
||\widetilde{V}_s(V)-\omega||_{\infty}
&=&||\widetilde{V}_s(V)-\widetilde{V}_s(V_s^*)||_{\infty}\\
& \leq&\sup_{\mathcal{C}_{\frac{1}{10}\lambda_s\eta_s}(V_s)}\left|\left|\frac{\partial \widetilde{V}_s}{\partial V}\right|\right|_{l^{\infty}\rightarrow l^{\infty}}||V-V_s^*||_{\infty}\\
&<&10 \eta_s^{-1}\cdot\frac{1}{10}\lambda_s\eta_s\ \ \mbox{(in view of (\ref{110}))}\\
&=&\lambda_s,
\end{eqnarray*}
that is
\begin{equation*}
\widetilde{V}_s\left(\mathcal{C}_{\frac{1}{10}\lambda_s\eta_s}(V_s)\right)\subseteq \mathcal{C}_{\lambda_s}(\omega).
\end{equation*}
Note that
\begin{eqnarray}
\nonumber\left|B^{(m)}_{a00}\right|
\nonumber&\leq& ||R_{1,s+1}||_{\rho_{s+1},\mu_{s+1}}^+e^{2\rho_{s+1}\left(\sum_{n}a_n|n|^{\theta}+|m|^{\theta}-(n_1^{*})^{\theta}\right)}\\
\label{111}&<&\epsilon_{0}^{0.6(\frac{3}{2})^{s}}e^{2\rho_{s+1}\left(\sum_{n}a_n|n|^{\theta}+|m|^{\theta}-(n_1^{*})^{\theta}\right)}.
\end{eqnarray}
Assuming further
\begin{equation}\label{112}
I_{n}(0)\leq e^{-2r|n|^{\theta}}
\end{equation}
and for any $s$,
\begin{equation}\label{113}
\rho_s<\frac{1}{2}r,
\end{equation}
we obtain
\begin{eqnarray}
\nonumber\left|\sum_{a\in\mathbb{N}^{\mathbb{Z}}}B^{(m)}_{a00}\mathcal{M}_{a00}\right|
\nonumber&\leq & \epsilon_{0}^{0.6(\frac{3}{2})^{s}}\sum_{a\in\mathbb{N}^{\mathbb{Z}}}e^{2\rho_{s+1}\left(\sum_{n}a_n|n|^{\theta}+|m|^{\theta}-(n_1^{*})^{\theta}\right)}\prod_{n\in\mathbb{Z}}I_{n}(0)^{a_n}\\
\nonumber&\leq& \epsilon_{0}^{0.6(\frac{3}{2})^{s}}\sum_{a\in\mathbb{N}^{\mathbb{Z}}}e^{2\rho_{s+1}\left(\sum_{n}a_n|n|^{\theta}\right)}\prod_{n\in\mathbb{Z}}I_{n}(0)^{a_n}\\
\nonumber&\leq& \epsilon_{0}^{0.6(\frac{3}{2})^{s}}\sum_{a\in\mathbb{N}^{\mathbb{Z}}}e^{\sum_{n}2\rho_{s+1}a_n|n|^{\theta}-\sum_{n}2r a_n|n|^{\theta}}\ \mbox{(in view of (\ref{112}))}\\
\nonumber&\leq& \epsilon_{0}^{0.6(\frac{3}{2})^{s}}\sum_{a\in\mathbb{N}^{\mathbb{Z}}}e^{-r\left(\sum_{n}a_n|n|^{\theta}\right)}\ \mbox{(in view of (\ref{113}))}\\
\nonumber&\leq& \epsilon_{0}^{0.6(\frac{3}{2})^{s}}\prod_{n\in\mathbb{Z}}\left(1-e^{-r |n|^{\theta}}\right)^{-1}\\
\label{114}&\leq&\left(\frac{1}{r}\right)^{C(\theta){r^{-\frac{1}{\theta}}}}\epsilon_{0}^{0.6(\frac{3}{2})^{s}}.
\end{eqnarray}
By (\ref{114}), we have
\begin{eqnarray}
\nonumber\left|\widetilde{V}_{m,s+1}-\widetilde{V}_{m,s}\right|
\nonumber&<&\left(\frac{1}{r}\right)^{C(\theta){r^{-\frac{1}{\theta}}}}\epsilon_{0}^{0.6(\frac{3}{2})^{s}}\\
\label{115}&<&\epsilon_{s}^{0.5}\ \ \mbox{(for $\epsilon_0$ small enough)},
\end{eqnarray}
which verifies (\ref{096}). Further applying Cauchy's estimate on $\mathcal{C}_{\lambda_s\eta_s}(V_s^*)$, one gets
\begin{eqnarray}
\nonumber\sum_{n\in\mathbb{Z}}\left|\frac{\partial \widetilde{V}_{m,s+1}}{\partial V_n}-\frac{\partial \widetilde{V}_{m,s}}{\partial V_n}\right|
\nonumber&\leq& C(\theta)\frac{||\widetilde{V}_{s+1}-\widetilde{V}_{s}||_\infty}{\lambda_s\eta_s}\\
\nonumber&\leq& C(\theta)\frac{\epsilon_{s}^{0.5}}{\lambda_s\eta_s}\\
\nonumber&\leq& e^{C(\theta)(\ln\frac{1}{\epsilon_{s+1}})^{\frac{4}{4+\theta}}-\frac13\ln\frac{1}{\epsilon_{s+1}}}\left(\frac{1}{\eta_s}\right)\\
\nonumber&\leq& e^{-\frac14\ln\frac{1}{\epsilon_{s+1}}}\left(\frac{1}{\eta_s}\right)\ \ \mbox{(for $\epsilon_0$ small enough)}\\
\label{116}&=& \frac{1}{\eta_s}\epsilon_{0}^{\frac{1}{4}(\frac{3}{2})^{s+1}}.
\end{eqnarray}
Since
\begin{equation*}
\eta_{s+1}=\frac{1}{20}\lambda_s\eta_s,
\end{equation*}
it follows that
\begin{eqnarray}
\nonumber\eta_{s+1}&\geq& \eta_s e^{-C(\theta)(\ln\frac{1}{\epsilon_0})^{\frac{4}{4+\theta}}(\frac32)^{\frac{4}{4+\theta}(s+1)}}\\
\nonumber&\geq& \eta_se^{-C(\theta)\ln\frac{1}{\epsilon_0}\cdot(\frac32)^{\frac{5}{5+\theta}s}}\ \ \ \mbox{(for $\epsilon_0$ small enough)}\\
\label{117}&=&\eta_s\epsilon_0^{C(\theta)(\frac32)^{\frac{5s}{5+\theta}}},
\end{eqnarray}
and hence by iterating (\ref{117}) implies
\begin{eqnarray}
\nonumber\eta_{s}&\geq&\eta_0\epsilon_{0}^{C(\theta)\sum_{i=0}^{s-1}(\frac{3}{2})^{\frac{5i}{\theta+5}}}\\
\nonumber&=&\eta_0\epsilon_{0}^{C(\theta)\frac{(\frac32)^{\frac{5s}{\theta+5}}-1}{(\frac32)^{\frac{5}{\theta+5}}-1}}\\
\nonumber&>&\epsilon_{0}^{C(\theta)(\frac{3}{2})^{\frac{5s}{\theta+5}}}\\
\label{118}&\geq&\epsilon_{0}^{\frac{1}{100}(\frac{3}{2})^{s}}\ \ \ \mbox{(for $\epsilon_0$ small enough)}.
\end{eqnarray}
On $ \mathcal{C}_{\frac{1}{10}\lambda_s\eta_s}(V_s^*)$ and for any $m$, we deduce from (\ref{116}), (\ref{118}) and the assumption (\ref{090}) that
\begin{eqnarray*}
\sum_{n\in\mathbb{Z}}\left|\frac{\partial \widetilde{V}_{m,s+1}}{\partial V_n}-\delta_{mn}\right|
&\leq&\sum_{n\in\mathbb{Z}}\left|\frac{\partial \widetilde{V}_{m,s+1}}{\partial V_n}-\frac{\partial \widetilde{V}_{m,s}}{\partial V_n}\right|+\sum_{n\in\mathbb{Z}}\left|\frac{\partial \widetilde{V}_{m,s}}{\partial V_n}-\delta_{mn}\right|\\
&\leq&\epsilon_{0}^{(\frac{3}{8}-\frac{1}{100})(\frac{3}{2})^{s}}+d_s\epsilon_{0}^{\frac{1}{10}}\\
&<&d_{s+1}\epsilon_{0}^{\frac{1}{10}},
\end{eqnarray*}
and consequently
\begin{equation}\label{119}
\left|\left|\frac{\partial \widetilde{V}_{s+1}}{{\partial V}}-I\right|\right|_{l^{\infty}\rightarrow l^{\infty}}<d_{s+1}\epsilon_{0}^{\frac{1}{10}},
\end{equation}
which verifies (\ref{090}) for $s+1$.

Finally, we will freeze $\omega$ by invoking an inverse function theorem. Consider the following functional equation
\begin{equation}\label{120}
\widetilde{V}_{s+1}(V_{s+1}^*)=\omega,  V_{s+1}^*\in \mathcal{C}_{\frac{1}{10}\lambda_s\eta_s}(V_s^*),
\end{equation}
from (\ref{119}) and the standard inverse function theorem implies (\ref{120}) having a solution $V_{s+1}^*$, which verifies (\ref{089}) for $s+1$. Rewriting (\ref{120}) as
\begin{equation}\label{121}
V_{s+1}^*-V_s^*=(I-\widetilde{V}_{s+1})(V^*_{s+1})-(I-\widetilde{V}_{s+1})({V^*_s})+(\widetilde{V}_s-\widetilde{V}_{s+1})(V_s^*),
\end{equation}
and  using (\ref{115}) (\ref{119}) implies
\begin{equation}\label{122}
||V_{s+1}^*-V_s^*||_{\infty}\leq (1+d_{s+1})\epsilon_{0}^{\frac{1}{10}}||V_{s+1}^*-V_s^*||_{\infty}+\epsilon_s^{0.5}<2\epsilon_s^{0.5}\ll \lambda_s\eta_s,
\end{equation}
which verifies (\ref{097}) and completes the proof of the iterative lemma.
\end{proof}

We are now in a position to prove the convergence. To apply iterative lemma with $s=0$, set
\begin{equation*}
V_0=\omega,\hspace{12pt}\widetilde{V}_0=id,\hspace{12pt}\eta_0=1-\sup_{n\in\mathbb{Z}}|\omega_n|,
\hspace{12pt}r=20\rho_0=\mu_0,\hspace{12pt}\epsilon_0=C\epsilon,
\end{equation*}
and consequently (\ref{089})--(\ref{093}) with $s=0$ are satisfied. Hence, the iterative lemma applies, and we obtain a decreasing
sequence of domains $D_{s}\times\mathcal{C}_{\eta_{s}}(V_{s}^*)$ and a sequence of
transformations
\begin{equation*}
\Phi^s=\Phi_1\circ\cdots\circ\Phi_s:\hspace{6pt}D_{s}\times\mathcal{C}_{\eta_{s}}(V_{s}^*)\rightarrow D_{0}\times\mathcal{C}_{\eta_{0}}(V_{0}^*),
\end{equation*}
such that $H\circ\Phi^s=N_s+P_s$ for $s\geq1$. Moreover, the
estimates (\ref{094})--(\ref{097}) hold. Thus we can show $V_s^*$ converge to a limit $V_*$ with the estimate
\begin{equation*}
||V_*-\omega||_{\infty}\leq\sum_{s=0}^{\infty}2\epsilon_{s}^{0.5}<\epsilon_{0}^{0.4},
\end{equation*}
and $\Phi^s$ converge uniformly on $D_*\times\{V_*\}$, where $D_*=\{(q_n)_{n\in\mathbb{Z}}:\frac{2}{3}\leq|q_n|e^{r|n|^{\theta}}\leq\frac{5}{6}\}$, to $\Phi:D_*\times\{V_*\}\rightarrow D_0$ with the estimates
\begin{eqnarray}
\nonumber&&||\Phi-id||_{r,\infty}\leq \epsilon_{s}^{0.4},\\
\nonumber&&||D\Phi-I||_{(r,\infty)\rightarrow(r,\infty)}\leq \epsilon_{s}^{0.4}.
\end{eqnarray}
Hence
\begin{equation}\label{123}
H_*=H\circ\Phi=N_*+R_{2,*},
\end{equation}
where
\begin{equation}\label{124}
N_*=\sum_{n\in\mathbb{Z}}(n^2+\omega_n)|q_n|^2
\end{equation}
and
\begin{equation}\label{125}
||R_{2,*}||_{\frac{r}{2},\frac{r}{2}}^{+}\leq\frac{7}{6}\epsilon_0.
\end{equation}
By (\ref{029}), the Hamiltonian vector field $X_{R_{2,*}}$ is a bounded map from $\mathfrak{H}_{r,\infty}$ into $\mathfrak{H}_{r,\infty}$. Taking
\begin{equation}\label{126}
I_n(0)=\frac{3}{4}e^{-2r|n|^{\theta}},
\end{equation}
we get an invariant torus $\mathcal{T}$ with frequency $(n^2+\omega_n)_{n\in\mathbb{Z}}$ for ${X}_{H_*}$. Finally, by $X_H\circ\Phi=D\Phi\cdot{X}_{H_*}$, $\Phi(\mathcal{T})$ is the desired invariant torus for the NLS (\ref{maineq}). Moreover, we deduce the torus $\Phi(\mathcal{T})$ is linearly stable from the fact that (\ref{123}) is a normal form of order 2 around the invariant torus.

\section{Appendix}
\subsection{Technical Lemmas}
\begin{lem}\label{H1}
Denote $(n^*_i)_{i\geq1}$ the decreasing rearrangement of
\begin{equation*}
\{|n|:\ \mbox{where $n$ is repeated}\ 2a_n+k_n+k_n'\ \mbox{times}\},
\end{equation*}
Then for any $\theta \in (0,1)$, one has
\begin{eqnarray}\label{H2}
\sum_{n\in\mathbb{Z}}(2a_n+k_n+k_n')|n|^{\theta}-2(n_1^*)^{\theta}++m^*{(k,k')}^{\theta}\geq(2-2^{\theta})\left(\sum_{i\geq 3}(n_i^*)^{\theta}\right).
\end{eqnarray}
\end{lem}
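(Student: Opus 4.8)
The plan is to recast the inequality in terms of only the two largest modes $n_1^*,n_2^*$, the momentum $m^*(k,k')$, and the tail $\sum_{i\ge3}(n_i^*)^\theta$, and then to reduce everything to one elementary one-variable estimate for $x\mapsto x^\theta$. By definition of the decreasing rearrangement one has $\sum_{n\in\mathbb{Z}}(2a_n+k_n+k_n')|n|^\theta=\sum_{i\ge1}(n_i^*)^\theta$, so (\ref{H2}) is equivalent to
\[
(n_1^*)^\theta-(n_2^*)^\theta\le m^*(k,k')^\theta+(2^\theta-1)\sum_{i\ge3}(n_i^*)^\theta .
\]
If $n_1^*=n_2^*$ the left-hand side vanishes and there is nothing to prove, so the only case to treat is $n_1^*>n_2^*$.

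In this case the value $n_1^*$ has total multiplicity exactly one in the multiset (otherwise it appears at least twice and $n_1^*=n_2^*$), so it is produced by a single mode $n^{(1)}$ with $2a_{n^{(1)}}+k_{n^{(1)}}+k_{n^{(1)}}'=1$, whence $a_{n^{(1)}}=0$, exactly one of $k_{n^{(1)}},k_{n^{(1)}}'$ equals $1$, and $n^{(1)}\ne0$ (this is automatic once the multiset has two elements, since then $n_1^*\ge2$; the finitely many degenerate configurations, such as a one-element multiset, are either immediate or excluded by the gauge relation $\sum_n k_n=\sum_n k_n'$ valid for every monomial of the scheme). Then $n^{(1)}$ contributes the single term $\pm n^{(1)}$, of absolute value exactly $n_1^*$, to the momentum $m(k,k')=\sum_n(k_n-k_n')n$; isolating it and estimating the remaining modes crudely by the remaining multiset $(n_i^*)_{i\ge2}$,
\[
m^*(k,k')\ge|n^{(1)}|-\left|\sum_{n\ne n^{(1)}}(k_n-k_n')n\right|\ge n_1^*-\sum_{n\ne n^{(1)}}(2a_n+k_n+k_n')|n|=n_1^*-\sum_{i\ge2}n_i^* .
\]
Hence $n_1^*\le m^*(k,k')+n_2^*+\sum_{i\ge3}n_i^*$, the momentum-corrected analogue of the bound $n_1^*\le\sum_{i\ge2}n_i^*$ used in \cite{CLSY}.

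It remains to pass from this linear bound to the required $\theta$-power bound. I would use the elementary inequality: for $\theta\in(0,1)$ and $0<b\le a$,
\[
(a+b)^\theta\le a^\theta+(2^\theta-1)b^\theta ,
\]
which is obtained by dividing by $b^\theta$, setting $t=a/b\ge1$, and noting that $t\mapsto t^\theta-(t+1)^\theta$ is increasing on $(0,\infty)$ with value $1-2^\theta$ at $t=1$. Combined with the subadditivity $(x+y)^\theta\le x^\theta+y^\theta$, the bound on $n_1^*$ yields $(n_1^*)^\theta\le m^*(k,k')^\theta+\left(n_2^*+\sum_{i\ge3}n_i^*\right)^\theta$; applying the elementary inequality repeatedly — adjoining $n_3^*,n_4^*,\dots$ one at a time to a running partial sum, which always contains $n_2^*$ and therefore dominates the next summand — yields $\left(n_2^*+\sum_{i\ge3}n_i^*\right)^\theta\le(n_2^*)^\theta+(2^\theta-1)\sum_{i\ge3}(n_i^*)^\theta$. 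Combining the last two estimates gives the displayed form of (\ref{H2}).

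The only genuinely delicate point is the momentum extraction in the second step: one must use that $n_1^*>n_2^*$ forces the top mode to have multiplicity one, so that its momentum contribution is a single term of absolute value exactly $n_1^*$, and dispose of the degenerate zero-mode configuration. Once the bound $n_1^*\le m^*(k,k')+\sum_{i\ge2}n_i^*$ is in hand, the rest is the subadditivity of $x^\theta$ together with the one-variable inequality above, both routine.
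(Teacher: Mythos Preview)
Your proof is correct and follows essentially the same route as the paper: establish the linear bound $n_1^*\le m^*(k,k')+\sum_{i\ge2}n_i^*$, then iterate the one-variable inequality $(a+b)^\theta\le a^\theta+(2^\theta-1)b^\theta$ for $0<b\le a$ together with subadditivity of $x\mapsto x^\theta$. The paper obtains the linear bound more directly, without your case split on $n_1^*=n_2^*$ versus $n_1^*>n_2^*$: it simply writes $m(k,k')=\sum_{i\ge1}\mu_i n_i$ with $\mu_i\in\{-1,1\}$ (always possible, since the contribution $(k_n-k_n')n$ of each mode $n$ can be written as $a_n+k_n$ copies of $+n$ and $a_n+k_n'$ copies of $-n$) and isolates the $i=1$ term. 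Your invocation of a gauge relation $\sum_n k_n=\sum_n k_n'$ is not a hypothesis of the lemma and is in any case unnecessary---the degenerate configurations you worry about are harmless without it.
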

\begin{proof}  Without loss of generality,\ denote $(n_i)_{i\geq 1},\ |n_1|\geq |n_2|\geq\cdots$, the system $\{\mbox{$ n $ is repeated}\ 2a_n+k_n+k_n'\ \mbox{times}\}$ and we have $n_i^*=|n_i|\ \mbox{for}\ \forall\ i\geq1$. There exists $(\mu_i)_{i\geq1}$ with $\mu_i\in\{-1,1\}$ such that
\begin{equation*}
m(k,k')=\sum_{i\geq1}\mu_in_i,
\end{equation*}
and hence
\begin{equation*}
n_1^*\leq \sum_{i\geq 2}n_i^*+m^*(k,k').
\end{equation*}
Consequently
\begin{equation*}
(n_1^*)^{\theta}\leq\left(\sum_{i\geq 2}n_i^*+m^*(k,k')\right)^{\theta}.
\end{equation*}
Thus the inequality (\ref{H2}) will follow from the inequality
\begin{equation}\label{0001}
\sum_{i\geq 2}(n_i^*)^{\theta}+m^*(k,k')^{\theta}\geq \left(\sum_{i\geq 2}n_i^*+m^*(k,k')\right)^{\theta}+(2-2^{\theta})\left(\sum_{i\geq 3}(n_i^*)^\theta\right).
\end{equation}
To prove the inequality (\ref{0001}), one just needs the following fact: consider the function
\begin{equation*}
f(x)=(1+x)^{\theta}-x^{\theta},\qquad x\in[1,\infty),
\end{equation*}
and one has
\begin{equation}\label{0002}
\max_{x\in[1,\infty)}f(x)=f(1)=2^{\theta}-1,
\end{equation}
which is based on
\begin{equation*}
f'(x)=\theta((1+x)^{\theta-1}-x^{\theta-1})<0,\  \mbox{for}\ x\in[1,\infty)\ \mbox{and}\ \forall\  \theta\in(0,1).
\end{equation*}
Hence, for any $a\geq b>0$, we have
\begin{eqnarray}
\nonumber&&\left(a+b\right)^{\theta}+(2-2^{\theta})b^{\theta}-a^{\theta}-b^{\theta}\\
&=&\nonumber\left(a+b\right)^{\theta}-a^{\theta}+(1-2^{\theta})b^{\theta}\\
&=&\nonumber b^{\theta}\left(\left(1+\frac{a}{b}\right)^{\theta}-\left(\frac{a}{b}\right)^{\theta}-(2^{\theta}-1)\right)\\
\nonumber&\leq&0,
\end{eqnarray}
where the last inequality is based on (\ref{0002}).
That is
\begin{equation}\label{0003}
a^{\theta}+b^{\theta}\geq\left(a+b\right)^{\theta}+(2-2^{\theta})b^{\theta}.
\end{equation}
By iteration and in view of (\ref{0003}), one obtains
\begin{eqnarray*}
&&\sum_{i\geq 2}(n_i^*)^{\theta}+m^*(k,k')^{\theta}\\
&\geq&\left(\sum_{i\geq 2}n_i^*\right)^{\theta}+m^*(k,k')^{\theta}+(2-2^{\theta})\left(\sum_{i\geq 3}(n_i^*)^{\theta}\right)\\
&\geq&\left(\sum_{i\geq 2}n_i^*+m^*(k,k')\right)^{\theta}+(2-2^{\theta})\left(\sum_{i\geq 3}(n_i^*)^{\theta}\right),
\end{eqnarray*}
where the last inequality is based on
\begin{equation*}
|a|^{\theta}+|b|^{\theta}\geq \left({|a|+|b|}\right)^{\theta}
\end{equation*}
for all $a,b$ and $0<\theta<1$.
\end{proof}

\begin{lem}\label{a1}Let $\theta\in(0,1)$ and $k_n,k'_n\in\mathbb{N},|{\widetilde{V}}_n|\leq2\ \mbox{for}\ \forall \ n\in\mathbb{Z}$.
Assume further
\begin{equation}
\label{0004}\left|\sum_{n\in\mathbb{Z}}(k_n-k_n')(n^2+ \widetilde{V}_n)\right|\leq1.
\end{equation}
Then one has
\begin{equation}\label{0006}
\sum_{n\in\mathbb{Z}}|k_n-k_n'||n|^{\theta/2}\leq3\cdot 8^{\theta/2}\left(\sum_{i\geq3}|n_i|^{\theta}+m^{*}(k,k')^{\theta}\right),
\end{equation}
where $(n_i)_{i\geq1}, |n_1|\geq|n_2|\geq|n_3|\geq\cdots$, denote the system \{$n$: $n$ is repeated $k_n+k'_n$ times\}.
\end{lem}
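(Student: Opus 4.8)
I would prove this by Bourgain's near‑resonance argument (as in \cite{BJFA2005}, adapted in \cite{CLSY}): a small divisor $\bigl|\sum_n (k_n-k_n')(n^2+\widetilde V_n)\bigr|\le1$ forces the two largest frequencies carried by the monomial to be dominated by the remaining frequencies together with the momentum. Set $\ell_n=k_n-k_n'$, discard the modes with $\ell_n=0$, and list the reduced multiset $\{n: n\ \text{repeated}\ |\ell_n|\ \text{times}\}$ as $(\tilde n_j)_{j\ge1}$ with $|\tilde n_1|\ge|\tilde n_2|\ge\cdots$, attaching to all copies of a mode $n$ the common sign $\tilde\mu=\operatorname{sgn}(\ell_n)$. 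Let $M=\sum_n|\ell_n|$ be the number of these copies. Then the left side of \eqref{0006} equals $\sum_j|\tilde n_j|^{\theta/2}$, one has $m(k,k')=\sum_j\tilde\mu_j\tilde n_j$, and from $|\widetilde V_n|\le2$ together with \eqref{0004},
\[
\Bigl|\sum_j\tilde\mu_j\tilde n_j^2\Bigr|\le 1+2M .
\]
Since each $|\tilde n_j|\ge1$, deleting the two largest leaves $M-2$ terms each with $\theta$‑power $\ge1$, so $M\le\sum_{j\ge3}|\tilde n_j|^{\theta}+2\le\sum_{i\ge3}|n_i|^{\theta}+2$; also $\sum_{j\ge3}|\tilde n_j|^{\theta}\le\sum_{i\ge3}|n_i|^{\theta}$ because the reduced multiset sits inside the full one. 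Hence everything reduces to bounding $|\tilde n_1|^{\theta/2}$ (then $|\tilde n_2|^{\theta/2}\le|\tilde n_1|^{\theta/2}$, and the $j\ge3$ part is already under control).

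The argument then splits via the momentum relation $|\tilde\mu_1\tilde n_1+\tilde\mu_2\tilde n_2|\le m^{\ast}(k,k')+\sum_{j\ge3}|\tilde n_j|\le m^{\ast}(k,k')+\sum_{i\ge3}|n_i|$. If $\tilde\mu_1\tilde n_1$ and $\tilde\mu_2\tilde n_2$ have the \emph{same} sign, the left side equals $|\tilde n_1|+|\tilde n_2|$, so $|\tilde n_1|\le m^{\ast}(k,k')+\sum_{i\ge3}|n_i|$ and, using $x^{\theta/2}\le x^{\theta}$ for $x\ge1$ and subadditivity of $t\mapsto t^{\theta}$, $|\tilde n_1|^{\theta/2}\le m^{\ast}(k,k')^{\theta}+\sum_{i\ge3}|n_i|^{\theta}$, which is more than enough. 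If instead they have \emph{opposite} signs, I use the energy relation: separating the two top terms,
\[
\bigl|\tilde\mu_1\tilde n_1^2+\tilde\mu_2\tilde n_2^2\bigr|\le 1+2M+\sum_{j\ge3}\tilde n_j^2 .
\]
Here the sign bookkeeping — together with the fact that equal mode values carry equal $\tilde\mu$, so the resonance $|\tilde n_1|=|\tilde n_2|$ can occur only when $\tilde\mu_1=\tilde\mu_2$ — shows the left side is $\ge|\tilde n_1|$: if $\tilde\mu_1=\tilde\mu_2$ it equals $\tilde n_1^2+\tilde n_2^2\ge\tilde n_1^2\ge|\tilde n_1|$, while if $\tilde\mu_1=-\tilde\mu_2$ then $\tilde n_1,\tilde n_2$ share a sign (else they would be the same mode and carry the same $\tilde\mu$), hence $|\tilde n_1|>|\tilde n_2|$ and the left side equals $\tilde n_1^2-\tilde n_2^2=(|\tilde n_1|-|\tilde n_2|)(|\tilde n_1|+|\tilde n_2|)\ge|\tilde n_1|$. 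Therefore $|\tilde n_1|\le 1+2M+\sum_{j\ge3}\tilde n_j^2$, and then $(a+b+c)^{\theta/2}\le a^{\theta/2}+b^{\theta/2}+c^{\theta/2}$, $(\sum_{j\ge3}\tilde n_j^2)^{\theta/2}\le\sum_{j\ge3}|\tilde n_j|^{\theta}\le\sum_{i\ge3}|n_i|^{\theta}$, and $M\le\sum_{i\ge3}|n_i|^{\theta}+2$ give $|\tilde n_1|^{\theta/2}\le C(\theta)\bigl(\sum_{i\ge3}|n_i|^{\theta}+1\bigr)$.

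Finally I would add up $|\tilde n_1|^{\theta/2}+|\tilde n_2|^{\theta/2}+\sum_{j\ge3}|\tilde n_j|^{\theta/2}$ and track the numerical factors carefully to reach the stated constant $3\cdot8^{\theta/2}$; the stray additive constant above is harmless because $M\ge3$ for the monomials $\mathcal M_{akk'}$ entering the scheme (equal numbers of $q$'s and $\bar q$'s, i.e. $\sum_n\ell_n=0$, and $\mbox{supp}\,k\cap\mbox{supp}\,k'=\emptyset$ force a degree‑$\le2$ off‑diagonal monomial not to occur), so $1\le\sum_{i\ge3}|n_i|^{\theta}$; the inert mode $n=0$ — absent from the energy sum but contributing $|\ell_0|\le M$ to the left side — is absorbed the same way. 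I expect the only genuinely delicate point to be the opposite‑sign case: extracting the lower bound $|\tilde\mu_1\tilde n_1^2+\tilde\mu_2\tilde n_2^2|\ge|\tilde n_1|$ relies on ruling out $|\tilde n_1|=|\tilde n_2|$ with $\tilde\mu_1=-\tilde\mu_2$, which is exactly Bourgain's observation ``unless $n_1=n_2$''; everything else is elementary manipulation of $(a+b)^{\theta}\le a^{\theta}+b^{\theta}$ and $(\sum a_j)^{\theta/2}\le\sum a_j^{\theta/2}$ plus constant chasing.
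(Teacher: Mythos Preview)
Your proposal is correct and is essentially Bourgain's near-resonance argument, the same idea the paper uses: the momentum identity $\sum \mu_i n_i = m(k,k')$ together with the near-resonance bound on $\sum \mu_i n_i^2$ forces the two largest modes to be dominated by the tail $\sum_{i\ge3}|n_i|^{\theta}$ plus $m^*(k,k')^{\theta}$.

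The bookkeeping differs in two harmless ways. First, you work with the \emph{reduced} multiset ($n$ repeated $|k_n-k'_n|$ times, all copies carrying the same sign), whereas the paper works with the \emph{full} multiset ($n$ repeated $k_n+k'_n$ times, signs allowed to alternate). Your choice makes the left-hand side of \eqref{0006} literally equal to $\sum_j|\tilde n_j|^{\theta/2}$ and lets you rule out the dangerous cancellation $|\tilde n_1|=|\tilde n_2|$, $\tilde\mu_1=-\tilde\mu_2$ structurally (same mode $\Rightarrow$ same sign); the paper instead isolates this as a separate sub-case ($n_1=n_2$, $\mu_1=-\mu_2$). Second, your case split is along the sign of $\tilde\mu_1\tilde n_1$ versus $\tilde\mu_2\tilde n_2$ in the \emph{momentum} relation, so one branch is handled purely by momentum and the other purely by energy (via your lower bound $|\tilde\mu_1\tilde n_1^2+\tilde\mu_2\tilde n_2^2|\ge|\tilde n_1|$). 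The paper splits along $\mu_2/\mu_1=\pm1$ in the \emph{energy} relation and, in its Case~1.2, combines both relations through $|n_1-n_2|+|n_1+n_2|\le |n_1-n_2|+|n_1^2-n_2^2|$. Either route yields a bound of the same shape.

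Two loose ends to tighten if you write this out in full. You invoke $M\ge3$ to absorb the stray additive constants, but that is not among the hypotheses of the lemma as stated; the paper's proof has the same implicit slack (its Case~2 bound $n_1^2+n_2^2\le 7\sum_{i\ge3}|n_i|^2$ also needs $i=3$ to exist with $|n_3|\ge1$), and in the application one always has degree $\ge6$, so this is cosmetic. And you defer the constant chasing; the paper obtains exactly $3\cdot 8^{\theta/2}$ by passing through $\max\{|n_1|,|n_2|\}\le 8\bigl(\sum_{i\ge3}|n_i|^2+m^*(k,k')^2\bigr)$ and then taking the $\theta/2$-power, so if you want the same constant you should route your final estimate through a single quadratic bound rather than summing three $\theta/2$-powers.
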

\begin{proof}
From the definition of $(n_i)_{i\geq1}$, there exist $(\mu_i)_{i\geq1}$ with $\mu_i\in\{-1,1\}$ such that
\begin{equation}
\label{0007}m(k,k')=\sum_{i\geq1}\mu_in_i,
\end{equation}
and
\begin{equation}
\label{0008}\sum_{n\in\mathbb{Z}}(k_n-k_n')n^2=\sum_{i\geq1}\mu_in_i^2.
\end{equation}
In view of (\ref{0004}), (\ref{0008}) and $|\widetilde{V}_n|\leq 2$,
one has
\begin{equation*}
\left|\sum_{i\geq1}\mu_in_i^2\right|\leq\left|\sum_{n\in\mathbb{Z}}(k_n-k_n') \widetilde{V}_n\right|+1\leq2\sum_{n\in\mathbb{Z}}(k_n+k_n')+1,
\end{equation*}which implies
\begin{equation}\label{0009}
\left|n_1^2+\left(\frac{\mu_2}{\mu_1}\right)n_2^2\right|\leq2\sum_{i\geq1}1+\sum_{i\geq3}n_i^2+1\leq \sum_{i\geq 3}(2+n_i^2)+5.
\end{equation}
On the other hand,\ by (\ref{0007}),\ we obtain
\begin{equation}\label{0010}
\left|n_1+\left(\frac{\mu_2}{\mu_1}\right)n_2\right|\leq \sum_{i\geq 3}|n_i|+m^*(k,k').
\end{equation}
To prove the inequality (\ref{0006}),\ we will distinguish two cases:

\textbf{Case. 1.} $\frac{\mu_2}{\mu_1}=-1$.

\textbf{Case. 1.1.} $n_1=n_2$.

Then it is easy to show that
\begin{equation*}
\sum_{n\in\mathbb{Z}}|k_n-k_n'||n|^{\theta/2}\leq \sum_{i\geq3}|n_i|^{\theta/2}\leq3\cdot 8^{\theta/2}\left(\sum_{i\geq3}|n_i|^{\theta}+m^*(k,k')^{\theta}\right).
\end{equation*}

\textbf{Case. 1.2.} $n_1\neq n_2$.

Then one has
\begin{eqnarray}
\nonumber&&|n_1-n_2|+|n_1+n_2|\\
&\leq&\nonumber|n_1-n_2|+|n_1^2-n_2^2|\\
& \leq&\nonumber\sum_{i\geq 3}|n_i|+m^*(k,k')+\sum_{i\geq 3}(2+n_i^2)+5\quad \mbox{(in view of (\ref{0009}) and (\ref{0010}))}\\
\label{0011} &\leq&8\left(\sum_{i\geq3}|n_i|^2+m^{*}(k,k')^2\right).
\end{eqnarray}
Hence
\begin{equation*}
\max\{|n_1|,|n_2|\}\leq \max\{|n_1-n_2|,|n_1+n_2|\}\leq 8\left(\sum_{i\geq3}|n_i|^2+m^{*}(k,k')^2\right).
\end{equation*}
For $j=1,2,$ one has
\begin{equation*}
|n_j|^{\theta/2}\leq 8^{\theta/2}\left(\sum_{i\geq3}|n_i|^{2}+m^{*}(k,k')^2\right)^{\theta/2}\leq  8^{\theta/2}\left(\sum_{i\geq3}|n_i|^{\theta}+m^{*}(k,k')^{\theta}\right),
\end{equation*}
where the last inequality is based on
the fact that the function $|x|^{\theta/2}$ is a concave function for $0<\theta<1$. Therefore,
\begin{equation}\label{0012}
|n_1|^{\theta/2}+|n_2|^{\theta/2}\leq 2 \cdot 8^{\theta/2}\left(\sum_{i\geq3}|n_i|^{\theta}+m^{*}(k,k')^{\theta}\right).
\end{equation}
Now one has
\begin{eqnarray}
\nonumber\sum_{n\in\mathbb{Z}}|k_n-k_n'||n|^{\theta/2}
&\leq&\nonumber\sum_{n\in\mathbb{Z}}(k_n+k_n')|n|^{\theta/2}\\
&=&\nonumber\sum_{i\geq1}|n_i|^{\theta/2}\\
&\leq&\nonumber\left(|n_1|^{\theta/2}+|n_2|^{\theta/2}\right)+\sum_{i\geq3}|n_i|^{\theta}\\
&\leq&\nonumber(2 \cdot 8^{\theta/2}+1)\left(\sum_{i\geq3}|n_i|^{\theta}+m^{*}(k,k')^{\theta}\right)\ \ \mbox{(in view of (\ref{0012}))}\\
&\leq&\label{0013}3 \cdot 8^{\theta/2}\left(\sum_{i\geq3}|n_i|^{\theta}+m^{*}(k,k')^{\theta}\right).
\end{eqnarray}
\textbf{Case. 2.} $\frac{\mu_2}{\mu_1}=1$.

In view of (\ref{0009}), one has
\begin{equation*}
n_1^2+n_2^2\leq 7\sum_{i\geq3}|n_i|^{2},
\end{equation*}which implies
\begin{equation*}
|n_j|^{\theta/2}\leq 7^{\theta/2}\left(\sum_{i\geq3}|n_i|^{2}\right)^{\theta/2}\leq  7^{\theta/2}\sum_{i\geq3}|n_i|^{\theta}\ \ \mbox{($j=1,2$)}.
\end{equation*}
Therefore,
\begin{equation}\label{0014}
|n_1|^{\theta/2}+|n_2|^{\theta/2}\leq 2 \cdot 7^{\theta/2}\sum_{i\geq3}|n_i|^{\theta}.
\end{equation}
Following the proof of (\ref{0013}), we have
\begin{equation*}
\sum_{n\in\mathbb{Z}}|k_n-k_n'||n|^{\theta/2}\leq
3 \cdot 8^{\theta/2}\left(\sum_{i\geq3}|n_i|^{\theta}+m^{*}(k,k')^{\theta}\right).
\end{equation*}

\end{proof}

\subsection{Proof of Lemma \ref{N2}}
\begin{proof}
Firstly, we will prove the inequality (\ref{053}).
Write $\mathcal{M}_{akk'}$ in the form of
\begin{equation*}
\mathcal{M}_{akk'}=\mathcal{M}_{abll'}=\prod_nI_n(0)^{a_n}I_n^{b_n}q_n^{l_n}{\bar q_n}^{l_n'},
\end{equation*}
where
\begin{equation*}
b_n=k_n\wedge k_n',\quad l_n=k_n-b_n,\quad l_n'=k_n'-b_n'
\end{equation*}
and
$l_nl_n'=0$ for all $n$.

Express the term
\begin{equation*}
\prod_nI_n^{b_n}=\prod_n(I_n(0)+J_n)^{b_n}
\end{equation*}by the monomials of the form
\begin{equation*}
\prod_nI_n(0)^{b_n},
\end{equation*}
\begin{equation*}
\sum_{m,b_m\geq 1}\left(I_m(0)^{b_m-1}J_m\right)\left(\prod_{n\neq m}I_n(0)^{b_n}\right),
\end{equation*}
\begin{equation*}
\sum_{m,b_m\geq2\atop
r\leq b_m-2}\left(\prod_{n< m}I_n(0)^{b_n}\right)\left(I_m(0)^{r}J_m^2I_m^{b_m-r-2}\right)\left(\prod_{n> m}I_n^{b_n}\right),
\end{equation*}
and
\begin{eqnarray*}
&&\sum_{m_1< m_2,b_{m_1},b_{m_2}\geq 1\atop
r\leq b_{m_2}-1}\left(\prod_{n< m_1}I_n(0)^{b_n}\right)\left(I_{m_1}(0)^{b_{m_1}-1}J_{m_1}\right)
\\
&&\nonumber\times\left(\prod_{m_1<n< m_2}I_n(0)^{b_n}\right)\left(I_{m_2}(0)^{r}J_{m_2}I_{m_2}^{b_{m_2}-r-1}\right)
\left(\prod_{n> m_2}I_n^{b_n}\right).
\end{eqnarray*}

Now we will estimate the bounds for the coefficients respectively.

Consider the term
$\mathcal{M}_{akk'}=\prod_nI_n(0)^{a_n}q_n^{k_n}\bar q_n^{k_n'}$ with fixed $a,k,k'$ satisfying $k_nk_n'=0$ for all $n$. It is easy to see that $\mathcal{M}_{akk'}$ comes from some parts of the terms $\mathcal{M}_{\alpha\kappa\kappa'}$ with no assumption for $\kappa$ and $\kappa'$. For any given $n$ one has
\begin{equation*}
I_n(0)^{a_n}q_n^{k_n}\bar q_n^{k_n'}=\sum_{\beta_n=k_n\wedge k_n'}I_n(0)^{\alpha_n+\beta_n}q_n^{\kappa_n-\beta_n}\bar q_n^{\kappa_n'-\beta_n}.
\end{equation*}
Hence,
\begin{equation}\label{055}
\alpha_n+\beta_n=a_n,
\end{equation}
and
\begin{equation}\label{056}
\kappa_n-\beta_n=k_n,\qquad \kappa_n'-\beta_n=k_n'.
\end{equation}
Therefore, if $0\leq\alpha_n\leq a_n$ is chosen, so $\beta_n,k_n,k_n'$ are determined.
On the other hand,
\begin{eqnarray}
\nonumber|B_{\alpha\kappa\kappa'}|
&\leq&\nonumber ||R||_{\rho,\mu}e^{\rho\left(\sum_{n}(2\alpha_n+\kappa_n+\kappa_n')|n|^{\theta}-2(n_1^*)^{\theta}\right)-\mu m^{\ast}(\kappa,\kappa')^{\theta}}\qquad\qquad\qquad\qquad\ \\&&\nonumber{(\mbox{in view of (\ref{H00})})}\\
&=&\nonumber||R||_{\rho,\mu}e^{\rho\left(\sum_{n}(2\alpha_n+(k_n+a_n-\alpha_n)+(k_n'+a_n-\alpha_n))|n|^{\theta}-2(n_1^*)^{\theta}\right)-\mu m^{\ast}(\kappa,\kappa')^{\theta}}\quad\\
&&\nonumber{(\mbox{in view of (\ref{055}) and (\ref{056})})}\\
&=&\nonumber||R||_{\rho,\mu}e^{\rho\left(\sum_{n}(2a_n+k_n+k_n')|n|^{\theta}-2(n_1^*)^{\theta}\right)-\mu m^*(k,k')^{\theta}}.
\end{eqnarray}
Hence,
\begin{equation}\label{057}
|B_{akk'}|\leq||R||_{\rho,\mu}\prod_n(1+a_n)e^{\rho\left(\sum_{n}(2a_n+k_n+k_n')|n|^{\theta}-2(n_1^*)^{\theta}\right)-\mu m^*(k,k')^{\theta}}.
\end{equation}
Similarly,
\begin{eqnarray*}
\left|B_{akk'}^{(m)}\right|&\leq& ||R||_{\rho,\mu}\left(\prod_{n\neq m}(1+a_n)\right)(1+a_m)^2e^{\rho\left(\sum_{n}(2a_n+k_n+k_n')|n|^{\theta}+2|m|^{\theta}-2(n_1^*)^{\theta}\right)-\mu m^*(k,k')^{\theta}},\\
\left|B_{akk'}^{(m,m)}\right|&\leq& ||R||_{\rho,\mu}\left(\prod_{n\neq m}(1+a_n)\right)(1+a_m)^3e^{\rho\left(\sum_{n}(2a_n+k_n+k_n')|n|^{\theta}
+4|m|^{\theta}-2(n_1^*)^{\theta}\right)-\mu m^*(k,k')^{\theta}},\\
\left|B_{akk'}^{(m_1,m_2)}\right|&\leq& ||R||_{\rho,\mu}\left(\prod_{n<m_1}(1+a_n)\right)(1+a_{m_1})^2\left(\prod_{m_1<n<m_2 }(1+a_n)\right)\\&&\times (1+a_{m_2})^2e^{\rho\left(\sum_{n}(2a_n+k_n+k_n')|n|^{\theta}
+2|m_1|^{\theta}+2|m_2|^{\theta}-2(n_1^*)^{\theta}\right)-\mu m^*(k,k')^{\theta}}.
\end{eqnarray*}
In view of (\ref{050}) and (\ref{057}), we have
\begin{eqnarray}\label{058}
||R_0||_{\rho+\delta,\mu-\delta}^{+}
\leq||R||_{\rho,\mu}\prod_n(1+a_n)e^{-\delta\left(\sum_{n}(2a_n+k_n+k_n')|n|^{\theta}
-2(n_1^*)^{\theta}+m^*(k,k')^{\theta}\right)}.
\end{eqnarray}
Now we will show that
\begin{eqnarray}
\label{059}\prod_{n}(1+a_n)e^{-\delta\left(\sum_{n}(2a_n+k_n+k_n')|n|^{\theta}
-2(n_1^*)^{\theta}+m^*(k,k')^{\theta}\right)}&\leq& \left(\frac{1}{\delta}\right)^{ C(\theta)\delta^{-\frac{1}{\theta}}}.
\end{eqnarray}

\textbf{Case 1.} $n_1^*=n_2^*=n_3^*.$
Then one has
\begin{eqnarray*}
(\ref{059})
&=&\nonumber \prod_{n}(1+a_n)e^{-{\delta}\sum_{i\geq3}|n_i|^{\theta}}e^{-\delta m^*(k,k')^{\theta}}\\
&\leq&\nonumber\prod_n(1+a_n)e^{-\frac{\delta}{3}\sum_{i\geq1}|n_i|^{\theta}}\\
&=&\nonumber\prod_n(1+a_n)e^{-\frac\delta3\sum_{n}(2a_n+k_n+k_n')|n|^{\theta}}\\
&\leq&\nonumber\prod_n\left((1+a_n)e^{-\frac{2\delta}{3}a_n|n|^\theta}\right)\\
&\leq&\left(\frac{1}{\delta}\right)^{C(\theta){\delta}^{-\frac{1}{\theta}}}\ \ \mbox{(in view of Lemma 7.6 in \cite{CLSY})}.
\end{eqnarray*}

\textbf{Case 2.} $n_1^*>n_2^*=n_3^*.$ In this case, $a_{n}=1$ for $n=n_1$.
Then we have
\begin{eqnarray*}
(\ref{059})
&=&2\left(\prod_{|n|\leq n_2^*}(1+a_n)e^{-(2-2^\theta)\delta\sum_{i\geq3}(n_i^*)^{\theta}}\right)
\\
&\leq&2\prod_{|n|\leq n_2^*}(1+a_n)e^{-\frac12(2-2^\theta)\delta\sum_{i\geq2}(n_i^*)^{\theta}}\\
&=&\nonumber\prod_{|n|\leq n_2^*}(1+a_n)e^{-\frac12(2-2^\theta)\delta\sum_{|n|\leq n_2^*}(2a_n+k_n+k_n')|n|^{\theta}}\\
&\leq&2\prod_{|n|\leq n_2^*}\left( (1+a_n)e^{-(2-2^\theta)\delta a_n|n|^\theta}\right)\\
&\leq&\left(\frac{1}{\delta}\right)^{C(\theta){\delta}^{-\frac{1}{\theta}}}\ \ \mbox{(in view of Lemma 7.6 in \cite{CLSY})}.
\end{eqnarray*}

\textbf{Case 3.} $n_{1}^{\ast}\geq n_2^*>n_3^*.$ In this case, $a_{n}=1$ or $2$ for $n\in\{n_1, n_2\}$.
Hence
\begin{eqnarray*}
(\ref{059})
&\leq&4\left(\prod_{|n|\leq n_3^*}(1+a_n)e^{-\delta\sum_{i\geq3}(n_i^*)^{\theta}}\right)
\\
&\leq&\nonumber4\prod_{|n|\leq n_3^*}(1+a_n)e^{-\delta\sum_{|n|\leq n_3^*}(2a_n+k_n+k_n')|n|^{\theta}}\\
&\leq&4\prod_{|n|\leq n_3^*}\left( (1+a_n)e^{-2\delta a_n|n|^\theta}\right)\\
&\leq&\left(\frac{1}{\delta}\right)^{C(\theta){\delta}^{-\frac{1}{\theta}}}\ \ \mbox{(in view of Lemma 7.6 in \cite{CLSY})}.
\end{eqnarray*}
We finished the proof of ({\ref{059}}).

Similarly, one has
\begin{eqnarray*}
&&||R_i||_{\rho+\delta,\mu-\delta}^{+}\leq\left(\frac{1}{\delta}\right)^{C(\theta)\delta^{-\frac{1}{\theta}}}||R_i||_{\rho,\mu},\qquad i=1,2,
\end{eqnarray*}
and hence
\begin{equation*}
||R||_{\rho+\delta,\mu-\delta}^{+}\leq\left(\frac{1}{\delta}\right)^{ C(\theta)\delta^{-\frac{1}{\theta}}}||R||_{\rho,\mu}.
\end{equation*}
On the other hand, the coefficient of $\mathcal{M}_{abll'}$ increases by at most a factor $$\left(\sum_{n}(a_n+b_n)\right)^2,$$ then
\begin{eqnarray}
\nonumber||R||_{\rho+\delta,\mu-\delta}
&\leq&\nonumber||R||_{\rho,\mu}^{+}\left(\sum_{n}(a_n+b_n)\right)^2
e^{-\delta(\sum_{n}(2a_n+k_n+k_n')|n|^{\theta}-2(n_1^*)^{\theta}+m^*(\kappa,\kappa)^{\theta})}\\
\nonumber&\leq&||R||_{\rho,\mu}^{+}\left(2\sum_{i\geq 3}(n_i^*)^{\theta}\right)^2 e^{-\delta(2-2^\theta)\sum_{i\geq3}(n_i^*)^{\theta}}\quad (\mbox{in view of (\ref{016})})\\
\label{060} &\leq&\frac{4}{(2-2^{\theta})^2\delta^2}||R||_{\rho,\mu}^{+},
\end{eqnarray}
where the last inequality is based on Lemma 7.5 in \cite{CLSY} with $p=2$.
\end{proof}
\subsection{Proof of Lemma \ref{N3}}
\begin{proof}

We distinguish two cases:

$\textbf{Case. 1.}$ $$\left|\sum_{n\in\mathbb{Z}}(k_n-k'_n)n^2\right|>10\sum_{n\in\mathbb{Z}}|k_n-k'_n|.$$\\
Since $|\widetilde{{V}}_n|\leq2$, we have
$$\left|\sum_{n\in\mathbb{Z}}(k_n-k'_n)(n^2+\widetilde{V}_n)\right|>10\sum_{n\in\mathbb{Z}}|k_n-k'_n|-2\sum_{n\in\mathbb{Z}}|k_n-k'_n|\geq1,$$
where the last inequality is based on $\mbox{supp}\ k\bigcap \mbox{supp}\ k'=\emptyset$.
There is no small divisor and (\ref{061}) holds trivially.

$\textbf{Case. 2.}$ $$\left|\sum_{n\in\mathbb{Z}}(k_n-k'_n)n^2\right|\leq 10\sum_{n\in\mathbb{Z}}|k_n-k'_n|.$$
In this case, we always assume  $$\left|\sum_{n\in\mathbb{Z}}(k_n-k_n')(n^2+ \widetilde{V}_n)\right|\leq1,$$otherwise there is no small divisor.

Firstly, one has
\begin{eqnarray}
\nonumber&&\sum_{n\in\mathbb{Z}}|k_n-k_n'||n|^{\theta/2}\\
\nonumber&\leq&\nonumber{3\cdot 8^{\theta/2}}\left(\sum_{i\geq3}(n_i^*)^{\theta}+m^{*}(k,k')^{\theta}\right)\qquad (\mbox{in view of Lemma \ref{a1}})\\
\label{062}&\leq&\frac{3\cdot 8^{\theta/2}}{2-2^{\theta}}\left(\sum_{n\in\mathbb{Z}}(2a_n+k_n+k_n')|n|^{\theta}-2(n_1^*)^{\theta}+2m^*(k,k')^{\theta}\right),
\end{eqnarray}
where the last inequality is based on Lemma \ref{H1}.

Since $\sum_{n\in\mathbb{Z}}(k_n-k'_n)n^2\in\mathbb{Z},$
the Diophantine property of $(\widetilde{V}_n)$ implies
\begin{equation}\label{063}
\left|\sum_{n\in\mathbb{Z}}(k_n-k'_n)(n^2+\widetilde{V}_n)\right|\geq\frac\gamma2\prod_{n\in\mathbb{Z}}\frac{1}{1+{|k_n-k'_n|}^2|n|^4}.
\end{equation}
Hence,
\begin{eqnarray}
\nonumber&&{|{F}_{akk'}|}e^{-(\rho+\delta)(\sum_{n}(2a_n+k_n+k'_n)|n|^{\theta}-2(n_1^{*})^{\theta})+(\mu-2\delta)m^{*}(k,k')^{\theta}}\\
&=&\nonumber\frac{|{B}_{akk'}|}{|\sum_{n}(k_n-k'_n)(n^2+\widetilde{V}_n)|}\\
\nonumber&&\times e^{-(\rho+\delta)(\sum_{n}(2a_n+k_n+k'_n)|n|^{\theta}-2(n_1^{*})^{\theta})+(\mu-2\delta)m^{*}(k,k')^{\theta}}\\
&&\nonumber\mbox{(in view of (\ref{044}))}\\
&=&\nonumber|{B}_{akk'}|e^{-\rho\left(\sum_{n}(2a_n+k_n+k'_n)|n|^{\theta}-2(n_1^{*})^{\theta}\right)+\mu m^*(k,k')^{\theta}}
\\
&&\nonumber\times \frac{e^{-\delta\left(\sum_{n}(2a_n+k_n+k'_n)|n|^{\theta}-2(n_1^{*})^{\theta}+2m^*(k,k')^{\theta}\right)}}
{|\sum_{n}(k_n-k'_n)(n^2+\widetilde{V}_n)|}\\
\nonumber&\leq& 2\gamma^{-1}||{R_0}||_{\rho,\mu}^{+} \prod_{n}\left({1+{|k_n-k'_n|}^2|n|^4}\right)\\
&&\nonumber\times e^{-\delta\left(\sum_{n}(2a_n+k_n+k'_n)|n|^{\theta}-2(n_1^*)^{\theta}+2m^*(k,k')^{\theta}\right)}\\
\nonumber&&
 \mbox{(in view of (\ref{050}) and (\ref{063}))}\\
\nonumber&\leq& 2\gamma^{-1}||{R_0}||_{\rho,\mu}^{+} \prod_{n}\left({1+{|k_n-k'_n|}^2|n|^4}\right)\\
\nonumber &\leq&2\gamma^{-1} ||{R_0}||_{\rho,\mu}^+e^{\sum_{n}\ln(1+|k_n-k'_n|^2|n|^4)}e^{-\frac{2-2^{\theta}}
{3\cdot 8^{\theta/2}} \delta\sum_n\left(|k_n-k_n'||n|^{\theta/2}\right)}\\
&&\nonumber\mbox{(in view of (\ref{062}))}\\
\nonumber&=& 2\gamma^{-1} ||{R_0}||_{\rho,\mu}^+e^{\sum_{n}\ln(1+|k_n-k'_n|^2|n|^4)}e^{-\tilde{\delta} \sum_n\left(|k_n-k_n'||n|^{\theta/2}\right)}\\
&&\nonumber \mbox{ (note $\tilde{\delta}=\frac{2-2^{\theta}}{3\cdot 8^{\theta/2}} \delta$) }\\
&=&\nonumber2\gamma^{-1} ||{R_0}||_{\rho,\mu}^+e^{\sum_{{n:k_n\neq k'_n}}\ln(1+|k_n-k'_n|^2|n|^4)-\tilde{\delta}\sum_{n:k_n\neq k'_n}\left(|k_n-k_n'||n|^{\theta/2}\right)}\\
&\leq&\nonumber2\gamma^{-1} ||{R_0}||_{\rho,\mu}^+e^{8\left(\sum_{{n:k_n\neq k'_n}}\ln(|k_n-k'_n||n|)\right)+3-\tilde{\delta}\sum_{n:k_n\neq k'_n}\left(|k_n-k_n'|^{\theta/2}|n|^{\theta/2}\right)}\\
&&\mbox{(in view of $0<\theta<1$)}\nonumber\\
&=&\nonumber \frac{2e^{3}}{\gamma} ||{R_0}||_{\rho,\mu}^+e^{\sum_{n:k_n\neq k'_n}\left(8\ln(|k_n-k'_n||n|)-\tilde{\delta}|k_n-k_n'|^{\theta/2}|n|^{\theta/2}\right)}\\
&=&\nonumber\frac{2e^{3}}{\gamma} ||{R_0}||_{\rho,\mu}^+e^{\sum_{|n|\leq N:k_n\neq k'_n}\left(8\ln(|k_n-k'_n||n|)-\tilde{\delta}|k_n-k_n'|^{\theta/2}|n|^{\theta/2}\right)}\\
&&+\nonumber\frac{2e^{3}}{\gamma} ||{R_0}||_{\rho,\mu}^+e^{\sum_{n>N:k_n\neq k'_n}\left(8\ln(|k_n-k'_n||n|)-\tilde{\delta}|k_n-k_n'|^{\theta/2}|n|^{\theta/2}\right)}\\
&&\nonumber \mbox{(where $N=\left(\frac{16}{\theta \tilde{\delta} }\right)^{4/\theta}$)}\\
&=&\nonumber\frac{2e^{3}}{\gamma} ||{R_0}||_{\rho,\mu}^+e^{\left(\frac{16}{\theta \tilde{\delta}}\right)^{4/\theta}\cdot \frac{32}{\theta}\ln\left(\frac{16}{\theta \tilde{\delta}}\right)}\qquad{\mbox{(in view of  (\ref{065}) below)}}\\
&&+\nonumber\frac{2e^{3}}{\gamma} ||{R_0}||_{\rho,\mu}^+\qquad\qquad\qquad\qquad\mbox{(in view of (\ref{066}) below)}\\
&\leq&\label{064}\frac{1}{\gamma}\cdot e^{C(\theta)\delta^{-\frac5\theta}} ||{R_0}||_{\rho,\mu}^+ \  \ \mbox {(for $0<\delta\ll1$)},
\end{eqnarray}
where $C(\theta)$ is a positive constant depending on $\theta$ only.\\
Therefore, in view of (\ref{050}) and (\ref{064}), we finish the proof of (\ref{061}) for $i=0$.

It is easy to verify the following two facts that
\begin{equation}\label{065}
\max_{x\geq 1} f(x)=f\left(\left(\frac{16}{\theta\delta}\right)^{2/\theta}\right)=-\frac {16}{\theta} +8\ln\left(\left(\frac{16}{\theta\delta}\right)^{2/\theta}\right)\leq \frac{16}{\theta}\ln\left(\frac{16}{\theta\delta}\right)
\end{equation}
with $f(x)=(-\delta x^{\theta/2}+8\ln x)$,
and when $|n|>N=\left(\frac{16}{\theta\delta}\right)^{4/\theta}, k_n\neq k'_n$, one has
\begin{equation}\label{066}
-\delta\left(|k_n-k_n'|^{\theta/2}|n|^{\theta/2}\right)+8\ln(|k_n-k'_n||n|)<0\ \  \mbox {(for $0<\delta\ll1$)}.
\end{equation}
Similarly, one can prove (\ref{061}) for $i=1$.
\end{proof}
\subsection{Proof of Lemma \ref{H3}}
\begin{proof}
Let
\begin{equation*}
H_1=\sum_{a,k,k'} b_{akk'}\mathcal{M}_{akk'}
\end{equation*}
and
\begin{equation*}
H_2=\sum_{A,K,K'} B_{AKK'}\mathcal{M}_{AKK'}.
\end{equation*}
It follows easily that
\begin{equation*}
\{H_1,H_2\}=\sum_{a,k,k',A,K,K'}b_{akk'}B_{AKK'}\{\mathcal{M}_{akk'},\mathcal{M}_{AKK'}\},
\end{equation*}
where
\begin{eqnarray*}
\{\mathcal{M}_{akk'},\mathcal{M}_{AKK'}\}
&=&\frac{1}{2\textbf{i}}\sum_j\left(\prod_{n\neq j}I_n(0)^{a_n+A_n}q_n^{k_n+K_n}\bar{q}_n^{k_n'+K_n'}\right)\\
&&\times\left((k_jK_j'-k_j'K_j)I_j(0)^{a_j+A_j}q_j^{k_j+K_j-1}\bar{q}_j^{k_j'+K_j'-1}\right).
\end{eqnarray*}
Then the coefficient of
\[
\mathcal{M}_{\alpha\kappa\kappa'} :=\prod_{n}I_n(0)^{\alpha_n}q_n^{\kappa_n}\bar{q}_n^{\kappa'_n}
\]
is given by
\begin{equation}\label{006*}
B_{\alpha\kappa\kappa'}= \frac{1}{2\textbf{i}}\sum_{j}\sum_{*}\sum_{**}(k_jK_j'-k_j'K_j)b_{akk'}B_{AKK'},
\end{equation}
where
\begin{equation*}
\sum_{*}=\sum_{a,A \atop a+A=\alpha},
\end{equation*}
and
\begin{equation*}
\sum_{**}=\sum_{k,k',K,K'\atop \mbox{when}\ n\neq j, k_n+K_n=\kappa_n,k_n'+K_n'=\kappa_n';\mbox{when}\ n=j, k_n+K_n-1=\kappa_n,k_n'+K_n'-1=\kappa_n'}.
\end{equation*}
In view of (\ref{H00}) and Lemma \ref{H1}, one has
\begin{eqnarray}
\label{007*} |b_{akk'}|
  &\leq& ||H_1||_{\rho-\delta_{1},\mu+2\delta_{1}}
e^{\rho\sum_{n}(2a_n+k_n+k_n')|n|^{\theta}-2\rho(n_1^*)^{\theta}-\mu m^*(k,k')^{\theta}}\\
&&\nonumber
\times
e^{-(2-2^{\theta})\delta_{1}
\sum_{i\geq 3}(n_i^*)^{\theta}-\delta_{1} m^*(k,k')^{\theta}},
\end{eqnarray}
and
\begin{eqnarray}\label{008*}
|B_{AKK'}|&\leq&||H_2||_{\rho-\delta_{2},\mu+2\delta_{2}}e^{\rho\sum_{n}(2A_n+K_n+K_n')|n|^{\theta}-2\rho(N_1^*)^{\theta}
-\mu m^*(K,K')^{\theta}}\\
&&\nonumber
\times
e^{-(2-2^{\theta})\delta_{2}\sum_{i\geq 3}(N_i^*)^{\theta}-\delta_{2} m^*(K,K')^{\theta}}.
\end{eqnarray}
Substitution of (\ref{007*}) and (\ref{008*}) in (\ref{006*}) gives
\begin{eqnarray*}
\label{009*}|B_{\alpha\kappa\kappa'}|&\leq& \frac{1}{2} ||H_1||_{\rho-\delta_{1},\mu+2\delta_{1}}||H_2||_{\rho-\delta_{2},\mu+2\delta_{2}}
\times\sum_{j}\sum_{*}\sum_{**}|k_jK_j'-k_j'K_j|\\
\nonumber &&\times e^{\rho\left(\sum_{n}(2(a_n+A_{n})+k_n +K_{n}+k_n'+K_{n}')|n|^{\theta}-2(n_1^*)^{\theta}-2(N_1^*)^{\theta}\right)-\mu (m^{\ast}(k,k')^{\theta}+ m^*(K,K')^{\theta})}\\
\nonumber &&\times e^{-(2-2^{\theta})\left(\delta_{1}\sum_{i\geq 3}(n_i^*)^{\theta}+\delta_{2}\sum_{i\geq 3}(N_i^*)^{\theta}\right)}e^{-\left(\delta_{1}m^*(k,k')^{\theta}+\delta_{2}m^*(K,K')^{\theta}\right)}.
\end{eqnarray*}
Noting that
\begin{equation}\label{012}
\sum_n(2\alpha_n+\kappa_n+\kappa_n')=\sum_n(2a_n+k_n+k_n')+\sum_n(2A_n+K_n+K_n')-2
\end{equation}
and
\begin{eqnarray}
\nonumber&&\sum_n(2\alpha_n+\kappa_n+\kappa_n')|n|^{\theta}\\
 \label{013}&=&\sum_n(2a_n+k_n+k_n')|n|^{\theta}
+\sum_n(2A_n+K_n+K_n')|n|^{\theta}-2|j|^{\theta}.
\end{eqnarray}
Then one has
\begin{eqnarray*}
\nonumber |B_{\alpha\kappa\kappa'}|
 &\leq& ||H_1||_{\rho-\delta_{1},\mu+2\delta_{1}}||H_2||_{\rho-\delta_{2},\mu+2\delta_{2}}
e^{\rho(\sum_{n}(2\alpha_n+\kappa_n+\kappa_n')|n|^{\theta}-2(\nu_1^*)^{\theta})-\mu m^{\ast}(\kappa,\kappa')^{\theta}}\\
\nonumber&& \times\frac{1}{2}\sum_{j}\sum_{*}\sum_{**}|k_jK_j'-k_j'K_j| e^{2\rho(|j|^{\theta}+(\nu_1^*)^{\theta}-(n_1^*)^{\theta}-(N_1^*)^{\theta})}\\
&&\nonumber\times e^{\mu \left(m^*(\kappa,\kappa')^{\theta}- m^*(k,k')^{\theta}-m^*(K,K')^{\theta}\right)}\\
\nonumber &&\times e^{-(2-2^{\theta})\left(\delta_{1}\sum_{i\geq 3}(n_i^*)^{\theta}+\delta_{2}\sum_{i\geq 3}(N_i^*)^{\theta}\right)}e^{-\left(\delta_{1}m^*(k,k')^{\theta}+\delta_{2}m^*(K,K')^{\theta}\right)},
\end{eqnarray*}
where
\begin{equation*}
\nu_1^*=\max\{|n|:\alpha_n+\kappa_n+\kappa_n'\neq0\}.
\end{equation*}
To show (\ref{H4}) holds, it suffices to prove
\begin{equation}\label{H5}
I\leq \frac{1}{\delta_{2}}\left(\frac{1}{\delta_{1}}\right)^{C({\theta}){\delta_{1}^{-\frac{1}{\theta}}}},
\end{equation}
where
\begin{eqnarray*}
I&=& \frac{1}{2}\sum_{j}\sum_{*}\sum_{**}|k_jK_j'-k_j'K_j|
e^{2\rho(|j|^{\theta}+(\nu_1^*)^{\theta}-(n_1^*)^{\theta}-(N_1^*)^{\theta})}\\
\nonumber&&\nonumber\times e^{\mu (m^*(\kappa,\kappa')- m^*(k,k')- m^*(K,K'))}\\
&&\times e^{-(2-2^{\theta})\left(\delta_{1}\sum_{i\geq 3}(n_i^*)^{\theta}+\delta_{2}\sum_{i\geq 3}(N_i^*)^{\theta}\right)}e^{-\left(\delta_{1}m^*(k,k')^{\theta}+\delta_{2}m^*(K,K')^{\theta}\right)}.
\end{eqnarray*}
To this end, we first note some simple facts:

$\textbf{1}.$ If $j\notin\ \mbox{supp}\ (k+k') \bigcap\ \mbox{supp}\ (K+K')$, then
\begin{equation*}
\frac{\partial\mathcal{M}_{akk'}}{\partial q_j}
\frac{\partial\mathcal{M}_{AKK'}}{\partial \bar{q}_j}-\frac{\partial\mathcal{M}_{akk'}}{\partial \bar{q}_j}
\frac{\partial\mathcal{M}_{AKK'}}{\partial {q}_j}=0.
\end{equation*}
Hence we always assume $j\in\ \mbox{supp}\ (k+k') \bigcap\ \mbox{supp}\ (K+K')$. Therefore one has
\begin{equation*}
|j|\leq \min\{n_1^*,N^*_1\}.
\end{equation*}

$\textbf{2}.$ The following inequality always holds
\begin{equation}\label{014}
\nu_1^*\leq \max\{n_1^*,N_1^*\},
\end{equation}
and then one has
\begin{equation*}
|j|^{\theta}+(\nu_1^*)^{\theta}-(n_1^*)^{\theta}-(N_1^*)^{\theta}\leq 0.
\end{equation*}

$\textbf{3}.$ It is easy to see
\begin{eqnarray}
\nonumber\sum_{i\geq 1}(n_i^*)^{\theta}
\nonumber&=&\sum_n(2a_n+k_n+k_n')|n|^{\theta}\\
\nonumber&\geq&\sum_n(2a_n+k_n+k_n')\\
\label{015}&\geq&\sum_n(k_n+k_n')
\end{eqnarray}
and
\begin{eqnarray}
\nonumber\sum_{i\geq 3}(N_i^*)^{\theta}
\nonumber&\geq&\sum_n(2A_n+K_n+K_n')-2\\
\nonumber&\geq&\frac12\sum_n(2A_n+K_n+K_n')\\
\label{016}&\geq&\frac12\sum_{n}(K_n+K_n').
\end{eqnarray}
Based on (\ref{015}) and (\ref{016}), we obtain
\begin{eqnarray}
\sum_{n}(k_n+k_n')(K_n+K_n')\nonumber
&\leq& \left(\sup_{n}(K_n+K_n')\right)\left(\sum_{n}(k_n+k_n')\right)\\
&\leq&\label{017} 2\left(\sum_{i\geq 1}(n_i^*)^{\theta}\right)\left(\sum_{i\geq 3}(N_i^*)^{\theta}\right).
\end{eqnarray}
In view of (\ref{012}) and (\ref{016}),\ we have
\begin{eqnarray}\label{009**}
\sum_n(2a_n+\kappa_n+\kappa_n') \leq 2\left(\sum_{i\geq 1}(n_i^*)^{\theta}\right) +2\left(\sum_{i\geq 3}(N_i^*)^{\theta}\right)
\end{eqnarray}
$\textbf{4}.$ It is easy to see
\begin{equation*}
m(\kappa,\kappa')=m(k,k')+m(K,K').
\end{equation*}
Hence,
\begin{equation*}
m^*(\kappa,\kappa')\leq m^*(k,k')+m^*(K,K').
\end{equation*}
Moreover, one has
\begin{equation*}
m^*(\kappa,\kappa')^{\theta}\leq m^*(k,k')^{\theta}+m^*(K,K')^{\theta}.
\end{equation*}
which implies
\begin{equation}\label{018}
e^{\mu (m^*(\kappa,\kappa')^{\theta}- m^*(k,k')^{\theta}- m^*(K,K')^{\theta})}\leq 1.
\end{equation}
Now we will prove the inequality (\ref{H5}) holds:

$\textbf{Case. 1.} \ \nu_1^*\leq N_1^*$.

$\textbf{Case. 1.1.}\ |j|\leq n_3^*.$

Then one has
\begin{eqnarray}\label{019}
e^{2\rho(|j|^{\theta}-(n_1^*)^{\theta})}
&\leq& e^{(2-2^{\theta})\delta_{1}((n_3^*)^{\theta}-(n_1^*)^{\theta})},
\end{eqnarray}
if
\begin{eqnarray}\label{020}
\delta_{1}\leq \frac{2\rho}{2-2^{\theta}}.
\end{eqnarray}
Hence one obtains
\begin{eqnarray}
\nonumber &&e^{2\rho(|j|^{\theta}+(\nu_1^*)^{\theta}-(n_1^*)^{\theta}-(N_1^*)^{\theta})}
e^{-(2-2^{\theta})\delta_{1}\sum_{i\geq 3}(n_i^*)^{\theta}}\\
&\leq&\nonumber e^{(2-2^{\theta})\delta_{1}((n_3^*)^{\theta}-(n_1^*)^{\theta})}
e^{-(2-2^{\theta})\delta_{1}\sum_{i\geq 3}(n_i^*)^{\theta}}\\
&&\nonumber \mbox{(in view of $\nu_1^*\leq N_1^*$ and (\ref{019}))}\\
&=&\nonumber e^{-(2-2^{\theta})\delta_{1}((n_1^*)^{\theta}+\sum_{i\geq 4}(n_i^*)^{\theta})}\\
\label{021}&\leq& e^{-\frac{(2-2^{\theta})\delta_{1}}3\sum_{i\geq 1}(n_i^*)^{\theta}}.
\end{eqnarray}
\begin{rem}\label{022}
Note that if $j,a,k,k'$ are specified, and then $A,K,K'$ are uniquely determined.
\end{rem}
In view of (\ref{018}) and (\ref{021}), we have
\begin{eqnarray*}
I&\leq&\frac{1}{2}\sum_{j}\sum_{*}\sum_{**}(k_j+k_j')(K_j+K_j')
e^{-\frac{(2-2^{\theta})\delta_{1}}3\sum_{i\geq 1}(n_i^*)^{\theta}}e^{-(2-2^{\theta})\delta_{2}\sum_{i\geq 3}(N_i^*)^{\theta}}\\
&&\times e^{-\left(\delta_{1}m^*(k,k')^{\theta}+\delta_{2}m^*(K,K')^{\theta}\right)}\\
&\leq&\frac{1}{2}\sum_{a,k,k'}\sum_{j}(k_j+k_j')(K_j+K_j')
e^{-\frac{(2-2^{\theta})\delta_{1}}3\sum_{i\geq 1}(n_i^*)^{\theta}}e^{-(2-2^{\theta})\delta_{2}\sum_{i\geq 3}(N_i^*)^{\theta}}\\
&&\mbox{(in view of Remark \ref{022}, one has $\sum_{a,k,k'}\sum_{j}=\sum_{j}\sum_{*}\sum_{**}$)}\\
&\leq&\sum_{a,k,k'}\left(\sum_{i\geq 1}(n_i^*)^{\theta}\right)\left(\sum_{i\geq 3}(N_i^*)^{\theta}\right)e^{-\frac{(2-2^{\theta})\delta_{1}}3\sum_{i\geq 1}(n_i^*)^{\theta}}e^{-(2-2^{\theta})\delta_{2}\sum_{i\geq 3}(N_i^*)^{\theta}}\\
&&\mbox{(in view of the inequality (\ref{017}))}\\
&\leq&\sum_{a,k,k'}\left(\sum_{i\geq 1}(n_i^*)^{\theta}e^{-\frac{(2-2^{\theta})\delta_{1}}3\sum_{i\geq 1}(n_i^*)^{\theta}}\right)\left(\sum_{i\geq 3}(N_i^*)^{\theta}e^{-(2-2^{\theta})\delta_{2}\sum_{i\geq 3}(N_i^*)^{\theta}}\right)\\
\nonumber &\leq&\frac{12}{(2-2^{\theta})^2\delta_{1}\delta_{2}}
\sum_{a,k,k'}
e^{-\frac{(2-2^{\theta})\delta_{1}}4\sum_{i\geq 1}(n_i^*)^{\theta}}\qquad \mbox{(in view of Lemma 7.5 in \cite{CLSY})}\\
&=&\frac{12}{(2-2^{\theta})^2\delta_{1}\delta_{2}}\sum_{a,k,k'}
e^{-\frac{(2-2^{\theta})\delta_{1}}4\sum_{n}(2a_n+k_n+k'_n)|n|^{\theta}}\\
&\leq&\frac{12}{(2-2^{\theta})^2\delta_{1}\delta_{2}}\left(\sum_{a}
e^{-\frac{(2-2^{\theta})\delta_{1}}4\sum_{n}2a_n|n|^{\theta}}\right)
\left(\sum_{k}e^{-\frac{(2-2^{\theta})\delta_{1}}4\sum_{n}k_n|n|^{\theta}}\right)^2
\\
&\leq&\frac{12}{(2-2^{\theta})^2\delta_{1}\delta_{2}}\prod_{n\in\mathbb{Z}}
\left({1-e^{-\frac{(2-2^{\theta})\delta_1}2|n|^{\theta}}}\right)^{-1}
\left({1-e^{-\frac{(2-2^{\theta})\delta_{1}}4|n|^{\theta}}}\right)^{-2}      \\
&\leq&\frac{C_1(\theta)}{\delta_{1}\delta_{2}}\left(\frac{1}{\delta_{1}}\right)^{C_2{(\theta)}{\delta_{1}^{-\frac{1}{\theta}}}}\ \ \mbox{(in view of Lemma 7.4 in \cite{CLSY})}\\
&\leq&\frac{1}{\delta_{2}}\left(\frac{1}{\delta_{1}}\right)^{C{(\theta)}{\delta_{1}^{-\frac{1}{\theta}}}},
\end{eqnarray*}
where the last inequality is based on $0<\delta_{1},\delta_{2}\ll1$ and $C(\theta), C_1(\theta), C_2(\theta)$ are positive constants depending on $\theta$ only.

$\textbf{Case. 1.2.}\ j\in\{n_1,n_2\},\ |n_1|=n_1^*,\ |n_2|=n_2^*.$

If $2a_j+k_j+k'_j>2$, then $|j|\leq n_3^*$, we are in $\textbf{Case. 1.1.}$. Hence in what follows, we always assume
\begin{equation*}
2a_j+k_j+k'_j\leq2,
\end{equation*}
which implies
\begin{equation}\label{023}
k_j+k_j'\leq 2
\end{equation}
and
\begin{equation}\label{024}
n_2^*>n_3^*.
\end{equation}
From (\ref{023}) and in view of $j\in\{n_1,n_2\}$, it follows that
\begin{eqnarray*}
I\nonumber
\nonumber&\leq&\sum_{a,k,k'}(K_{n_1}+K'_{n_1}+K_{n_2}+K'_{n_2}) \\
\label{025*}&&\times e^{-(2-2^{\theta})\left(\delta_{1}\sum_{i\geq 3}(n_i^*)^{\theta}+\delta_{2}\sum_{i\geq 3}(N_i^*)^{\theta}\right)-\delta_{1}m^*(k,k')^{\theta}}.
\end{eqnarray*}
Since
\begin{equation*}
 K_j+K'_j\leq \kappa_j+\kappa'_j-k_j-k'_j+2\leq\kappa_j+\kappa'_j+2, \forall j,
\end{equation*}
one has
\begin{eqnarray}
\nonumber I\nonumber&\leq& \sum_{a,k,k'}(\kappa_{n_1}+\kappa'_{n_1}+\kappa_{n_2}+\kappa'_{n_2}+4) \\
\nonumber&&\times e^{-(2-2^{\theta})\left(\delta_{1}\sum_{i\geq 3}(n_i^*)^{\theta}+\delta_{2}\sum_{i\geq 3}(N_i^*)^{\theta}\right)-\delta_{1}m^*(k,k')^{\theta}}\\
 &\leq& \nonumber \sum_{a,k,k'}(\kappa_{n_1}+\kappa'_{n_1}+\kappa_{n_2}+\kappa'_{n_2}+4) \\
\nonumber&&\times e^{-\frac12(2-2^{\theta})\delta_{1}\sum_{i\geq 3}(n_i^*)^{\theta}-\delta_{1} m^*(k,k')^{\theta}}\  \mbox{ ( based  on  (\ref{009*}) )}\\
&&\nonumber \times e^{-\frac14(2-2^\theta)\delta_{1}\wedge\delta_{2}\sum_{n}(2\alpha_n+\kappa_n+\kappa'_n)}\\
\label{025} &=& \sum_{l\in\mathbb{Z}}\sum_{a,k,k',\atop{m(k,k')=l}}(\kappa_{n_1}+\kappa'_{n_1}+\kappa_{n_2}+\kappa'_{n_2}+4) \\
\nonumber&&\times e^{-\frac12(2-2^{\theta})\delta_{1}\sum_{i\geq 3}(n_i^*)^{\theta}-\delta_{1} |l|^{\theta} }\\
&&\nonumber \times e^{-\frac14(2-2^\theta)\delta_{1}\wedge\delta_{2}\sum_{n}(2\alpha_n+\kappa_n+\kappa'_n)},
\end{eqnarray}
where $\delta_{1}\wedge\delta_{2}=\min\{\delta_{1},\delta_{2}\} $.
\begin{rem}\label{026} Obviously, $\{n_1,n_2\}\bigcap \mathrm{supp}\ \mathcal{M}_{\alpha\kappa\kappa'}\neq \emptyset$, and if $n_1$ (resp. $n_2$), $\{n_i\}_{i\geq 3}$ and $m(k,k')=l$  is specified, then $n_2$ (resp. $n_1$) is determined uniquely. Thus $n_1,n_2$ range in a set of cardinality no more than \begin{equation}\label{027}\#\mathrm{supp} \ \mathcal{M}_{\alpha\kappa\kappa'}\leq\sum_{n}(2\alpha_n+\kappa_n+\kappa'_n).
\end{equation}
\end{rem}
Also, if $\{n_i\}_{i\geq 1}$ is given, then $\{2a_n+k_n+k'_n\}_{n\in\mathbb{Z}}$ is specified, and hence $(a,k,k')$ is specified up to a factor of
$$\prod_{n}(1+l_n^2),$$
where
$$l_n=\#\{j:n_j=n\}.$$
Following the inequality (\ref{025}), we thus obtain
\begin{eqnarray}
\nonumber I&\leq& \sum_{l\in\mathbb{Z}}\sum_{\{n_i\}_{i\geq1}}\prod_{m}(1+l_m^2)(\kappa_{n_1}+\kappa'_{n_1}+\kappa_{n_2}+\kappa'_{n_2}+4) \\
\nonumber&&\times e^{-\frac12(2-2^{\theta})\delta_{1}\sum_{i\geq 3}(n_i^*)^{\theta}-\delta_{1} |l|^{\theta}}\\
&&\nonumber\times e^{-\frac14(2-2^\theta)\delta_{1}\wedge\delta_{2}\sum_{n}(2\alpha_n+\kappa_n+\kappa'_n)}\\
\nonumber&\leq& 5\sum_{l\in\mathbb{Z}}\sum_{\{n_i\}_{i\geq3}}\prod_{|m|\leq n_3^*}(1+l_m^2)\left(\sum_{n_1,n_2}(\kappa_{n_1}+\kappa'_{n_1}+\kappa_{n_2}+\kappa'_{n_2}+4)\right) \\
\nonumber&& \times e^{-\frac12(2-2^{\theta})\delta_{1}\sum_{i\geq 3}(n_i^*)^{\theta}
-\delta_{1} |l|^{\theta}}\\
&&\nonumber\times e^{-\frac14(2-2^\theta)\delta_{1}\wedge\delta_{2}\sum_{n}(2\alpha_n+\kappa_n+\kappa'_n)}\\
&&\nonumber\mbox{(in view of $\prod_{|m|>n^*_1}(1+l_m^2)=1$ and $\prod_{m\in\{n_1,n_2\}}(1+l_m^2)\leq 5$)}\\
\nonumber&\leq& 5\sum_{l\in\mathbb{Z}}\sum_{\{n_i\}_{i\geq3}}\prod_{|m|\leq n_3^*}(1+l_m^2)\left(\sum_n(\kappa_n+\kappa_n')+\sum_{n_1,n_2}4\right) \\
\nonumber&& \times e^{-\frac12(2-2^{\theta})\delta_{1}\sum_{i\geq 3}(n_i^*)^{\theta}-\delta_{1} |l|^{\theta}}\\
&&\nonumber\times e^{-\frac14(2-2^\theta)\delta_{1}\wedge\delta_{2}\sum_{n}(2\alpha_n+\kappa_n+\kappa'_n)}\\
\nonumber& \leq & 5\sum_{l\in\mathbb{Z}}\sum_{\{n_i\}_{i\geq3}}\prod_{|m|\leq n_3^*}(1+l_m^2)\left(\sum_{n}(\kappa_{n}+\kappa'_{n})+4\#\mathrm{supp}\ \mathcal{M}_{\alpha\kappa\kappa'}\right) \\
\nonumber&& \times e^{-\frac12(2-2^{\theta})\delta_{1}\sum_{i\geq 3}(n_i^*)^{\theta}-\delta_{1} |l|^{\theta}}\\
&&\nonumber\times e^{-\frac14(2-2^\theta)\delta_{1}\wedge\delta_{2}\sum_{n}(2\alpha_n+\kappa_n+\kappa'_n)}\\
&&\nonumber\mbox{(the inequality is based on Remark \ref{026})}\\
\nonumber&\leq& \frac{C_3({\theta})}{\delta_{1}\wedge\delta_{2}}\left(\sum_{l\in\mathbb{Z}}e^{-\delta_{1} |l|^{\theta}}\right)\left(\sum_{\{n_i\}_{i\geq3}}\prod_{|m|\leq n_3^*}(1+l_m^2)e^{-\frac12(2-2^{\theta})\delta_{1}\sum_{i\geq 3}(n_i^*)^{\theta}}\right)\\
&&\mbox{ (based on (\ref{027}) and Lemma 7.5 in \cite{CLSY})}\nonumber\\
\nonumber&\leq&\frac{C_3({\theta})}{\delta_{1}\wedge\delta_{2}}\left(\sum_{l\in\mathbb{Z}}e^{-\delta_{1} |l|^{\theta}}\right)\left(\sum_{\{l_m\}_{|m|\leq n_3^{\ast}}}
e^{-\frac13(2-2^{\theta})\delta_{1}\sum_{|m|\leq n_3^\ast}l_m|m|^{\theta}}\right)
\\
\nonumber&&\times\sup_{\{l_m\}_{|m|\leq n_3^\ast}}\left(\prod_{|m|\leq n_3^\ast}(1+l_m^2)
e^{-\frac16(2-2^{\theta})\delta_{1}\sum_{|m|\leq n_3^\ast}l_m|m|^{\theta}}\right)\\
\nonumber&\leq&\frac{C_3({\theta})}{\delta_{1}\wedge\delta_{2}}
\left(\frac{1}{\delta_{1}}\right)^{C_4({\theta}){\delta_{1}^{-\frac{1}{\theta}}}}
\left(\sum_{l\in\mathbb{Z}}e^{-\delta_{1} |l|^{\theta}}\right)\left(\sum_{\{l_m\}_{|m|\leq n_3^\ast}}
e^{-\frac13(2-2^{\theta})\delta_{1}\sum_{|m|\leq n_3^\ast}l_m|m|^{\theta}}\right)\\
\nonumber&&\mbox{(in view of Lemma 7.7 in \cite{CLSY})}\\
\nonumber& \leq & \frac{C_3({\theta})}{\delta_{1}\wedge\delta_{2}}\left(\frac{1}{\delta_{1}}\right)^{C_4({\theta}){\delta_{1}^{-\frac{1}{\theta}}}}\prod_{m\in\mathbb{Z}}\frac{1}{1-e^{-\frac13(2-2^{\theta})\delta_{1} |m|^{\theta}}}\left(\frac{1}{\delta_{1}}\right)^{\frac{1}{\theta}}\quad\\
&&\nonumber \mbox{(in view of Lemma 7.2 and Lemma 7.3 in \cite{CLSY})}\\
\nonumber&\leq&\frac{1}{\delta_{2}}\left(\frac{1}{\delta_{1}}\right)^{C_{5}({\theta}){\delta_{1}^{-\frac{1}{\theta}}}},
\end{eqnarray}
where the last equality is based on Lemma 7.4 in \cite{CLSY} and $C_{3}(\theta), C_{4}(\theta), C_{5}(\theta)$ are positive constants depending on $\theta$ only.

$\textbf{Case. 2.}\ \nu_1^*>N_1^*.$

In view of (\ref{014}), one has $n_1^*=\nu_1^*$. Hence,  $n_2$ is determined by $n_1$, $\{n_i\}_{i\geq 3}$ and the momentum $m(k,k')$. Similar to Case 1.2, we have
\begin{eqnarray*}
I&\leq&\frac{1}{\delta_{2}}\left(\frac{1}{\delta_{1}}\right)^{C_{6}({\theta}){\delta_{1}^{-\frac{1}{\theta}}}},
\end{eqnarray*}
where $C_{6}(\theta)$ is some positive constant depending on $\theta$ only.

Therefore,\ we finish the proof of (\ref{H4}).
\end{proof}
\subsection{Proof of Lemma \ref{H6}}
\begin{proof}
In view of (\ref{028}) and for each $ j \in \mathbb{Z}$,\ one has
\begin{equation*}
\frac{\partial{H}}{\partial q_{j}}=\sum_{a,k,k'}B_{akk'}\left(\prod_{n\neq j}I_{n}(0)^{a_{n}}q_{n}^{k_{n}}
\bar{q}_{n}^{k_{n}'}\right)\left(k_{j}
I_{j}(0)^{a_{j}}q_{j}^{k_{j}-1}\bar{q}_{j}^{k_{j}'}\right).
\end{equation*}
Now we would like to estimate
\begin{equation}\label{031}
\left|e^{r|j|^{\theta}}\frac{\partial{H}}{\partial q_{j}}\right|=\left|e^{r|j|^{\theta}}\sum_{a,k,k'}B_{akk'}\left(\prod_{n\neq j}I_{n}(0)^{a_{n}}q_{n}^{k_{n}}
\bar{q}_{n}^{k_{n}'}\right)\left(k_{j}
I_{j}(0)^{a_{j}}q_{j}^{k_{j}-1}
\bar{q}_{j}^{k_{j}'}\right)\right|.
\end{equation}
Based on (\ref{H00}), one has
\begin{equation}\label{032}
|B_{akk'}|\leq ||H||_{\rho,\mu}e^{\rho(\sum_{n}(2a_{n}+k_{n}+k_{n}')|n|^{\theta}
-2(n_1^\ast)^{\theta})-\mu m^\ast(k,k')^{\theta}}.
\end{equation}
In view of $||q||_{r,\infty}<1$ and $ ||I(0)||_{r,\infty} < 1$, one has
\begin{eqnarray}\label{033}
|q_{n}| < e^{-r|n|^{\theta}},
\end{eqnarray}
and
\begin{eqnarray}\label{034}
|I_{n}(0)| < e^{-2r|n|^{\theta}}.
\end{eqnarray}
Substituting (\ref{033}) and (\ref{034}) into (\ref{032}),\ one has
\begin{eqnarray}
&&|(\ref{032})|\nonumber\\
&\leq&\nonumber||H||_{\rho,\mu}\left|e^{r|j|^{\theta}}\sum_{l\in\mathbb{Z}}\sum_{a,k,k',\atop{m(k,k')=l}}
k_je^{\rho(\underset{n}{\sum}(2a_n+k_n+k_n')|n|^{\theta}-2(n_1^*)^{\theta})}
e^{-r(\underset{n}{\sum}(2a_n+k_n+k_n')|n|^{\theta}-|j|^{\theta})-\mu|l|^{\theta}}\right|\\
&=&\nonumber||H||_{\rho,\mu}\left|\sum_{l\in\mathbb{Z}}\sum_{a,k,k',\atop{m(k,k')=l}}
k_je^{\rho(\underset{n}{\sum}(2a_n+k_n+k_n')|n|^{\theta}-2(n_1^*)^{\theta})}
e^{-r(\underset{n}{\sum}(2a_n+k_n+k_n')|n|^{\theta}-2|j|^{\theta})-\mu|l|^{\theta}}\right|.
\end{eqnarray}
Now we will estimate the last inequality in the following two cases:

\textbf{Case 1.} $|j|\leq n_3^*$.

Then one has
\begin{eqnarray*}
&&\left|\sum_{l\in\mathbb{Z}}\sum_{a,k,k',\atop{m(k,k')=l}}k_j
e^{\rho(\underset{n}{\sum}(2a_n+k_n+k_n')|n|^{\theta}-2(n_1^*)^{\theta})}
e^{-r(\underset{n}{\sum}(2a_n+k_n+k_n')|n|^{\theta}-2|j|^{\theta})-\mu|l|^{\theta}}\right|\\
&\leq& \left|\sum_{a,k,k'}k_je^{\rho\sum_{i\geq 1}(n_i^*)^{\theta}}e^{-r(n_1^*)^{\theta}-r\sum_{i\geq 4}(n_i^*)^{\theta}}\right|\left( \sum_{l\in \mathbb{Z}}e^{-\mu|l|^{\theta}}\right)\quad \mbox{(in view of $|j|\leq n_3^*$)}\\
\nonumber&\leq&\sum_{a,k,k'}\left(\sum_{i\geq 1}(n_i^*)^{\theta}\right)e^{\frac13(-r+3\rho)\sum_{i\geq 1}(n_i^*)^{\theta}}\left( \sum_{l\in \mathbb{Z}}e^{-\mu|l|^{\theta}}\right)\\
\nonumber&\leq& \left( \frac{1}{\mu}\right)^{\frac{1}{\theta}}\left(\frac{12}{r-3\rho}\right)\left(\sum_{a,k,k'}
e^{\frac14(-r+3\rho)\sum_{i\geq 1}(n_i^*)^{\theta}}\right)\\
&&\nonumber\quad\mbox{(in view of  Lemma 7.5 in \cite{CLSY})}\\
\nonumber&=& \left( \frac{1}{\mu}\right)^{\frac{1}{\theta}}\left(\frac{12}{r-3\rho}\right) \sum_{a,k,k'}
e^{\frac14(-r+3\rho)\underset{n}{\sum}(2a_n+k_n+k'_n)|n|^{\theta}}\\
\nonumber&\leq&\left( \frac{1}{\mu}\right)^{\frac{1}{\theta}}\left(\frac{12}{r-3\rho}\right)\left(\sum_{a}
e^{\frac14(-r+3\rho)\sum_{n}2a_n|n|^{\theta}}\right)\times\left(\sum_{k}e^{\frac14(-r+3\rho)\underset{n}{\sum}k_n|n|^{\theta}}\right)^2\\
\nonumber&\leq&\left( \frac{1}{\mu}\right)^{\frac{1}{\theta}}\left(\frac{12}{r-3\rho}\right)\prod_{n\in\mathbb{Z}}
\left({1-e^{\frac12(-r+3\rho)|n|^{\theta}}}\right)^{-1}\times\prod_{n\in\mathbb{Z}}
\left({1-e^{\frac14(-r+3\rho)|n|^{\theta}}}\right)^{-2}\\
&&\nonumber\mbox{(in view of Lemma 7.2 in \cite{CLSY})}\\
\nonumber&\leq& \left( \frac{1}{\mu}\right)^{\frac{1}{\theta}}\left(\frac{12(2-2^{\theta})}{\rho}\right)\prod_{n\in\mathbb{Z}}
\left({1-e^{-\frac{1}{2}\rho|n|^{\theta}}}\right)^{-1}
\nonumber\prod_{n\in\mathbb{Z}}
\left({1-e^{-\frac{1}{4}\rho|n|^{\theta}}}\right)^{-2}\\
&&\nonumber  \ {\mbox{(in view of $r>(\frac{1}{2-2^{\theta}}+3)\rho$)}}    \\
&\leq&\nonumber  \left( \frac{1}{\mu}\right)^{\frac{1}{\theta}} \left(\frac{12(2-2^{\theta})}{\rho}\right)\left(\frac{1}{\rho}\right)^{C_1{(\theta)}{\rho^{-\frac{1}{\theta}}}},
\end{eqnarray*}
where the last inequality is based on Lemma 7.4 in \cite{CLSY} and $C_1(\theta)$ is a positive constant depending on $\theta$ only.

\textbf{Case 2.} $|j|> n_3^*$, which implies $k_j\leq 2$.

Then one has
\begin{eqnarray}
\nonumber&&\left|\sum_{l\in\mathbb{Z}}\sum_{a,k,k',\atop{m(k,k')=l}}
k_je^{\rho(\underset{n}{\sum}(2a_n+k_n+k_n')|n|^{\theta}-2(n_1^*)^{\theta})}
e^{-r(\underset{n}{\sum}(2a_n+k_n+k_n')|n|^{\theta}-2|j|^{\theta})}e^{-\mu|l|^{\theta}}\right|\\
\nonumber&\leq&2 \left|\sum_{l\in\mathbb{Z}}\sum_{a,k,k',\atop{m(k,k')=l}}e^{\rho\sum_{i\geq 3}(n_i^*)^{\theta}}e^{-(2-2^{\theta})r\sum_{i\geq 3}(n_i^*)^{\theta}}e^{-(\mu-r)|l|^{\theta}}\right|\quad{(\mbox{in view of (\ref{H2}) and $k_j\leq2$})}\\
\nonumber&=& 2\left|\sum_{l\in\mathbb{Z}}\sum_{a,k,k',\atop{m(k,k')=l}}e^{(-(2-2^{\theta})r+\rho)\sum_{i\geq 3}(n_i^*)^{\theta}}e^{-(\mu-r)|l|^{\theta}}\right|\\
\nonumber &:=& A.
\end{eqnarray}
If $\{n_i\}_{i\geq 1}$ is given, then $\{2a_n+k_n+k'_n\}_{n\in\mathbb{Z}}$ is specified, and hence $(a,k,k')$ is specified up to a factor of
$$\prod_{n}(1+l_n^2),$$
where
$$l_n=\#\{j:n_j=n\}.$$
Since $|j|>n_3^*$, then $j\in\{n_1,n_2\}$. Hence, if $(n_i)_{i\geq 3}$ and $j,m^{\ast}(k,k')$ are given, then $n_1$ and $n_2$ are uniquely determined. Then, one has
\begin{eqnarray}
\nonumber A &\leq&  2\left|\sum_{l\in\mathbb{Z}}\sum_{(n_i)_{i\geq3}}\prod_{|n|\leq n_1^*}(1+l_n^2)e^{(-(2-2^{\theta})r+\rho)\sum_{i\geq 3}(n_i^*)^{\theta}}e^{-(\mu-r)|l|^{\theta}}\right| \\
\nonumber&\leq&10\left|\sum_{l\in\mathbb{Z}}\sum_{(n_i)_{i\geq3}}\prod_{|n|\leq n_3^*}(1+l_n^2)e^{(-(2-2^{\theta})r+\rho)\sum_{i\geq 3}(n_i^*)^{\theta}}e^{-(\mu-r)|l|^{\theta}}\right| \\
\nonumber && \mbox{ ( in view of $ \prod_{n \in \{n_1, n_{2}\}}(1+l_n^2) \leq 5 )$ }\\
\nonumber&\leq&10\left(\sum_{(n^*_i)_{i\geq3}}e^{-2(2-2^{\theta})\rho\sum_{i\geq 3}(n_i^*)^{\theta}}\right)\sup_{(n^\ast_i)_{i\geq3}}\left(\prod_{|n|\leq n_3^*}(1+l_n^2)e^{-(2-2^{\theta})\rho\sum_{i\geq 3}(n_i^*)^{\theta}}\right)\\
\nonumber && \times\left(\sum_{l\in\mathbb{Z}}e^{-(\mu-r)|l|^{\theta}}\right)\  \mbox{ ( in view of $ r > (\frac{1}{2-2^{\theta}} +3)\rho $ ) }\\
\nonumber&\leq&10\left(\frac{1}{\mu-r}\right)^{\frac{1}{\theta}}\left(\frac{1}{\rho}\right)^{C_2{(\theta)}
{\rho^{-\frac{1}{\theta}}}}\left(\sum_{(n^*_i)_{i\geq3}}e^{-2(2-2^{\theta})\rho\sum_{i\geq 3}(n_i^*)^{\theta}}\right)\\
\nonumber &&\mbox{( in view of Lemma 7.3 and Lemma 7.7 in \cite{CLSY})}\\
\nonumber&=&10\left(\frac{1}{\mu-r}\right)^{\frac{1}{\theta}}\left(\frac{1}{\rho}\right)^{C_2{(\theta)}
{\rho^{-\frac{1}{\theta}}}}\left(\sum_{(l_n)_{|n|\leq n_3^*}}e^{-2(2-2^{\theta})\rho\sum_{|n|\leq n_3^*}l_n|n|^{\theta}}\right)\\
\nonumber&\leq&C_1(\theta)\left(\frac{1}{\mu-r}\right)^{\frac{1}{\theta}}
\left(\frac{1}{\rho}\right)^{C_3{(\theta)}{\rho^{-\frac{1}{\theta}}}},
\end{eqnarray}
where the last equality is based on Lemma 7.2 and Lemma 7.4 in \cite{CLSY},\ and $ C_1(\theta), C_2(\theta), C_{3}(\theta)$ are positive constants depending on $\theta$ only.\\
Hence, we finished the proof of (\ref{029}).
\end{proof}

\bibliographystyle{abbrv} 

\end{document}